\DeclareFontFamily{OT1}{rsfs}{}
\DeclareFontShape{OT1}{rsfs}{n}{it}{<-> rsfs10}{}
\DeclareMathAlphabet{\curly}{OT1}{rsfs}{n}{it}
\newcommand{\eqnum}{\refstepcounter{equation}\textup{\tagform@{\theequation}}}
\makeatletter \@addtoreset{equation}{section} \makeatother
\renewcommand{\theequation}{\thesection.\arabic{equation}}
\numberwithin{equation}{subsection}
\newtheorem{thm}[equation]{Theorem}
\theoremstyle{definition}
\newtheorem{dfn}[equation]{Definition}
\newtheorem{lem}[equation]{Lemma}
\newtheorem{cor}[equation]{Corollary}
\newtheorem{prop}[equation]{Proposition}
\newtheorem{rmk}[equation]{Remark}
\newtheorem{ex}[equation]{Example}
\newcommand\rurl[1]{
	\href{http://#1}{\nolinkurl{#1}}
}
\newtheorem{claim}[equation]{Claim}
\newcommand{\E}{\mathcal{E}}
\newcommand{\coker}{\mathrm{coker}}
\newcommand{\EE}{\curly{E}}
\newcommand{\tr}{\mathrm{tr}}
\newcommand{\rk}{\mathrm{rank}}
\newcommand{\T}{\mathrm{T}}
\newcommand{\id}{\mathrm{id}}
\newcommand{\FF}{\curly{F}}
\newcommand{\LLL}{\curly{L}}
\newcommand{\OO}{\mathcal{O}}
\newcommand{\End}{\mathrm{End}}
\newcommand{\Hom}{\mathrm{Hom}}
\newcommand{\Homm}{\mathcal{H}\! \mathit{om}}
\newcommand{\RHomm}{\mathbf{R}\mathcal{H}om}
\newcommand{\Extt}{\mathcal{E} \! \mathit{xt}}
\newcommand{\Ext}{\mathrm{Ext}}
\newcommand{\ext}{\mathrm{ext}}
\newcommand{\vd}{\mathrm{vd}}
\newcommand{\pot}{\mathbf{R}\Homm_{p_X}(\EE,\EE) }
\newcommand{\Y}{\mathrm{Y}}
\newcommand{\At}{\mathrm{At}}
\newcommand{\G}{\mathrm{G}}
\newcommand{\X}{\mathrm{X}}
\newcommand{\A}{\mathrm{A}}
\newcommand{\N}{\mathcal{N}}
\newcommand{\M}{\mathcal{M}}
\begin{document}
		\title{A perfect obstruction theory for $\mathbf{SU}(2)$-Higgs sheaves}
	\author{Simon Schirren\\
		}
	\maketitle
	\tableofcontents

	\textbf{Abstract.} 
    We present a new method for constructing virtual cycles for rank-2 Higgs sheaves $(E,\phi)$ on a smooth projective surface $S$. Using this, we redefine the $\mathbf{SU}(2)$-perfect obstruction theory previously constructed by Tanaka-Thomas. The key step in our construction involves modifying the $\mathbf{C}^\times$-localisation formula of Graber-Pandharipande by replacing the torus action with an involution $(E,\phi) \mapsto (E^*,-\phi^*)$. 
		\subsection{Summary}
		The first part of the introduction is aimed at non-experts and explains the spectral construction and the idea of virtual cycles, while the second part gives an outlook of this article and provides some background. For the more experienced reader, we recommend to jump directly to \ref{VWinvariants} and read the more condensed summary of Higgs pairs and their spectral sheaves in the following Sec. \ref{preliminaries}. 
		\subsection{Spectral sheaves}
		Let $E$ be a finite dimensional vector space over $\mathbf{C}$ and $\phi \in \End(E)$.\\
		We make the $\mathbf{C}$-module $E$ into a $\mathbf{C}[t]$-module by defining $t.e:= \phi(e)$, which is indeed a commutative action, as $\phi$ commutes with itself and thus defines a sheaf $\E_\phi$ on $\mathbf{A}^1_\mathbf{C}:= \mathbf{Spec}(\mathbf{C}[t])$. \\
		We see that $\E_\phi$ is a torsion sheaf, whose scheme-theoretic support is given by  the zeros of $m_\phi (t)\in \mathbf{C}[t]$, the minimal polynomial of $\phi$. \\
		Conversely, given a coherent torsion sheaf $\E$ on $\mathbf{A}_\mathbf{C}^1$ we set $E:=\pi_*\E=H^0(\E)$ for $ \pi: \mathbf{A}_\mathbf{C}^1 \rightarrow *$  and define $\phi:=\pi_*(t \cdot \id)$ for $t\cdot \id$  the tautological endomorphism acting on $\E$ over $\mathbf{A}_\mathbf{C}^1$. \\
		These two constructions are inverse to each other, so giving a torsion sheaf $\E$ on $\mathbf{A}^1$ is equivalent to giving a pair $(E,\phi)$ of vector space and endomorphism.\\
		$\E_\phi$ admits a natural resolution $$0\rightarrow  \pi^*E \xrightarrow{\pi^*\phi-t\cdot \id} \pi^*E \rightarrow \E_\phi \rightarrow 0$$
		which one could use as a definition $\E_\phi:=\coker(\pi^*\phi-t\cdot \id)$. \\
		$\phi$ is diagonalisable if and only if $m_\phi(t)$ has distinct linear factors. In this case, it agrees with $\chi_\phi(t)$, the characteristic equation of $\phi$ and $\E$ is supported on the eigenvalues of $\phi$. For arbitrary $\phi$, the above $2$-step resolution defines a \textit{divisor class}\footnote{We refer to \cite[Ch.2]{KM} for the definition of the divisor class associated to a $2$-step resolution.} $\textrm{div}(\E_\phi):=\det(\pi^*\phi-t\cdot \id)=\chi_\phi(t)$ for $\E_\phi$.  One can think of $\E_\phi$ as the sheaf of (generalised) eigenspaces of $\phi$, supported on their respective eigenvalues in $\mathbf{C}$. We call $\E_\phi$ the \textbf{spectral sheaf} of the pair $(E,\phi)$. 
		
		\subsection{Higgs pairs}
		We do this now in families: fix a smooth projective surface $S$ and a pair $(E,\phi),$ where $E$ is torsion free rank $r$ sheaf on $S$ and $\phi \in \Hom(E,E\otimes K_S)$  an endomorphism of $E$, twisted by the canonical bundle $K_S$ of $S$. We call such a pair a $K_S$-Higgs pair. The total space of the line bundle $X:=\mathrm{Tot}(K_S) \xrightarrow{\pi} S$ is a non-compact Calabi-Yau-threefold, so its structure map $\pi$ has fibres $\pi^{-1}\{s\}\cong \mathbf{A}_\mathbf{C}^1$ . Restricting $(E,\phi)$ to a point $s\in S$ brings us back to a pair $(E_s,\phi_s)$ of vector space and endomorphism, to which we assign the spectral sheaf $\E$ on the $\mathbf{A}_\mathbf{C}^1$-fibre over $s$. \\
		Varying over all of $S$, this defines a torsion sheaf $\E_\phi $ on $X$, supported on the eigenvalues of $\phi$.
		To see that this local picture glues to a global construction on $X$, we refer to the Details in Sec. \ref{Higgs+spectral}, including the original reference from \cite{TT}.
		\subsection{Moduli spaces}
		One is interested in describing \textit{families} of sheaves $\E$ after fixing rank and Chern classes, for which we need a \textit{moduli space} $\N$, i.e. a scheme whose closed points parametrize isomorphism classes $[\E]$ of those sheaves (the ones of our interest are called stable sheaves, in particular we have $\mathbf{Aut}(\E)=\mathbf{C}^\times$). These spaces $\N$ turn out to have a dimension differing from a given $\mathit{expected \; dimension}$: namely, for a moduli space $\N$, we can associate to one of its closed points $[\E]$ the spaces $\mathrm{def}(\E)$ and $\mathrm{ob}(\E)$, the deformations and obstructions of $\E$ in $\N$. These finite $\mathbf{C}$-vector spaces are given by the cohomology groups $\Ext^i(\E,\E)$ for $i=1,2$. We call the difference
		$$\dim_\mathbf{C}\mathrm{def}(\E)-\dim_\mathbf{C}\mathrm{ob}(\E)=\mathrm{ext^1}-\mathrm{ext^2}$$ the \textit{virtual} or \textit{expected dimension}  $vd$ of $\N$, provided the number is constant over all of $\N$.
		The challenge lies now in defining the \textit{virtual} fundamental class $[\N]^{vir} \in A_{vd}(\N)$ in the Chow group $A_*(\N)$ of the given virtual dimension. This has been introduced in \cite{LT} and \cite{BF} in the late 90's.\\
		In general for a moduli space $\N$ parametrizing stable sheaves $\E$, there are bounds \footnote{See \cite[pp.113-115]{HL}  for a precise statement.}
		$$ \ext^1(\E,\E) \geq \dim_{[\E]}\N  \geq \ext^1(\E,\E)-\ext^2(\E,\E)$$
		and we see $\ext^2=0$  implies the smoothness of $\N$ at $[\E]$. This happens e.g. for the moduli space $\mathcal{M}_L$ of stable vector bundles $E$ and fixed determinant $\det(E)=L$ over a smooth projective curve $C$.  Here, deformations and obstructions are given by $\ext^1(E,E)_0$ and $\ext^2(E,E)_0=0$; the suffix $0$ denotes trace-free. In this case, a natural choice for the "virtual" fundamental class $[\mathcal{M}_L]^{vir}$ would be the usual fundamental class $[\mathcal{M}_L]\in A_*(\mathcal{M}_L)$. \\
		If $\E$ is a stable sheaf on a smooth projective threefold $X$ with $\Ext^3(\E,\E)\cong H^3(\OO_X)$, we have by the Riemann-Roch formula $$\ext^1(\E,\E) - \ext^2(\E,\E)=1+h^3(\OO_X)-\int_X\mathrm{ch}(\E^\vee)\mathrm{ch(\E)}\mathrm{Td}(X),$$
		which only depends on the Chern classes of $\E$ and on $X$, so the RHS is constant over all of $\N$.
		Furthermore, if $X$ is of Calabi-Yau type, Serre duality tells us that 
		$$\ext^1-\ext^2=0.$$
		This means if $[\N]^{vir}$ exists and the moduli $\N$ is compact, one could integrate $$\int_{[\N]^{vir}} 1 \in \mathbf{Z}$$ to an integer. This is called a \textit{virtual count} or \textit{Donaldson-Thomas type} invariant.
		\subsection{Virtual cycles}
		For a fixed point $[\E] \in \N$, the space $\Ext^1(\E,\E)$ equals the tangent space $T_{[\E]}\N$ at $[\E] $. \textit{Kuranishi theory} says that locally analytically, the moduli space $\N$ looks like $K^{-1}(0)$ for a map $$K: \Ext^1(\E,\E)\rightarrow \Ext^2(\E,\E), $$
		Globally, one would hope for a model 
		\begin{equation}\label{Kuranishi}
			\begin{tikzcd}
			&											V \arrow[d]\\
			\mathcal{N} \arrow[r,hook]\arrow[ru,"0"] & \mathcal{A} \arrow[bend right =30,swap]{u}{s}\\
		\end{tikzcd}
		\end{equation}
	where $\mathcal{A}$ is a smooth ambient space for $\N$ and $V$ a vector bundle over $\mathcal{A}$ with a section $s: \mathcal{A} \rightarrow V$ cutting out $\N$, so $s$ defines an ideal sheaf $\mathcal{I}\subset \OO_\mathcal{A}$ such that $\OO_\mathcal{A}/ \mathcal{I}\cong\OO_\N.$\\
	This gives an induced map of differentials on $\N$ $$0\rightarrow \T_\N \rightarrow \T_{\mathcal{A}}|_\N \xrightarrow{ds} V|_{\N} \rightarrow \mathrm{Ob}_\N \rightarrow 0,$$
	where we call $\mathrm{Ob}_\N:=\coker(ds)$ the obstruction sheaf of $\N$ with respect to $V$. Thus, there is a 2-term complex of vector bundles ($\mathcal{A}$ is smooth!) $[V_0 \rightarrow V_1]$ whose cohomology sheaves are tangents and obstructions of $\N$. Dually, this is 
	\begin{center}
		\begin{tikzcd}
			V^*|_{\N} \arrow[r,"ds^*"] \arrow[two heads]{d}& \Omega_\mathcal{A}|_\N  \arrow[equal]{d}\\
			\mathcal{I}/\mathcal{I}^2 \arrow[]{r} & \Omega_\mathcal{A}|_\N.
		\end{tikzcd}
	\end{center}
	Here, the section $s$ induces a map $V^* \rightarrow \OO_\mathcal{A}$ mapping onto $\mathcal{I} \subset \OO_\mathcal{A}$. Its restriction to $\N$ gives the LHS arrow; the lower horizontal map is the exterior derivative. We observe that $\ker(ds^*)$ surjects onto $\ker(\mathcal{I}/\mathcal{I}^2\rightarrow \Omega_{\mathcal{A}}|_\N)$ and that the horizontal arrows have isomorphic co-kernels equal to $\Omega_\mathcal{\N}$. \\
	Thus we see that the global model of Diag. \ref{Kuranishi} induces a map 
	\begin{center}
		\begin{tikzcd}
		V^\bullet \arrow[r,equal] \arrow[d,"\Psi"] & {[V^{-1} \rightarrow V^0]} \arrow[d]\\
		\mathbf{L}_\N \arrow[r,equal] & {[\mathcal{I}/\mathcal{I}^2 \rightarrow \Omega_{\mathcal{A}}|_\N]}
		\end{tikzcd}
	\end{center}
in $\mathbf{D}^b(\N)$, the bounded derived category of coherent sheaves on $\N$, such that $\Psi$ is an isomorphism in degree $0$ and a surjection in degree $-1$. Here, $V^\bullet$ is a 2-term complex of vector bundles $V^i$, which we call the \textit{virtual} cotangent bundle. $\mathbf{L}_\N=[\mathcal{I}/\mathcal{I}^2 \rightarrow \Omega_{\mathcal{A}}|_\N]$ is Illusie's truncated cotangent complex, which is, as an object of $\mathbf{D}^b(\N)$, independent of the choice of embedding $\N \subset \mathcal{A}$. We call such a pair $(V^\bullet, \Psi)$ a perfect obstruction theory on $\N$. \\
Although such a global Kuranishi model is rare, it always exists \textit{locally} (as $\N$ is locally $\mathbf{Spec}(A)$ for a finitely generated $\mathbf{C}$-algebra $A$, defined by some ideal $I \subset \mathbf{C}[x_1,\cdots,x_n]$, so choosing $(V,s)$ amounts to choosing generators for $I$ in a trivial bundle $V$ over $\mathbf{A}^n$).\\
One would then like to define the virtual cycle as $$ [\N]^{vir} = s_0^!(\mathrm{C})$$ for $\mathrm{C}:=\lim_{t\rightarrow \infty}\Gamma_{t\cdot s} \subset V_1=V^{-1,\vee}$. Here $\Gamma_{t\cdot s}$ denotes the graph of the section $t\cdot s$ for $t$ a scalar. This limit can be seen as the section $s$ "made vertical": we refer to \cite[pp.87-88]{F} for an explanation of Fulton-Mac-Pherson's deformation to the normal cone and a graphic. Here, $s_0^!$ is the Gysin map\footnote{See  \cite[pp.112-114]{F} for a definition of the Gysin homomorphism.} $$s_0^!: A_*(V_1)\rightarrow A_{*-\rk(V_1)}(\N)$$ for the zero section $s_0: \N \rightarrow V_1$. 
Although a global Kuranishi model might not exist, \cite{BF} show that given a perfect obstruction theory $(V^\bullet,\psi)$, the cones $\mathrm{C}$ of the local charts glue to a cone $ \mathrm{C} \subset V_1$ and $\N$ inherits a virtual cycle $[\N]^{vir}=s_{0}^{!}[\mathrm{C}]$ of the given expected dimension.\\
 In practice, the cohomology of $V^{\bullet,\vee}$ computes $h^0(V^{\bullet,\vee}|_{[\E]})=\mathrm{def}(\E)$ and  $h^1(V^{\bullet,\vee}|_{[\E]})=\mathrm{ob}(\E)$ for a closed point $[\E]\in \N$. \\
The existence of perfect obstruction complexes giving virtual cycles for fine moduli of stable $E$ of $\rk(E)>0$ under the hypothesis $$\Ext^{i}(E,E)_0=0 , \hspace{10pt} i\neq 1,2$$ is proved in \cite[Cor.4.3]{HT} \\
In this paper, we are interested in the $\rk$ 0 spectral sheaves $\E$ on $X=\mathrm{Tot}(K_S)$, where the perfect obstruction theory is given by the truncated Atiyah class on $\N$ (see \cite[Sec.4.4]{HT})
$$ \tau^{[-1,0]}\RHomm_{p_X}(\EE,\EE) [2] \rightarrow \mathbf{L}_\N$$
after fixing a $2$-term complex $V^\bullet$ representing the LHS. This will be reviewed in Sec. \ref{pots}.
\subsection{Vafa-Witten invariants}\label{VWinvariants}
In this paper, we are interested in the moduli space $\N$ of Higgs pairs $(E,\phi)$ on $S$ or equivalently their spectral sheaves $\E_\phi$ on $X=K_S$. To compute interesting invariants and by what we have discussed before, we replace the fundamental cycle $[\N]$ by a virtual cycle $[\N]^{vir}$ of some given expected dimension. \\
However, $\N$ is non-compact, as it admits a $\mathbf{C}^\times$-action by scaling the Higgs pairs $\lambda: (E,\phi)\mapsto (E,\lambda\phi)$ or equivalently the fibres of $X\rightarrow S$ and thus acts on $\E_\phi$ by pull-back. Thus, a virtual cycle on $\N$ is a priori not interesting; the relevant virtual cycle arises after passing to the $\mathbf{C}^\times$-fixed locus $\N^{\mathbf{C}^\times}$. The "$\mathbf{C}^\times$-localisation of virtual cycles", i.e. localising the virtual cotangent bundle $V^\bullet$ on $\N$ at the fixed locus $\N^{\mathbf{C}^\times}$ goes back to \cite{GP} and allows us to compute invariants in the $\mathbf{C}^\times$-equivariant setting via a (virtual) Bott residue formula $$\int_{[\N^{\mathbf{C}^\times}]^{vir}}\frac{1}{e(N^{vir})}.$$
Here, $e(N^{vir})$ denotes the Euler or top Chern class of the \textit{virtual} normal bundle of the obstruction theory: namely, if $V^\bullet$ is the ($\mathbf{C}^\times$-equivariant) obstruction complex, $N^{vir}$ is constructed by taking the restriction $V^\bullet|_{\N^{\mathbf{C}^\times}}$ to the fixed locus and taking the non-zero weight part. 
The fixed locus $\N^{\mathbf{C}^\times}$ has two components, the "\textit{instanton branch}" $\phi=0$ and the "\textit{monopol branch}" $\phi \neq 0$.\\
This would be a first try to define what is a Vafa-Witten invariant \cite{VW} \footnote{Solutions of the "Vafa-Witten" equations correspond to certain stable holomorphic Higgs pairs $(E,\phi)$, which are expected to have modular properties.} on $\N$
$$\mathsf{VW}_\N:= \int_{[\N^{\mathbf{C}^\times}]^{vir}}\frac{1}{e(N^{vir})}.$$
However, this invariant is zero unless $h^{0,1}(S)=0=h^{0,2}(S)$, which in some sense has to do with the fact that the obstruction complex $V$ giving $[\N^{\mathbf{C}^\times}]^{vir}$ is not fixing $\det(E)$. 
The right way to go around this is going from $\mathbf{U}(r)$-pairs $(E,\phi)$ to $\mathbf{SU}(r)$-pairs, which are defined as  
$$\N^\perp= \{(E,\phi)\in \N: \det(E)\cong \OO_S, \tr(\phi)=0 \}\subset \N $$
before doing the $\mathbf{C}^\times$-localisation. \\
This gives a better virtual cycle $[\N^{\perp,\mathbf{C}^\times}]^{vir}$ and a more sensible Vafa-Witten invariant

$$\mathsf{VW}_{\N^\perp}:= \int_{[\N^{\perp,\mathbf{C}^\times}]^{vir}}\frac{1}{e(N^{vir})}$$
Under the assumption that $S$ is simply connected, we refer to \cite{GSY}.\\
One can show that for $\deg K_S <0$ or $K_S\cong \OO_S$ the $\mathsf{VW}$ invariant for the instanton branch $\mathcal{M}$
is $\mathsf{VW}=\int_{\mathcal{M}}c_{vd}(\Omega_\mathcal{M})=(-1)^{vd}e(\mathcal{M})$ simply the signed topological Euler characteristic of the smooth space $\mathcal{M}$ (see \cite[Sec.7]{TT}). \\
The more interesting monopole branch is discussed in \cite[Sec.8]{TT} under certain assumptions and identifies one of its components with nested Hilbert schemes on $S$.
\subsection{Notation}
Before stating the goal of this paper, we fix some notation. Details to the introduced objects are discussed in Sec. \ref{Higgs+spectral} - \ref{TTtriangle}. \\
We fix once and for all $\mathbf{C}$ as the ground field.
Let $(S,\OO(1))$ be a smooth projective surface with polarisation $\OO(1)$ and $K_S$ its canonical sheaf. We denote by $ X:=\mathrm{Tot}(K_S)\xrightarrow{\pi} S$ the total space of this line bundle with canonical projection $\pi$ to the base. \\
A Higgs pair $(E,\phi)$ is a torsion free coherent sheaf $E$ on $S$ of $\rk(E)=r>0$ together with a section $\phi \in \mathrm{Hom}(E,E\otimes K_S)$. We call $E$ a \textit{Higgs bundle} if it is locally free, otherwise a \textit{Higgs sheaf}. By the spectral construction (see Sec. \ref{Higgs+spectral}), this data is equivalent to a sheaf $\E_\phi=\E$ of dimension $2$ on $X$, where we omit the suffix $\phi$ whenever there is no chance of ambiguity. We call $\E_\phi$ the spectral sheaf corresponding to $(E,\phi)$ and we may write $(E,\phi)=\E_\phi$. \\
All sheaves considered will be (Gieseker)-stable (see Sec. \ref{Gieseker}), in particular we have $\mathrm{Aut}(\E)=\mathrm{Aut}(E,\phi)=\mathbf{C}^\times$. 
$\N$ is a coarse moduli space with fixed invariants for Higgs sheaves on $S$ or equivalently, spectral sheaves on $X$. \\
There is a moduli stack $\M$ of sheaves $E=\pi_*\E$ on $S$ with these invariants. \\
We denote by $\EE$ a (twisted) universal family over $X\times\N$ of spectral sheaves $\E_\phi$ on $X$.
Equivalently, there is a twisted universal Higgs pair $(\mathsf{E},\Phi)$ over $S\times \N$. We give more details to these objects in Sec. \ref{modulispaces} and \ref{universalsheaves}.\\
We denote by $\RHomm(\EE,\EE) \in \mathbf{D}^b(\X\times\N)$, the derived $\Homm$-functor in the bounded category of coherent sheaves on $X\times\N$. We set $$\mathbf{R}\mathcal{H}om_{p_X}(\EE,\EE):= \mathbf{R}{p_{X,*}}(\mathbf{R}\Homm(\EE,\EE))\in \mathbf{D}^b(\N),$$
where $p_X$ is the canonical projection $ X\times \N \rightarrow \N$. Analogously, we define $p_S: S\times \N \rightarrow \N$. We will use for the canonical map $\pi: X\rightarrow S$ and its base change $\pi:=\pi \times \id: X\times \N \rightarrow S \times \N$ the same notation. Furthermore, the derived sheaves $\mathbf{R}p_{S,*}\mathcal{O}_{S}$ and $\mathbf{R}p_{S,*}K_{S}$ denote the push-downs to $\N$ of the sheaves $\OO_{S\times \N}$ and $ K_S\otimes \OO_{S\times \N} $ on $S\times \N$ respectively. \\
There are splittings (Sec. \ref{splitting}) of these sheaves denoted as $$\mathbf{R}\mathcal{H}om_{p_X}(\EE,\EE)= \mathbf{R}\mathcal{H}om_{p_X}(\EE,\EE)_0\oplus\mathbf{R}{p_{S,*}}\OO_S,$$  $$\mathbf{R}\mathcal{H}om_{p_X}(\EE,\EE)= \mathbf{R}\mathcal{H}om_{p_X}(\EE,\EE)^0\oplus \mathbf{R}p_{S,*}K_S[1] $$ 
and $$ \mathbf{R}\mathcal{H}om_{p_X}(\EE,\EE)=\mathbf{R}\mathcal{H}om_{p_X}(\EE,\EE)^\perp \oplus \mathbf{R}p_{S,*}K_{S}[-1]\oplus \mathbf{R}p_{S,*}\mathcal{O}_{S}.$$ 
Here, $\mathbf{R}\mathcal{H}om_{p_X}(\EE,\EE)_0$ denotes trace-free homomorphisms.

\subsection{Goal}
Defining a perfect obstruction theory for $\N^\perp$ takes over thirty pages in \cite{TT}. The aim of this article is to \textit{redefine} this perfect obstruction theory for $\mathbf{SU}(2)$-pairs by identifying them as fixed points  in $ \N$ (we focus entirely on $\rk(E)=2$): Applying the ideas from the $\mathbf{C}^\times$-localisation of Graber-Phandaripande, we replace the torus action by an involution $\iota: \N \rightarrow \N$ generically defined as $$\iota: (E,\phi) \mapsto (E^*,-\phi^*)$$
and one sees quite easily that the fixed locus $\N^\iota$ contains $\N^\perp$ as a connected component. \\
Taking the perfect obstruction theory $V^\bullet$ on $\N$ and making it $\iota$ -equivariant splits, just like in the case of \cite{GP}, $$V^\bullet=V^{\bullet, \iota}\oplus V^{\bullet, mov}$$ into fixed and moving part over $\N^\perp$. As $\iota$ has square equal to the identity, the moving part $V^{\bullet, mov}$ or \textit{virtual co-normal bundle } ( which is the non-zero weight part in \cite{GP}) is the $-1$ eigensheaf of the action of $\iota$ on $V^\bullet$. This is shown in Sec.\ref{determinant} and \ref{trace}.\\
Tanaka-Thomas show for arbitrary $\rk(E)$ that the differentials on tangent complexes of the classifying maps
\begin{equation*}
\begin{tikzcd}
\N \arrow[r] \arrow{d}{\det\pi_*} \arrow{r}{\tr}& \Gamma(K_S)\\
\mathbf{Pic}(S)& 
\end{tikzcd}
\end{equation*}
where $\tr:\E_\phi \mapsto \tr(\phi)$ and $\det:\E_\phi \mapsto \det\pi_*(\E_\phi)$,
commute with the following split-maps of \textit{virtual} tangent complexes via Atiyah classes:
\begin{equation}
\begin{tikzcd}
\mathbf{R}\Homm_{p_{X}}(\EE,\EE)[1] \arrow[r] \arrow[d] & \mathbf{R}p_{S_*}K_S\\
\mathbf{R}p_{S_*}\mathcal{O}_{S}[1] .
\end{tikzcd}
\end{equation}
Our approach is lifting the involution $\iota$ to the tangent/obstruction complex $ \mathbf{R}\mathcal{H}om_{p_X}(\EE,\EE)[1]$, namely a map $$\theta_{\iota}: \mathbf{R}\Homm_{p_{X}}(\EE,\EE)[1]\rightarrow \mathbf{R}\Homm_{p_{X}}(\iota\EE,\iota\EE)[1]$$ whose restriction to $\N^\perp$ gives a genuine endomorphism on $\mathbf{R}\Homm_{p_{X}}(\EE,\EE)[1]|_{\N^\perp}$ with square equal to the identity, such that $$(\mathbf{R}p_{S,*}\OO_S[1]\oplus \mathbf{R}p_{X,*}K_S)|_{\N^\perp}$$
is the $-1$ \textit{eigensheaf} of $\theta_{\iota}$ represented by the complex $V^{\bullet, mov}$. \\
The virtual cycle of $\N^\perp$ is then constructed by taking $V^\bullet$ over $\N^\perp$ and remove the direct summand $V^{\bullet, mov}$. We will show that the fixed part $V^\bullet |_{\N^\perp}^\iota$ representing  $\mathbf{R}\Homm_{p_{X}}(\EE,\EE)[1]|_{\N^\perp}^{\iota}$ defines a perfect obstruction theory for $\N^\perp$. \\
The challenges here were 
\begin{itemize}
	\item to generalise $(E,\phi) \mapsto (E^*,-\phi^*)$ to torsion free sheaves and phrase this in terms of their spectral sheaves on $X$
	\item to define the correct lift $\theta_{\iota}$, compatible with Atiyah classes. For the deformation theory of $\tr(\phi)$, we had to use one result of \cite{TT}. 
\end{itemize}
We hope to construct with the methods presented new Vafa-Witten invariants for different Lie algebras (see \cite{Hi}).

\subsection{Acknowledgements}
I am deeply grateful to my advisor Richard Thomas for his help, his energy and his patience. I thank him for hours and hours of explanation and countless emails. Special thanks to Filippo Viviani, who helped me a lot in the beginning.\\
Furthermore, I thank Georg Oberdieck and Woonam Lim for extremely helpful conversations and Tim Bülles, Denis Nesterov, Andrea Ricolfi and Sandro Verra for inspiring discussions. \\
Special thanks go to Martijn Kool for suggestions and helpful comments after the first submission. In addition, we would like to thank Daniel Huybrechts for a conversation about expressing the $\mathbf{SU}(2)$-locus as fixed points. \\
Lastly, I want to thank the anonymous referees for their remarks and suggestions.
\section{Preliminaries}\label{preliminaries}
\subsection{Equivariant sheaves}
Let $\G$ be an algebraic group acting on a quasi-projective scheme $Y$. 
For all $g\in \G$, the pullback $g^*:\FF \mapsto g^*\FF$ acts as an auto-equivalence on $\mathbf{D}^b(Y)$ with inverse $g_*=g^{-1,*}$. \\
We define for the special case $\G\cong \mathbf{Z}/2\mathbf{Z}$, i.e. $\G= \langle \iota \rangle $ given by an involution $\iota: Y\rightarrow Y$ the $\iota$-\textit{equivariant} derived category of coherent sheaves $$\mathbf{D}^b(Y)^{\langle \iota \rangle}$$  as follows:
\begin{dfn}\label{equi}
	We call an object $\FF\in \mathbf{D}^b(Y)$ $\iota$-\textit{equivariant} if there is an isomorphism $\theta_{\iota}:\FF \rightarrow \iota^*\FF$ such that 
	\begin{equation} \label{triangle}
	\begin{tikzcd}
	\FF\arrow[r,"\theta_{\iota}"] \arrow[dr,equal]& \iota^*\FF \arrow[d,"\iota^*\theta_\iota"] \\
	&(\iota^*)^2\FF
	\end{tikzcd}
	\end{equation}
	commutes. 
\end{dfn}
\begin{dfn}
	We call a morphism $\psi:\FF \rightarrow \FF'$ in $\mathbf{D}^b(Y)$ a morphism of $\iota$-equivariant sheaves, if for $\FF,\FF'$ $\iota$-equivariant as above, the triangles induced by $\theta_\iota: \FF \rightarrow \iota^*\FF$ and  $\theta_\iota': \FF' \rightarrow \iota^*\FF' $ map to each other via
	\begin{equation} \label{equivmor}
	\begin{tikzcd}
	\FF \arrow[r,"\theta_{\iota}"] \arrow[d,"\psi"]& \iota^*\FF \arrow[d,"\iota^*\psi"]\\
	\FF' \arrow[r,"\theta_\iota'"]& \iota^*\FF'\\
	\end{tikzcd}
	\end{equation}
	and its pullback by $\iota$.
\end{dfn}
\begin{dfn}
	Pairs $(\FF, \theta_\iota)$ together with their compatible maps $\psi$ form a category that we denote by $\mathbf{D}^b(Y)^{\langle \iota \rangle}$. There is a forget functor 
	\begin{align*}
	\mathbf{D}^b(Y)^{\langle \iota \rangle} &\rightarrow \mathbf{D}^b(Y),\\
	(\FF,\theta_{\iota})&\mapsto \FF
	\end{align*}
	which forgets the equivariant structure.
\end{dfn}
	For a thorough introduction, we refer the reader to \cite[pp.3-6]{R} for $\G$-equivariant categories of coherent sheaves and \cite[pp.3-10]{BO} for a more general setup. We remark the following:
\begin{rmk}
	As $\langle \iota \rangle $ is finite, the fixed locus $Y^{\iota}\subset Y$ is a (possibly empty) closed subvariety of $Y$. \\
	If $Y=\{*\}$, a $\iota$-equivariant coherent sheaf on $Y$ is just a finite dimensional vector space $V$ with a representation
	\begin{align*}
		\theta: \langle \iota \rangle &\rightarrow \mathrm{Aut}(V)\\
		\iota &\mapsto \theta_{\iota}: v \mapsto \theta_{\iota}(v)
	\end{align*}

	 Here, we make $V$ into a $\mathbf{C}[\langle \iota \rangle]$-module via $\iota.v:=\theta_\iota(v)$ for $\theta_\iota:V\rightarrow g^*V=V$ the linearisation map. As $\theta_{\iota}$ has square equal to the identity, this splits $V\cong V^{+}\oplus V^{-}$ into $\pm1$ eigenspaces, corresponding to $\iota$-fixed and moving part.
\end{rmk}	
\begin{lem}
	Let $V^\bullet$ be a $\iota$-equivariant complex of coherent sheaves on $Y$ as defined in Def. \ref{equi} and assume $Y^{\iota}\neq \emptyset$. Then $V^\bullet|_{Y^\iota}$ carries the structure of a complex of a $\OO_{Y^\iota}[\langle \iota \rangle]$-module.  
\end{lem}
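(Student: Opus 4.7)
The plan is to restrict the equivariant data to $Y^\iota$ and exploit that $\iota$ acts as the identity there. Concretely, since the inclusion $j: Y^\iota \hookrightarrow Y$ satisfies $\iota \circ j = j$, for any complex $\FF \in \mathbf{D}^b(Y)$ there is a canonical natural isomorphism $j^*(\iota^*\FF) = (\iota \circ j)^* \FF = j^*\FF$. Applying this to $\FF = V^\bullet$ and restricting the equivariant structure map, the isomorphism $\theta_\iota : V^\bullet \to \iota^* V^\bullet$ pulls back to a genuine endomorphism
\[
\bar\theta_\iota := j^*\theta_\iota : V^\bullet|_{Y^\iota} \longrightarrow V^\bullet|_{Y^\iota}
\]
of complexes on $Y^\iota$.

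Next, I would pull back the commutative triangle \eqref{triangle} along $j$. Under the canonical identifications above, the composition $V^\bullet \xrightarrow{\theta_\iota} \iota^* V^\bullet \xrightarrow{\iota^*\theta_\iota} (\iota^*)^2 V^\bullet = V^\bullet$ restricts to $\bar\theta_\iota \circ \bar\theta_\iota$, while the diagonal identity arrow restricts to $\id_{V^\bullet|_{Y^\iota}}$. Hence $\bar\theta_\iota^{\,2} = \id$, so $\bar\theta_\iota$ is an involution of the complex $V^\bullet|_{Y^\iota}$ that is $\OO_{Y^\iota}$-linear term-by-term.

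Since $\OO_{Y^\iota}[\langle\iota\rangle] \cong \OO_{Y^\iota}[t]/(t^2 - 1)$, giving each $V^i|_{Y^\iota}$ the structure of an $\OO_{Y^\iota}[\langle\iota\rangle]$-module is equivalent to specifying an $\OO_{Y^\iota}$-linear involution; I let $\iota$ act by $\bar\theta_\iota^{\,i}$ on $V^i|_{Y^\iota}$. Finally, because $\theta_\iota$ is a morphism of complexes, each differential $d^i$ commutes with $\theta_\iota$, and therefore $d^i|_{Y^\iota}$ commutes with $\bar\theta_\iota$, i.e.\ is $\OO_{Y^\iota}[\langle\iota\rangle]$-linear. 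This exhibits $V^\bullet|_{Y^\iota}$ as a complex of $\OO_{Y^\iota}[\langle\iota\rangle]$-modules.

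The one subtlety I anticipate is the passage between honest complexes and objects of $\mathbf{D}^b(Y)$: the cocycle \eqref{triangle} a priori only commutes in the derived category, so one must either assume (as is the case in the intended application, where $V^\bullet$ is a two-term complex of locally free sheaves) that $\theta_\iota$ is realised by a strict chain map with $\iota^*\theta_\iota \circ \theta_\iota = \id$ on the nose, or appeal to a rectification statement for $\langle\iota\rangle$-equivariant objects to replace $V^\bullet$ by a quasi-isomorphic complex carrying a strict $\iota$-action. In either case, the remaining bookkeeping is as described above.
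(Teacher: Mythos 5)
Your proposal is correct and follows essentially the same route as the paper: restrict the linearisation $\theta_\iota$ to the fixed locus (where $\iota^*$ becomes the identity functor), use the cocycle triangle \eqref{triangle} to conclude that the restricted map is an involution, and observe that an $\OO_{Y^\iota}$-linear involution commuting with the differentials is exactly an $\OO_{Y^\iota}[\langle\iota\rangle]$-module structure on each term of the complex. The paper phrases this by reducing to the affine case $Y^\iota=\mathbf{Spec}(A)$ and gluing, while you work globally via $j^*\iota^*=j^*$; this is only a cosmetic difference, and your closing remark about strictifying the derived-category cocycle is a reasonable caveat that the paper implicitly absorbs by working with a fixed complex of sheaves.
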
	
\begin{proof}
	We first reduce to the local case: Let $Y^\iota=\mathbf{Spec}(A)$ for a finitely generated $\mathbf{C}$-algebra $A$, so we may assume $V^\bullet|_{Y^\iota}$ is given by a complex of finite $A$-modules $M^\bullet$, on which $\theta_{\iota} $ acts. For $v \in M^k$, defining  $\iota.v:=\theta_{\iota}^k(v)$ for $\theta_{\iota}^k:M^{k} \rightarrow M^{k}$ and extending $A$-linearly makes each $M^{k}$ of $M^\bullet$ into an $A[\langle \iota \rangle]$-module, as we have
	\begin{center}
		\begin{tikzcd}
		M^k \arrow[r,"\theta_{\iota}^k"] \arrow[rd,equal]& M^k \arrow[d,"\theta_{\iota}^k"]\\
		&	M^k.
		\end{tikzcd}
	\end{center} \noindent
	 As the $\theta_{\iota}^k$ are compatible with the differential maps of $M^\bullet$, this lifts $M^\bullet$ to a complex of $A[\langle \iota \rangle]$-modules. Using the globally defined equivariant structure given by $\theta_\iota \in \End(V^\bullet|_{Y^\iota})$, this construction glues to all of $Y^\iota$ and makes $V|_{Y^\iota}^\bullet$ into a complex of $\OO_{Y^\iota}[\langle  \iota \rangle]$-modules. 
\end{proof}
Furthermore, for $\G$ a finite group, we have the following lemma:
\begin{lem}\label{Glinear}
	Let $\G$ be a finite group and $V$ a coherent $\OO[\G]$-module. Then the functor of taking fixed parts $V \mapsto V^\G$ is exact. 
\end{lem}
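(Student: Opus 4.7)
The plan is to reduce exactness to the existence of a natural idempotent projector onto the fixed part, exploiting that the ground field is $\mathbf{C}$ so $|\G|$ is invertible in $\OO$. For any coherent $\OO[\G]$-module $V$, I define the Reynolds/averaging endomorphism
$$ e_V := \frac{1}{|\G|}\sum_{g \in \G} g \;\in\; \End_\OO(V).$$
The $\OO$-linearity is immediate, and a short check gives $e_V^2 = e_V$ together with $\mathrm{Im}(e_V) = V^\G$: if $v \in V^\G$ then $e_V(v)=v$, and for arbitrary $v \in V$ one has $h \cdot e_V(v) = \frac{1}{|\G|}\sum_g hg\cdot v = e_V(v)$ for every $h \in \G$.

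This gives a natural direct sum decomposition $V = V^\G \oplus \ker(e_V)$ of $\OO[\G]$-modules, and by naturality in $V$ the projector $e$ upgrades to a natural transformation $e : \mathrm{id} \Rightarrow \mathrm{id}$ on the category of coherent $\OO[\G]$-modules with $e^2 = e$. The functor $(-)^\G$ is therefore isomorphic to the image of $e$, i.e. a direct summand of the identity functor. Since the identity is exact and any direct summand of an exact functor is exact, $(-)^\G$ is exact.

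If one prefers the hands-on route: given a short exact sequence $0 \to V' \to V \to V'' \to 0$, left exactness of $(-)^\G$ is formal (it is represented by $\Hom_{\OO[\G]}(\OO,-)$, the trivial module being projective over $\OO[\G]$ since $|\G|$ is invertible). For right exactness, take $v'' \in (V'')^\G$ and lift to $\tilde v \in V$; then $e_V(\tilde v) \in V^\G$ maps to $e_{V''}(v'') = v''$, proving surjectivity on fixed parts.

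The only potential obstacle is the invertibility of $|\G|$ in $\OO$, which would fail in positive characteristic dividing $|\G|$; here it is automatic because the paper fixes $\mathbf{C}$ as the ground field throughout, so all structure sheaves are $\mathbf{Q}$-algebras.
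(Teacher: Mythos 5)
Your proof is correct and rests on the same key ingredient as the paper's: the averaging idempotent $\frac{1}{|\G|}\sum_{g\in\G} g$, available because the ground field is $\mathbf{C}$. The paper packages this element as the splitting $\OO \hookrightarrow \OO[\G] \twoheadrightarrow \OO$ exhibiting $\OO$ as a projective $\OO[\G]$-module (so that $(-)^\G \cong \Homm_{\OO[\G]}(\OO,-)$ is exact), while you let the same element act directly on $V$ as a natural idempotent projector onto $V^\G$, making $(-)^\G$ a retract of the identity functor --- equivalent formulations of the same argument, and your ``hands-on'' second paragraph in fact reproduces the paper's route.
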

\begin{proof}
	We remark that the functor $V\mapsto V^\G$ is naturally isomorphic to $\Homm_{\OO[\G]}(\OO,\_)$, where $\OO$ is endowed with the trivial $\G$-action. Thus, taking fixed parts is exact if and only if $\OO$ is projective as an $\OO[\G]$-module. \\
	As previously, we discuss the affine case first, i.e. $V$ is the associated sheaf of modules of a finite $A$-module $M$: The natural projection $\G \twoheadrightarrow \{1\}$ has a right-inverse given by $1 \mapsto \frac{1}{|\G|}\sum_{g \in \G}g$, which defines an epimorphism $ A[\G] \twoheadrightarrow A$ with a section $A \hookrightarrow A[\G]$. This shows that $A$ is projective, as it is a direct summand of the free module $A[\G]$. Globally, the sheafification of this construction realises $\OO$ as a direct summand of the free sheaf $\OO[\G]$. 
\end{proof}
\begin{cor}
	We see this easily extends to a complex $V^\bullet$ of $\OO[\G]$-modules. 
\end{cor}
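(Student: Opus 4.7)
The plan is to deduce the corollary by applying the previous lemma termwise, exploiting the fact that both exactness of a complex and exactness of a short exact sequence of complexes are degreewise conditions.

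First, I would recall that a complex $V^\bullet$ of $\OO[\G]$-modules consists of $\OO[\G]$-modules $V^i$ together with $\OO[\G]$-linear differentials $d^i : V^i \rightarrow V^{i+1}$. Taking fixed parts is defined degreewise, setting $(V^\bullet)^\G$ to be the complex $\cdots \rightarrow (V^i)^\G \xrightarrow{d^i|} (V^{i+1})^\G \rightarrow \cdots$, which makes sense because $\G$-equivariant maps restrict to $\G$-fixed subsheaves. I would then consider a short exact sequence of complexes of $\OO[\G]$-modules
\begin{equation*}
0 \rightarrow A^\bullet \rightarrow B^\bullet \rightarrow C^\bullet \rightarrow 0,
\end{equation*}
which by definition means that in every degree $i$ the sequence $0 \rightarrow A^i \rightarrow B^i \rightarrow C^i \rightarrow 0$ is exact in the category of $\OO[\G]$-modules.

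Applying the previous lemma degree by degree gives that $0 \rightarrow (A^i)^\G \rightarrow (B^i)^\G \rightarrow (C^i)^\G \rightarrow 0$ is exact for every $i$, which is exactly the statement that
\begin{equation*}
0 \rightarrow (A^\bullet)^\G \rightarrow (B^\bullet)^\G \rightarrow (C^\bullet)^\G \rightarrow 0
\end{equation*}
is a short exact sequence of complexes. Equivalently, phrased in the language of the lemma's proof, the functor $(-)^\G \cong \Homm_{\OO[\G]}(\OO,-)$ is exact because $\OO$ is a direct summand of the free $\OO[\G]$-module $\OO[\G]$ via the averaging idempotent $\frac{1}{|\G|}\sum_{g \in \G} g$, and this projectivity property passes to complexes without modification.

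There is no real obstacle here: the content lies entirely in the lemma, and the corollary is a formal consequence of the fact that exactness of functors on an abelian category automatically lifts to the category of chain complexes over that category, since kernels, cokernels and exact sequences of complexes are all computed degreewise. I would simply spell out the two lines above and conclude.
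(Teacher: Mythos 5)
Your proposal is correct and matches the paper's intent exactly: the paper offers no proof beyond the word ``easily,'' and the routine degreewise verification you spell out (fixed parts and exactness of complexes are both computed termwise, so the lemma applies in each degree) is precisely the argument being alluded to. The only hypothesis worth noting is that the averaging idempotent $\frac{1}{|\G|}\sum_{g\in\G} g$ requires $|\G|$ to be invertible, which holds since the paper works over $\mathbf{C}$.
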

\begin{rmk}
	We end this section with the remark that a $\iota$-equivariant sheaf (or complex thereof) $\FF$ over $Y$ is a $\OO[\iota]$-module over $Y^\iota$, which splits $\FF|_{Y^\iota}\cong\FF^{+}\oplus \FF^{-}$ into invariant and moving part. 
\end{rmk}

\subsection{Equivariant embeddings} \label{equiv}
\begin{dfn}
	We call an invertible sheaf $\mathscr{L} \in \mathbf{D}^b(Y)^{\langle \iota \rangle}$ a $\iota$-linearised line bundle.
\end{dfn}
If $\mathscr{L}$ is a very ample, $\iota$-linearised line bundle on $Y$, the isomorphism $\theta_\iota: \mathscr{L} \xrightarrow{\sim} \iota^*\mathscr{L}$ defines an action of $\iota$ on the vector space $H^0(\mathscr{L})$ via the composition $$H^0(\mathscr{L}) \xrightarrow{\theta_{\iota}} H^0(\iota^*\mathscr{L}) \xrightarrow{\sim} H^0(\mathscr{L}),$$
where the second arrow is the natural pullback. The induced embedding $Y \hookrightarrow \mathbf{P}(H^0(\mathscr{L})^\vee)$ lifts $\iota$ to a projective ambient space $\mathbf{P}$ of $Y$, extending $\langle \iota \rangle$. \\
\begin{lem}\label{equivemb}
Let $Y$ be a quasi-projective variety and $\iota$ an involution acting on $Y$. Then there exists a $\iota$-equivariant embedding into a smooth ambient space $Y \subset \mathcal{A}$.	
\end{lem}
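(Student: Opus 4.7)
The plan is to reduce the lemma to the case where the ambient space is a projective space: if we can build a $\iota$-equivariant locally closed immersion $Y \hookrightarrow \mathbf{P}(V^\vee)$ for some finite-dimensional $\iota$-representation $V$, then $\mathcal{A}:=\mathbf{P}(V^\vee)$ is smooth and inherits an $\iota$-action extending the one on $Y$, exactly as in the paragraph preceding the lemma. Thus the problem splits into two tasks: (i) produce a very ample $\iota$-linearised line bundle $\mathscr{L}$ on $Y$, and (ii) find a finite-dimensional $\iota$-invariant subspace of $H^0(Y,\mathscr{L})$ that still defines an embedding.

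For (i), start with any very ample line bundle $\mathscr{L}_0$ on $Y$, which exists by quasi-projectivity. Since $\iota$ is an automorphism, $\iota^*\mathscr{L}_0$ is again very ample, so
\[
\mathscr{L} \;:=\; \mathscr{L}_0 \otimes \iota^*\mathscr{L}_0
\]
is very ample as a tensor product of very ample line bundles. The canonical ``swap'' isomorphism $\theta_\iota:\mathscr{L}\xrightarrow{\sim}\iota^*\mathscr{L}$ sending $a\otimes b\mapsto b\otimes a$ (after identifying $(\iota^2)^*\mathscr{L}_0=\mathscr{L}_0$ via $\iota^2=\id$) manifestly satisfies the cocycle condition $\iota^*\theta_\iota\circ\theta_\iota = \id$ of Definition \ref{equi}, so $\mathscr{L}\in\mathbf{D}^b(Y)^{\langle\iota\rangle}$.

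For (ii), pick a finite-dimensional subspace $V_0\subset H^0(Y,\mathscr{L})$ so that the induced morphism $Y\to\mathbf{P}(V_0^\vee)$ is a locally closed immersion; such a $V_0$ exists by very ampleness of $\mathscr{L}$ and quasi-projectivity of $Y$. Then enlarge $V_0$ to
\[
V \;:=\; V_0 + \iota^{*}V_0 \;\subset\; H^{0}(Y,\mathscr{L}),
\]
where $\iota^{*}$ acts on global sections through $\theta_\iota$. The space $V$ is finite-dimensional and $\iota$-invariant, and the surjection $V\twoheadrightarrow V_0$ gives a linear projection $\mathbf{P}(V^\vee)\dashrightarrow\mathbf{P}(V_0^\vee)$ whose composition with $Y\to\mathbf{P}(V^\vee)$ is the original locally closed immersion; hence $Y\hookrightarrow\mathbf{P}(V^\vee)$ is itself a locally closed immersion. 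By construction this embedding intertwines $\iota$ on $Y$ with the projective $\iota$-action induced by the representation $V$, and we take $\mathcal{A}:=\mathbf{P}(V^\vee)$.

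The main technical point is really just step (ii): ensuring that after symmetrising $V_0$ one does not destroy the embedding property. The fix above—simply enlarging $V_0$ rather than averaging—avoids any issue because a projection from an ambient $\mathbf{P}(V^\vee)$ can only \emph{improve} separation of points and tangent vectors relative to the target $\mathbf{P}(V_0^\vee)$. Everything else (existence of very ample $\mathscr{L}_0$, tensor products of very ample bundles being very ample, and the swap linearisation being a strict involution) is routine.
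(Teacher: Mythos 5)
Your proof is correct and follows essentially the same route as the paper: form $\mathscr{L}_0\otimes\iota^*\mathscr{L}_0$, linearise by swapping the factors, and embed equivariantly into the projective space of sections. Your step (ii), which replaces the full (possibly infinite-dimensional, since $Y$ is only quasi-projective) space $H^0(Y,\mathscr{L})$ by a finite-dimensional $\iota$-invariant subspace still defining an immersion, is a worthwhile extra care that the paper's own proof glosses over.
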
	
\begin{proof}
	As $Y$ is quasi-projective, we choose a very ample line bundle $\mathscr{L}$ on $Y$. Then the line bundle  $\mathscr{L} \otimes \iota^* \mathscr{L}$ is again very ample and now $\iota$-linearised by swapping the factors.\\
		As explained above in $\ref{equiv}$, this gives an embedding into a smooth ambient space $\N\hookrightarrow \mathcal{A}$, lifting the action of $\iota$. 
\end{proof}
\subsection{Illusie's cotangent complex} \label{Atiyahclass}
\begin{dfn}\label{represbycomplex}
	We call a morphism $\psi: \FF \rightarrow \FF'$ in $\mathbf{D}^b(Y)$ represented by complexes if we fix representations $\FF^\bullet \rightarrow \FF$ and $\FF^{'\bullet} \rightarrow \FF'$ of coherent sheaves $\FF^{i},\FF^{'i}$ computing the cohomology of $\FF$ and $\FF'$, together with a map of complexes $\psi^\bullet: \FF^\bullet \rightarrow \FF^{'\bullet}$ such that 
	\begin{center}
		\begin{tikzcd}
		\FF^\bullet \arrow[d] \arrow[r,"\psi^\bullet"]& \FF'^\bullet \arrow[d]\\
		\FF \arrow[r,"\psi"]& \FF'		
		\end{tikzcd}
	\end{center}
	commutes.
\end{dfn}
\begin{dfn}
	For a complex $\FF^\bullet \in \mathbf{D}^b(Y)$, we define the canonical truncation $\tau^{\geq a}\FF^\bullet$ to be the complex defined as
    \begin{center}
		\begin{tikzcd}
         \FF^\bullet \arrow[d] & \dots \arrow[r]  & \FF^{a-1} \arrow[d] \arrow[r]& \FF^{a} \arrow[d]\arrow[r] & \FF^{a+1} \arrow[d] \arrow[r] & \dots \\
		\tau^{\geq a} \FF^\bullet & \dots \arrow[r] & 0 \arrow[r]& \coker(d_{n-1}) \arrow[r] & \FF^{a+1} \arrow[r] & \dots 		
		\end{tikzcd}
	\end{center}    
    which implies
	\begin{equation*}
	H^{i}(\tau^{\geq a}\FF^\bullet)=
	\begin{cases}
	H^{i}(\FF^{\bullet}) & \text{if } i  \geq a\\
	0      & \text{else }.
	\end{cases}
	\end{equation*}
    Dually we may define $\tau^{\leq a}\FF^\bullet$ and similarly $\tau^{[a,b]}\FF^\bullet$, the two-sided truncation in $\mathbf{D}^{[a,b]}(Y)$ such that $H^{i}(\tau^{[a,b]}\FF^\bullet)=H^{i}(\FF^\bullet)$ whenenver $i\in [a,b]$.  
\end{dfn}
\begin{dfn}\label{Illusie}
We denote by $\tau^{\geq -1}\mathbf{L}_Y$ \textit{Illusie's truncated cotangent complex} and refer to \cite[pp.160-172]{I} for are detailed introduction. In our case, the following special case is sufficient: Namely, if $Y\subset \mathcal{A}$ admits an embedding into a smooth $\mathcal{A}$, $\tau^{\geq -1}\mathbf{L}_Y$ is represented by the two term complex $$\tau^{\geq 1}\mathbf{L}_Y:= [\mathcal{I}/\mathcal{I}^2 \xrightarrow{d} \Omega_{\mathcal{A}}|_{{Y}}] \in \mathbf{D}^{[-1,0]}(Y)$$ for $\mathcal{I} \subset \mathcal{O}_{\mathcal{A}}$ the ideal sheaf of this embedding; we note that $h^0({\tau^{\geq -1}\mathbf{L}_{Y}})\cong \Omega_Y$. We omit the truncation symbol $\tau^{\geq -1}$ from notation for the rest of this discussion.
\end{dfn}
\begin{rmk}
	It is a well known fact that as an object in $\mathbf{D}^b(\N)$, $\mathbf{L}_Y$ is independent of the choice of $\mathcal{A}$, see eg. \cite[pp.16-17]{R}.\\
	Furthermore, $\mathbf{L}_Y$ is functorial, i.e. for morphisms $f: Y \rightarrow Y'$ there are differentials $$f_*: f^*\mathbf{L}_{Y'} \rightarrow \mathbf{L}_{Y}.$$
	For later computations,we denote by $\mathbf{T}:=\mathbf{L}^\vee$ the tangent complex, dual to $\mathbf{L}$ with covariant differentials $$f_*: \mathbf{T}_{Y} \rightarrow f^*\mathbf{T}_{Y'},$$
	where, by abuse of notation, both maps are denoted as $f_*$.
\end{rmk}
We cite the following observation from \cite[p.17]{R}: 
\begin{rmk}\label{equivillusie}
	If $Y\subset \mathcal{A}$ is a smooth embedding extending any involution $\iota$ as in \ref{equivemb}, then $\mathbf{L}_Y$ is canonically $\iota$-equivariant, i.e. admits a canonically lift to $\mathbf{D}^b(Y)^{\langle i \rangle}$. 
\end{rmk}
\subsection{The Atiyah class}\label{fullAtiyah}

Let $$i_{\Delta_{Y}}: Y \rightarrow \Delta_{Y} \subset Y\times Y$$ be the diagonal map and let $p_1,p_2$ be the canonical projections $\Y\times \Y \rightarrow Y$.\\
The \textit{universal Atiyah class} is given by a morphism $$\alpha_{Y}: \OO_{\Delta_{Y}} \rightarrow i_{\Delta_Y,*}\mathbf{L}_Y[1] \in \mathbf{D}^b(Y\times Y),$$ see  \cite[Ch.5]{HT} for details.\\
For $\FF \in \mathbf{D}^b(Y)$,
the \textit{full Atiyah class} $\At_\FF$ of $\FF$ is $$\mathrm{At}_\FF:=p_{2,*}(p^*_1 \FF \otimes \alpha_{Y}): \FF \rightarrow \FF \otimes \mathbf{L}_{Y}[1], $$
which can be seen as a morphism $$\RHomm(\FF,\FF) \rightarrow \mathbf{L}_Y[1] \in \mathbf{D}^b(Y).$$
\subsubsection{Naturality}
A simple but important observation is the fact that the Atiyah class is natural: Namely, if $\psi: \FF \rightarrow \FF'$ is a morphism in $\mathbf{D}^b(Y)$, then 
\begin{equation}\label{T Atiyah}	
\begin{tikzcd}
\FF \arrow[r,"\psi"] \arrow[d,"\At_{\FF}"]&\FF' \arrow[d,"\At_{\FF'}"]\\
\FF\otimes \mathbf{L}_{Y}[1] \arrow[r,"\psi\otimes 1"] &\FF' \otimes \mathbf{L}_{Y}[1]	
\end{tikzcd}
\end{equation}
commutes, by functoriality of $p_{2,*},p_1^*$ and tensor product.\\
\subsubsection{Functoriality}
Let $f:Y \rightarrow Y$ be an endomorphism. By functoriality of the Atiyah class we mean that the pullback of $\At_{\FF}: \RHomm(\FF,\FF) \rightarrow \mathbf{L}_{Y}[1]$ by $f$ composed with the natural differential $ f^*\mathbf{L}_{Y} \xrightarrow{f_*} \mathbf{L}_{Y}$ equals the Atiyah class of the pullback, i.e. $$\At_{f^*\FF}=f_* \circ f^*\At_\FF.$$
\subsection{Perfect obstruction theories}\label{pots}
Let $\N$ be any moduli space parametrising compactly supported stable sheaves $\E$ on a smooth threefold $X$ with fixed invariants $(\rk(\E),c_1(\E),c_2(\E,),c_3(\E))$.
\subsection{Virtual cycles}
A closed point $[\E]\in \N$ corresponding to a stable sheaf $\E$ on $X$ has deformations given by $\Ext^1(\E,\E)$ and obstructions by $\Ext^2(\E,\E)$. We call the integer $$\vd:= \ext^1(\E,\E)-\ext^2(\E,\E)=1-h^3(\OO_X)-\int_X \mathrm{ch}(\E^\vee)\mathrm{ch}(\E)\mathrm{Td}_X $$ the \textit{virtual} or \textit{expected dimension} of $\N$. This number is indeed constant on $\N$, as the integral on the RHS depends only the Chern classes of $X$ and $\E$. Furthermore, as $\mathrm{supp}(\E)\subset X$ is compact, the integral is well-defined. \\
Globally, we want to find a $2$-term complex of vector bundles on $\N$ $$V^\bullet=[V^{-1}\rightarrow V^{0}]$$  such that $h^0(V^{\bullet,\vee}|_{[\E]})=\Ext^1(\E,\E)$ and $h^{1}(V^{\bullet,\vee}|_{[\E]})=\Ext^2(\E,\E)$.\\
The right data relating extrinsic obstructions (those coming from $V^{\bullet,\vee}$) with intrinsic ones (coming from $\mathbf{L}_\N^\vee$) is a \textit{perfect obstruction theory}.\footnote{We omit the definition of a more general \textit{obstruction theory} which is stated in \cite[Def.4.4]{BF}.}
\begin{dfn}
	A perfect obstruction theory on $\N $ is a pair $(V^\bullet,\psi)$ consisting of
	\begin{itemize}
		\item a $2$-term complex $V^\bullet=[V^{-1}\rightarrow V^0] \in \mathbf{D}^{[-1,0]}(\N)$ of vector bundles
		\item a morphism $\psi: V^\bullet \rightarrow \mathbf{L}_\N$ in $\mathbf{D}^b(\N)$, such that $h^0(\psi)$ is an isomorphism and $h^{-1}(\psi)$ onto. 
	\end{itemize}		
\end{dfn}
\begin{rmk}\label{virtualtangentbundle}
	The complex $V^{\bullet,\vee}$ is often called the \textit{virtual tangent bundle}, denoted by $\mathbf{T}_{\N}^{vir}$. 
	Equipping $\N$ with a perfect obstruction theory $(V^\bullet, \psi)$ is called a \textit{virtually smooth} structure and we remark that virtual smoothness depends on the choice of $V^\bullet$.
\end{rmk}
 The moduli space $\N$ then inherits a  \textit{virtual cycle} $$[\N]^{vir}_{V^\bullet}:=0^{!}_{V_1}[\mathrm{C}]\in A_{\mathrm{vd}}(\N),$$ in the Chow group $A_*(\N)$, where $0^{!}_{V_1}$ is the Gysin map for $V_1=V^{-1,\vee}$ and $\mathrm{C}$ is the Behrend-Fantechi cone, which is a closed subcone of $V_1$, as defined in  \cite[Def.5.2]{BF}. \\ $[\N]_{V^\bullet}^{vir}$ is called the \textit{virtual fundamental class} of $\N$ with respect to $V^\bullet$, which depends on the perfect obstruction theory up to isomorphism in $\mathbf{D}^b(\N)$. 
\begin{rmk}
	If $X$ is a threefold of Calabi-Yau type, then the existence of a perfect obstruction complex $V^\bullet$ equips $\N$ with a virtual cycle of dimension 0, since by Serre-duality on $X$ we have $$\Ext^1(\E,\E)\cong \Ext^2(\E,\E)^*,$$ thus 
	 $\vd=0$. \\
	Such a perfect obstruction theory  is called \textit{symmetric} (i.e. deformations are dual to obstructions). If $\N$ was compact, one could integrate over $[\N]^{vir}$, which would give an actual sheaf count $$\int_{[\N]^{vir}}1 \in \mathbf{Z},$$
	a \textit{virtual Euler characteristic} of $\N$, where the final example below motivates this terminology.
\end{rmk}
\begin{ex}
	Although the moduli space $\N$ of this article is neither smooth nor a local complete intersection, we state a few examples under these assumptions:\\
	Any smooth variety $\N$ admits a natural perfect obstruction theory given by $[0 \rightarrow \Omega_{\N}]$. In that case, $[\N]^{vir}=[\N]$ agrees with the usual fundamental class. Similarly, if $\N$ is a local complete intersection, $\mathbf{L}_\N=[\mathcal{I}/\mathcal{I}^2\rightarrow \Omega_{\mathcal{A}}|_\N]$ is perfect and one can set $V^\bullet:=\mathbf{L}_\N$ with $\psi$ being the identity.\\
	We end this section with an alternative example of a zero-dimensional virtual cycle on a smooth projective $\N$: namely in this case, we may define $V^\bullet :=[\Omega_{\N} \xrightarrow{0} \Omega_{\N}]$ with the obvious map to $\mathbf{L}_\N=\Omega_{\N}$. It can then be shown that $$[\N]_{V^\bullet}^{vir}=e(\Omega_{\N}^\vee)\cap [\N],$$
	which integrates to the holomorphic Euler characteristic  $\chi(\N)\in \mathbf{Z}$.\footnote{See \cite[Prop.7.3]{BF}.} 
\end{ex}

\section{Setup}

\subsection{Setup}
Let $S$ be a smooth projective surface over $\mathbf{C}$ with polarisation $\OO_S(1)$. We denote by $X:= \mathrm{Tot}(K_S) \xrightarrow{\pi} S$ the total space of the canonical sheaf $K_S$ with structure map $\pi$.
\subsection{The category $\mathbf{Higgs}(S)$}
We consider $K_S$-Higgs pairs $(E,\phi)$ where $E$ is a coherent sheaf on $S$ of $\rk(E)=r$ together with a $K_S$-twisted endomorphism $\phi:E\rightarrow E \otimes K_S$. A morphism $f$ between Higgs pairs $(E',\phi')$ and $(E,\phi)$ is a commutative diagram 
\begin{equation}
	\begin{tikzcd}
	E' \arrow[r,"\phi'"] \arrow[d,"f "] & E'\otimes K_S \arrow[d,"f\otimes 1"] \\
	E \arrow[r,"\phi"] & E \otimes K_S.
	\end{tikzcd}
\end{equation}
Taking kernels and co-kernels of the columns defines kernels and co-kernels of Higgs pairs, which form the abelian category $\mathbf{Higgs}(S)$.
\subsection{Higgs bundles and their spectral sheaves}\label{Higgs+spectral}
Instead of working with Higgs pairs $(E,\phi)$, we consider their \textit{spectral sheaves} $\E_\phi$ on $X$, which are built as follows: Over a point $s\in S$, we attach the eigenspaces of $\phi_s$ acting on $E_s$ to their eigenvalues on the fibre $X_s\cong \mathbf{A}^1_\mathbf{C}$.\\
Globally on $S$, we make $E$ into a $\pi_*\OO_X=\oplus_i K_S^{-i}-$module via $$E\otimes K_S^{-i} \xrightarrow{\phi^{i}} E. $$ This gives a torsion sheaf $\E_\phi$ on $X$, preserving stability (definition recalled below) defining an equivalence of categories $$\mathbf{Coh}_c(X)\cong \mathbf{Higgs}(S)$$ (see \cite[Prop.2.2]{TT}) between Higgs pairs on $S$ and coherent sheaves on $X$ of compact support, where the arrow from right to left is the spectral construction. \\
Conversely, starting with a compactly supported coherent sheaf $\E$ on $X$, its push-down $E:=\pi_*\E$ is a torsion free, coherent sheaf and we get $\phi:=\pi_*(\tau\cdot \id)$ from the action of $\tau\cdot \id $ on $X$ for the tautological section $\tau \in \pi^*K_S$.
\subsection{ Gieseker Stability}\label{Gieseker}

A Higgs pair $(E,\phi)$ on $S$ is \textit{Gieseker stable} with respect to $\OO_S(1)$ if  
\begin{equation}\label{stab}
\frac{\chi(F(n))}{\mathrm{rank}(F)} < \frac{\chi(E(n))}{\mathrm{rank}(E)} \; \textrm{for}\;  n\gg 0,
\end{equation} and all $\phi$-invariant proper non-zero subsheaves $F \subset E$.\\
For the stability of $\E_\phi$ on $X$, we introduce the following notation:
\begin{dfn}
	For a spectral sheaf $\E_\phi$ on $X$, we denote by $r(\E_\phi)$ the leading coefficient of the Hilbert polynomial of $\E_\phi$, which agrees with the one of $E$: Indeed, as $\pi_*(\E_\phi(n))= \pi_*\E_\phi\otimes \OO(n)=E(n)$, we have $\chi(\E_\phi(n))=\chi(\pi_*\E_\phi(n))=\chi(E(n))$ and we can write $$r(\E_\phi)=\mathrm{rank}(E)\int_S h^2=\mathrm{rank}(E)\deg(S).$$
\end{dfn}
A Gieseker stable Higgs pair $(E,\phi)$ with respect to $\OO_S(1)$ is equivalent to a Gieseker stable spectral sheaf $\E_\phi$ with respect to the polarisation defined by $\pi^*\OO_S(1)$ on $X$. This is the condition
\begin{equation}
\frac{\chi(\mathcal{F}(n))}{r(\mathcal{F})} < \frac{\chi(\E_\phi(n))}{r (\E_\phi)} \; \textrm{for}\;  n\gg 0,
\end{equation}
for all proper non-zero subsheaves $\mathcal{F} \subset \E_\phi$. Here and for the rest of this discussion, we denote $\OO_X(1):=\pi^*\OO_S(1)$. \\
 \subsection{Chern classes} \label{chern classes}
  The Chern classes for $\E_\phi$ on $X$ are related to those of $(E,\phi)$ on $S$ by  Grothendieck-Riemann-Roch for $(E,\phi=0)$ on $S$, which we identify with a spectral sheaf $\E_{0}=i_*E$, supported on $S$ via push-forward along the zero section $i: S\hookrightarrow X$. \\
 We will restrict entirely to the case $\rk(E)=2$ in this discussion.\\
 Then $\mathrm{ch}(\E_\phi)=i_*(\mathrm{ch}(E_\phi) \cdot \mathrm{td}(T_i))$ where $\mathrm{td}(T_i)=\mathrm{td}(K_S)^{-1}$, which gives \\
 $c_1(\E_\phi)=2[S] $ for the cycle class $ [S] \in H^2(X,\mathbf{Z}),$\\
 $c_2(\E_\phi)=i_*(-3c_1(S)-c_1),$\\
 $c_3(\E_\phi)=i_*(c_1^2-2c_2+3c_1\cdot c_1(S)+4c_1(S)^2)$,\\
 where $c_1(S)=-c_1(K_S)$. 
 \subsection{Slope stability} Using the same notation as above, a Higgs pair $(E,\phi)$ is slope-stable if 
 \begin{equation}
     \mu(F) < \mu(E).
 \end{equation}
 where $\mu(E):=\frac{\deg(E)}{\rk(E)}$, where $\deg(E)$ is the degree of the cycle $c_1(E).H$. We further remark that if denominator and numerator are coprime, then Gieseker-stable $=$ slope stable.
\subsection{Flat families}\label{flatfamilies}
We are interested in moduli problems and \textit{families} of these sheaves and start with recalling the definition of flat families of spectral sheaves $\E_\phi$ on $(X,\OO_X(1))$. 
\begin{rmk}
	We remark first that fixing $ch(\E_\phi)$ as above fixes its Hilbert polynomial  $P(z) \in \mathbf{Q}[z]$ by Riemann-Roch.
\end{rmk}
\begin{dfn}
	For a fixed polynomial $P$ and for $T$ a Notherian separated scheme, a $T$-flat family of spectral sheaves is a coherent sheaf $\tilde{\E}$ on $X\times T$, such that for all $t\in T$, $\tilde{\E}(t):= \tilde{\E}|_{X_t}$ is a stable and flat spectral sheaf on $X$ with Hilbert polynomial $P$. \\
	We call $\tilde{\E}$ an \textit{Artinian family}, if $T=\mathbf{Spec}(A)$ for a local Artinian $\mathbf{C}$-algebra $A$ and use the notation $\tilde{\E}=\E_A$ here. This will be used in Sec. \ref{Artinian}.
\end{dfn} 
\begin{rmk}\label{fixedch}
	 Fixing only the Hilbert polynomial $P$ of $\E_\phi$ admits finitely many choices for $\mathrm{ch}(\E_\phi)$. For the rest of this discussion, we fix $\mathrm{ch}(\E_\phi)$ and thus $P$.
\end{rmk}
\subsection{Moduli spaces}\label{modulispaces}
We start with the definition of the moduli functor $\curly{N}_X^P$.
\begin{dfn}
	We define the functor 
	\begin{align*}
		\curly{N}_X^P:( \mathbf{Sch}/\mathbf{C})^{op} \rightarrow \mathbf{Sets}
	\end{align*}
	as follows: For a scheme $T$, we define
	\begin{align*}
		\curly{N}_X^P(T):=\{& \text{isomorphism classes of}\; T\text{-flat families} \;  \tilde{\E} \\& \; \text{over } X\times T \; \text{such that for all} \; t \in T, \\ &\tilde{\E}(t) \;  \text{is stable with Hilbert polynomial} \; P \}/{\sim},
	\end{align*}
	where $\tilde{\E} \sim \tilde{\E}'$ if and only if $\tilde{\E}\cong \tilde{\E}' \otimes p_T^*L$, for some line bundle $L$.\\
	If $f: T'\rightarrow T$ is a morphism of schemes, we define the pull-back  
	\begin{align*}
			\curly{N}_X^P(f): \curly{N}_X^P(T) &\rightarrow \curly{N}_X^P(T')\\ [\tilde{\E}] &\mapsto [(\id_X\times f)^*\tilde{\E}]
	\end{align*}

\end{dfn}
\begin{dfn}
	We recall from \cite[Sec. 4]{HL} that the functor  $\curly{N}_X^P$ admits a coarse moduli space $\N^P_X$. Only fixing $P$, this space might have several components and we choose the component $\N_X(0,c_1,c_2,c_3)$ corresponding to $ch(\E_\phi)$ of Rmk. \ref{fixedch}, which we will simply abbreviate by $\N$.
\end{dfn}
\begin{rmk}
	$\N$ is a quasi-projective scheme, whose points parametrise isomorphism classes of stable rank-0 sheaves $[\E_\phi]$ on $X$ with the invariants $c_i(\E_\phi)$ of \ref{chern classes}. We remark that the stability of a sheaf $\E_\phi$ implies it is simple, i.e. $\mathrm{Aut}(\E_\phi)\cong \mathbf{C}^\times$. \\
	Alternatively, this is the moduli space $ \N=\N_S(2,c_1,c_2)$ of Higgs sheaves $(E,\phi)$ on $S$. We note that due to the $\mathbf{C}^\times$-action given by $ (E,\phi) \mapsto (E,\lambda \phi)$, $\N$ is non-compact.\footnote{In terms of $\E_\phi$, this is the action coming from scaling the fibres of $X\rightarrow S$.}
\end{rmk}
\subsection{Universal sheaves}\label{universalsheaves}
Although $\N$ is in general not necessarily a fine moduli space and hence does not admit a universal family, it always admits a \textit{twisted} universal family $\EE \; \text{over} \; X\times \N$.
Here, $\EE$ is locally well-defined, but might not glue to a sheaf on all of $X\times \N$, due to $\mathbf{C}^\times$-automorphisms.\footnote{We refer to \cite[p.26-37]{C} for derived categories of twisted families.}
\begin{rmk}\label{classifyingmap}
	We remark that the sheaves we are mostly interested in are the cohomology sheaves $\Extt_{p_X}^{i}(\EE,\EE)$, which always exist globally, independent of any choices, see e.g. \cite[Sec. 10.2]{HL}.\\
	Under the spectral construction, $\EE$ is equivalent to a twisted universal Higgs sheaf $(\mathsf{E},\Phi)$ over $S\times \N$. Here, $\mathsf{E}=\pi_*\EE$ is now a twisted family over $S\times \N$ and $\Phi$ arises as the push-down of $\tau\cdot \id$, where $\tau$ is the tautological section of $\pi^*K_S\otimes \OO$ over $X\times\N$. As $\pi$ is affine, the flatness of $\EE$ implies the one of $\mathsf{E}$. This defines a classifying map "forgetting $\phi$", $$\Pi: \N \rightarrow \M,$$ on closed points given by $[\E_\phi]\mapsto [\pi_*\E_\phi]$ or equivalently $[(E,\phi)] \mapsto [E]$. Here, $\mathcal{M}$ is the moduli stack of coherent sheaves on $S$ with the Chern classes $c_i(E)$ from above. We observe that the fibres of this map are linear, i.e. at $[E] \in \M$ they are given by $\Hom(E,E\otimes K_S)$.
\end{rmk}

\subsection{Deformation theory}\label{TTtriangle}
There is an exact triangle $$\mathbf{R}\mathcal{H}om_{p_X}(\EE,\EE) \xrightarrow{\pi_*} \mathbf{R}\mathcal{H}om_{p_S}(\mathsf{E},\mathsf{E}) \xrightarrow{[\_,\Phi]} \mathbf{R}\mathcal{H}om_{p_S}(\mathsf{E},\mathsf{E}\otimes K_S) \xrightarrow[{[1]}]{\partial}$$
that we briefly explain at single points $[\E_\phi] \in \N$: The first map $\pi_*$ relates deformations of $\E_\phi$ on $X$ with the one of $\pi_*\E_\phi=E$ on $S$. The second arrow is given by the bracket $g \mapsto g\circ \phi - \phi\circ g$ which parametrises deformations of the Higgs field $\phi:\pi_*\E_\phi \rightarrow \pi_*\E_\phi\otimes K_S$.  We remark that this diagram equals its own Serre dual (i.e. replacing all objects by their duals gives the same triangle, just shifted) and refer to [\cite{TT}, pp.16-19] for a proof of these statements.

\section{The involution}
\setcounter{subsection}{-1}
\subsection{Summary}
We will define an involution $\iota:\N\rightarrow \N$ extending the classical map $$(E,\phi) \mapsto (E^*,-\phi^*)$$ to all torsion free pairs Higgs pairs $(E,\phi)$ on $S$. This will be phrased as $$\iota: \E_\phi \mapsto \sigma_{{\tr \phi}}^*\E_\phi \otimes \pi^*\det(\pi_*\E_\phi)^{-1}$$  for their corresponding spectral sheaves on $X$, where $\sigma_{{\tr \phi}}$ translates the points in the fibres of $\pi:X \rightarrow S$ by $\tr\phi$. We will define this as a functor of families of spectral sheaves, such that $\iota$ above is its classifying map.
\subsection{The involution for vector bundles}
\begin{dfn}
    Assuming $\rk(E),\deg(E)$ to be coprime, let $(\mathsf{E},\Phi)$ be a family parametrising Higgs bundles $(E,\phi)$ on $S$. As each $E$ is locally free, so is $\mathsf{E}$, thus $\mathsf{E^{*,*}}\cong \mathsf{E}$ holds and $$(\mathsf{E},\Phi) \mapsto (\mathsf{E}^*,-\Phi^*)$$
	defines an involution. 
\end{dfn}
\begin{rmk}
	Assuming $\N$ consists of pairs with all $E$ locally free (or shrink it such that it does) and that $(E^*,-\phi^*)$ is also stable, then fixing the invariants $(2,c_1,c_2)$ for $(E,\phi)$ as in Sec. \ref{chern classes}, this defines a classifying map $$\N(2,c_1,c_2) \rightarrow \N(2,-c_1,c_2)$$
	with square equal to the identity, which is on closed points given by $$[(E,\phi)] \mapsto [(E^*,-\phi^*)].$$
    As in this case Gieseker = slope stable, it is enough to check that this map preserves slope stability: Ineed, let $(E,\phi)$ be stable and let $E^* \rightarrow Q$ be a Higgs quotient, $Q^*\subset E$ and $\mu(Q)=-\mu(Q^*)>-\mu(E)=\mu(E^*)$, so $(E,-\phi^*)$ is indeed stable.
\end{rmk}
We will modify this involution in the next part and furhermore, as we are interested in the locus of $\N$ where $\det(E)\cong \OO_S$, it is sufficient to restrict entirely to $$c_1(E)=0$$ for the rest of this discussion. In this case, the classifying map is a genuine involution on $\N(2,0,c_2)$. 
In order to generalise to torsion free $E$ and hence to all of $\N$, we need some linear algebra:
\subsection{Skew maps }\label{locallyfree}
The reader only interested in the results of this rather technical subsection might jump directly to the final Rmk. \ref{rmkgeneraliota}.\\ 
Let $\alpha:E\rightarrow E^*$ be a map from a locally free rank $2$ sheaf $E$ to its dual. Then $\alpha \mapsto \alpha^*$ defines an involution on $\Homm(E,E^*)$, i.e. splits $$\Homm(E,E^*)\cong \mathcal{S}ym(E^*)\oplus \wedge^2 E^*$$ into $\pm1 $ eigenspaces: sections of the former are self-dual maps $\alpha=\alpha^*$ and sections of the latter skew maps $\alpha^*=-\alpha$. \\
As $E$ is rank $2$, a section $\alpha$ of $ \wedge^2E^*$  defines an isomorphism whenever it is non-zero, which gives a canonical identification $$E \otimes( \wedge^2E^*) \xrightarrow{\sim} E^*$$  by evaluation. \\
We explain sections of $ \wedge^2 E^*$ in more detail: Fix an open $U\subset S$. A section $\alpha=\alpha_1\wedge \alpha_2$ of $ \wedge^2 E^*(U) $ becomes a skew map $E(U) \rightarrow E^*(U)$ via $$[\alpha_1\wedge \alpha_2] (e) :=\alpha_1(e)\alpha_2-\alpha_2(e)\alpha_1\in E^*(U)$$ for $e \in E(U)$.\\
We need the following fact relating trace and wedge product and refer the reader to \cite[pp.111-112]{FH} for a proof:
\begin{lem}\label{Fulton}
	Let $E$ be a vector bundle of rank $r$ and $\phi: E \rightarrow E\otimes K_S$. Then $\sum_i e_1\wedge \dots \wedge \phi(e_i)\wedge \dots \wedge e_r =\tr(\phi) e_1\wedge \dots \wedge e_r$ for sections $e_i$ of $E$. 
\end{lem}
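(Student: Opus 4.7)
The plan is to reduce the identity to a direct computation in a local frame, exploiting multilinearity and alternation to extend from frame sections to arbitrary sections. Both sides of the asserted equality are sections of $(\det E)\otimes K_S$, and the assertion is local on $S$, so I may restrict to an open $U\subset S$ on which $E$ trivialises with a local frame $f_1,\ldots,f_r$.

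On such a trivialising open, write $\phi(f_i)=\sum_j a_{ij}\, f_j$, where the coefficients $a_{ij}$ are sections of $K_S$; by definition of trace, $\tr(\phi)=\sum_i a_{ii}$. Expanding $f_1\wedge\cdots\wedge\phi(f_i)\wedge\cdots\wedge f_r=\sum_j a_{ij}\,f_1\wedge\cdots\wedge f_j\wedge\cdots\wedge f_r$, every summand with $j\neq i$ contains a repeated frame vector and hence vanishes. The only surviving term is $j=i$, contributing $a_{ii}\,f_1\wedge\cdots\wedge f_r$. Summing over $i$ yields $\bigl(\sum_i a_{ii}\bigr)f_1\wedge\cdots\wedge f_r=\tr(\phi)\,f_1\wedge\cdots\wedge f_r$, which establishes the identity on the frame sections.

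To handle arbitrary sections $e_1,\ldots,e_r$, I would observe that both sides are $\OO_S$-multilinear in the arguments and alternating: for the right hand side this is immediate, and for the left hand side it follows because a transposition of two entries $e_i,e_j$ flips the sign of each summand (this is the usual fact that the operator $D_\phi\colon e_1\wedge\cdots\wedge e_r\mapsto\sum_i e_1\wedge\cdots\wedge\phi(e_i)\wedge\cdots\wedge e_r$ is a well-defined derivation on the exterior algebra with values in $K_S$). Consequently, both sides factor through $\wedge^r E$ as sections of $(\det E)\otimes K_S$, and agreement on the frame sections $f_1\wedge\cdots\wedge f_r$ forces agreement on every decomposable element and thus on all of $\det E$ over $U$. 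Gluing over a trivialising cover of $S$ gives the global identity.

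I do not expect a genuine obstacle: the proof is essentially the standard derivation argument for the trace on $\det E$. The only mildly delicate point is verifying the alternating property of the left hand side before invoking the factorisation through $\wedge^r E$, but this is forced by the computation on a frame together with multilinearity, so it poses no real difficulty.
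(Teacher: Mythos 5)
Your proof is correct. Note that the paper does not actually prove this lemma: it simply cites Fulton--Harris \cite[pp.\,111--112]{FH}, so there is no in-paper argument to compare against. Your local-frame computation --- expand $\phi(f_i)=\sum_j a_{ij}f_j$ with $a_{ij}$ sections of $K_S$, kill the off-diagonal terms by repetition of a frame vector, and then extend to arbitrary sections via $\OO_S$-multilinearity and the alternating property of the operator $e_1\wedge\cdots\wedge e_r\mapsto\sum_i e_1\wedge\cdots\wedge\phi(e_i)\wedge\cdots\wedge e_r$ --- is exactly the standard derivation argument underlying the cited reference, and it fills in the omitted proof cleanly. The one point you flag as delicate, the alternating property of the left-hand side, does hold: for a tuple with $e_i=e_j$ the summands applying $\phi$ elsewhere vanish outright, while the two summands applying $\phi$ to the $i$-th and $j$-th slots cancel in pairs, so the factorisation through $\wedge^r E$ is legitimate.
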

Now let $\alpha:E(U)\rightarrow E^*(U)$ be skew-map over $U$ corresponding to  a section $\alpha_1\wedge \alpha_2 \in \wedge^2 E^*(U)$. 
\begin{lem}\label{key lemma}
	We have
	$$\alpha\phi-(\alpha\phi)^*=\tr(\phi)\alpha$$
	as elements of $E^*\otimes E^*\otimes K_S(U)$.
\end{lem}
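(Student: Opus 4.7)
The plan is to reduce the equality to a one-dimensional comparison, exploiting the rank-$2$ hypothesis, and then to invoke the Fulton--Harris lemma (\ref{Fulton}).

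The key observation is that both sides of the claimed identity live in the rank-one subbundle $\wedge^2 E^* \otimes K_S \subset E^* \otimes E^* \otimes K_S$. Indeed, by construction $\alpha\phi - (\alpha\phi)^*$ is the skew-symmetrization of a tensor with respect to the swap of the two $E^*$ factors, and $\tr(\phi)\alpha$ is skew because $\alpha \in \wedge^2 E^*$. Since $E$ is of rank $2$, $\wedge^2 E$ and $\wedge^2 E^*$ are line bundles, and a section of $\wedge^2 E^* \otimes K_S$ is determined by its value under the natural pairing with any local non-vanishing section of $\wedge^2 E$. Hence it is enough to evaluate both sides against $e_1 \wedge e_2$ for a chosen local frame $e_1,e_2$ of $E$ over $U$.

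Next, I would unwind the definitions of $\alpha\phi$ and $(\alpha\phi)^*$ as bilinear forms on $E$ with values in $K_S$. Using $[\alpha_1 \wedge \alpha_2](\phi(e)) = \alpha_1(\phi(e))\alpha_2 - \alpha_2(\phi(e))\alpha_1$, a direct expansion yields
\begin{equation*}
\bigl(\alpha\phi - (\alpha\phi)^*\bigr)(e_1,e_2) = (\alpha_1 \wedge \alpha_2)\bigl(\phi(e_1),e_2\bigr) + (\alpha_1 \wedge \alpha_2)\bigl(e_1,\phi(e_2)\bigr),
\end{equation*}
which, under the pairing $\langle \alpha_1 \wedge \alpha_2,\, u \wedge v\rangle = \alpha_1(u)\alpha_2(v) - \alpha_1(v)\alpha_2(u)$, is nothing but $\langle \alpha_1 \wedge \alpha_2,\, \phi(e_1) \wedge e_2 + e_1 \wedge \phi(e_2)\rangle$.

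Finally, specialising Lemma \ref{Fulton} to $r=2$ gives the identity $\phi(e_1) \wedge e_2 + e_1 \wedge \phi(e_2) = \tr(\phi)(e_1 \wedge e_2)$ in $\wedge^2 E \otimes K_S$. Pairing with $\alpha = \alpha_1 \wedge \alpha_2$ therefore yields $\tr(\phi)\cdot \alpha(e_1,e_2)$, matching the evaluation of $\tr(\phi)\alpha$ against $e_1 \wedge e_2$. By the rank-one argument of the first paragraph this concludes the proof. The only mildly delicate point is keeping track of signs in the expansion of the swap $(\alpha\phi)^*$; once that bookkeeping is done, the Fulton--Harris lemma does all the work.
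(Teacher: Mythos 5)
Your proof is correct and follows essentially the same route as the paper: a direct expansion of the skew map $[\alpha_1\wedge\alpha_2]$ followed by an application of Lemma \ref{Fulton}. The only (cosmetic) difference is that you apply the trace--wedge identity to $\phi$ acting on $e_1\wedge e_2\in\wedge^2E$, whereas the paper first rewrites $(\alpha\phi)^*=-\phi^*\alpha$ using skewness of $\alpha$ and applies the identity to $\phi^*$ acting on $\alpha_1\wedge\alpha_2\in\wedge^2E^*$; these are dual formulations of the same computation.
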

\begin{proof}
	As $\alpha$ is skew, we only need to show $ \alpha\phi- (-\phi^*\alpha)=\tr(\phi)\alpha $.\\
	Indeed, for $e \in E(U)$ and as $\alpha^*=-\alpha$, the LHS equals
	\begin{align*}
	(\alpha\phi +\phi^*\alpha)(e)&= (( \phi^*\alpha_1)(e)\alpha_2-\alpha_2(e)\phi^*\alpha_1)+(\alpha_1(e)\phi^*\alpha_2-(\phi^*\alpha_2)(e)\alpha_1)\\&= [\phi^*\alpha_1\wedge \alpha_2 + \alpha_1\wedge \phi^*\alpha_2](e) \\&= [\tr(\phi^*)\alpha_1\wedge \alpha_2](e)= \tr(\phi)[\alpha_1\wedge \alpha_2](e) =\tr(\phi)\alpha(e)
	\end{align*}
	where we used Lem. \ref{Fulton} in the equality between second and third line.
\end{proof} 
Lastly, we find:
\begin{cor}{\label{torsionfree}}
	The following diagram 
	$$
	\begin{tikzcd}[column sep=14ex]
	E \otimes \bigwedge ^2E^*\arrow[r,"(\phi-\tr(\phi)\cdot \id)\otimes1 "] \arrow[d, "\cong"]  & E \otimes K_S\otimes  \bigwedge^2E^*  \arrow[d,"\cong"] \\ 
	E^* \arrow[r,"-\phi^*"] & E^*\otimes K_S \\ 
	\end{tikzcd}
	$$
	commutes.
\end{cor}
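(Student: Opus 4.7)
The plan is to verify the commutativity locally over $S$, reducing it to the identity of Lemma \ref{key lemma}. Pick an open $U \subset S$ on which $\wedge^2 E^*$ admits a nowhere-vanishing section $\alpha = \alpha_1 \wedge \alpha_2$; then on $U$ the vertical isomorphisms of the diagram are given by the skew map $[\alpha_1\wedge\alpha_2] : E \rightarrow E^*$ defined above, respectively tensored with $\id_{K_S}$ on the right-hand vertical. It then suffices to chase an arbitrary $e \otimes \alpha \in E(U) \otimes \wedge^2 E^*(U)$ around the two paths and compare the outputs in $E^*(U)\otimes K_S$.

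Going right then down, $e \otimes \alpha$ is first sent to $(\phi - \tr(\phi)\cdot \id)(e) \otimes \alpha \in E(U) \otimes K_S \otimes \wedge^2 E^*(U)$, and then the right vertical applies $\alpha$ to the $E$-factor, yielding
\begin{equation*}
(\alpha\phi)(e) \;-\; \tr(\phi)\,\alpha(e).
\end{equation*}
Going down then right, $e \otimes \alpha$ first becomes $\alpha(e) \in E^*(U)$, which is then sent to $-\phi^*(\alpha(e)) = -(\phi^*\alpha)(e)$. Commutativity of the square therefore reduces to the identity
\begin{equation*}
\alpha\phi \;+\; \phi^*\alpha \;=\; \tr(\phi)\,\alpha
\end{equation*}
of maps $E \rightarrow E^*\otimes K_S$ over $U$.

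Since $\alpha$ is skew, $(\alpha\phi)^* = \phi^*\alpha^* = -\phi^*\alpha$, so the required identity is precisely $\alpha\phi - (\alpha\phi)^* = \tr(\phi)\,\alpha$, which is the content of Lemma \ref{key lemma}. The main obstacle, such as it is, is really just a bookkeeping one: one has to keep track of where the passive $K_S$ and $\wedge^2 E^*$ factors sit during the chase and observe that the identification $E\otimes \wedge^2 E^*\cong E^*$ is natural in the choice of local trivialising section, so that the local verification glues over an open cover of $S$. No deeper input beyond Lemma \ref{key lemma} is needed.
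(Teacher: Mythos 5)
Your proposal is correct and follows essentially the same route as the paper: chase $e\otimes\alpha$ around both sides of the square and reduce the resulting identity $\alpha\phi+\phi^*\alpha=\tr(\phi)\alpha$ to Lemma \ref{key lemma} via the skewness of $\alpha$. The only difference is that you spell out the local trivialisation and gluing bookkeeping, which the paper leaves implicit.
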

\begin{proof}
	Indeed, for $e\otimes \alpha$, going down the LHS side gives $-\phi^*\alpha(e)$. On the RHS, we get $\alpha(\phi-\tr(\phi)\cdot \id)(e)=\alpha\phi(e)-\tr(\phi)\alpha(e)=-\phi^*\alpha(e)$ by Lem. \ref{key lemma}. 
\end{proof}
\begin{rmk}\label{rmkgeneraliota}
	This implies that $$(E^*,-\phi^*) \cong (E\otimes \wedge^2E^*,(\phi-\tr(\phi)\cdot \id)\otimes 1)$$ are isomorphic Higgs bundles. Splitting $$\mathcal{E}nd(E)\otimes K_S=(\mathcal{E}nd_0(E)\otimes K_S)\oplus K_S\cdot \id,$$ we can write $\phi=\phi_0 \oplus \frac{1}{2}\tr(\phi)\cdot \id$ and see that $ \phi -\tr(\phi)\cdot \id=\phi_0\oplus- \frac{1}{2}\tr(\phi)\cdot \id$.
\end{rmk}
	\begin{rmk}
		Using this, we may redefine the involution
		$$(E,\phi) \mapsto (E^*,-\phi^*)$$ on Higgs bundles (again because locally frees are reflexive) as 
		\begin{equation*}
		(E,\phi) \mapsto (E\otimes \wedge^2E^*,(\phi-\tr(\phi)\cdot \id) \otimes 1)
		\end{equation*}
		under the isomorphism stated in Cor. \ref{torsionfree}. This observation allows us now to generalise $\iota$ to torsion free sheaves:
	\end{rmk}
	\subsection{Torsion free sheaves}
	\begin{dfn}
		A coherent sheaf $E$ on $S$ is torsion free, if all of its torsion sections $e$ are zero: For an open $U\subset S$, a section $e$ of $E(U)$ is called \textit{torsion} if there is some $s\in U$ such that the image of $e$ in the local $ \OO_s$-module $E_s$ has torsion. Equivalently, $e$ is torsion if the image of $e$ in $E_\eta$ is zero, where $\eta$ denotes the generic point of $S$.\footnote{We remark that $\eta$ exists, as $S$ is integral and therefore admits a generic point.} \\
		We define $\rk(E)$ generically, i.e. it is the rank of the $\OO_\eta$-module $E_\eta$.
	\end{dfn}
	As $E$ is not necessarily locally free, we need the notion of homological dimension:
	\begin{dfn}
		For a point $s\in S$ and any $\OO_s$-module $M$, we define its homological dimension $\mathrm{dh}(M)$ as the minimal length of a projective resolution of $M$. For a coherent sheaf $E$ on $S$, we define the homological dimension to be
		 $$\mathrm{dh}(E):=\mathrm{max}\{\mathrm{dh}(E_s): s \in S \}.$$
	\end{dfn}

Now let $E$ be torsion free, then we have \footnote{See \cite[pp.4-6]{HL} for the a precise introduction to resolutions of sheaves and their homological dimension.}
	$$\mathrm{dh}(E) \leq \dim(S) -1.$$
	As $S$ is a surface, we have $\mathrm{dh}(E) \leq 1$ and as $S$ is smooth, the projective resolution can be chosen to consist of locally frees. \\
	Thus either $E$ itself is locally free or $\mathrm{dh}(E)=1$, i.e. there is a $2$-step resolution $E^{-1}\hookrightarrow E^0 \twoheadrightarrow E$ of locally frees $E^{i}$.
	\begin{dfn}\label{determinantbundle}
		We define the determinant bundle for a torsion free $E$ as $$\det(E):=\det(E^0)\otimes \det(E^{-1})^{-1},$$
		which is independent of the choice of resolution and agrees if $\rk(E)=2$ with $\wedge^2(E)$ whenever $E$ is locally free, see e.g. \cite[pp.149-152]{K}. 
	\end{dfn}
	\subsection{The generalised involution}
	This allows us a generalisation of $\iota$: 
	\begin{dfn}\label{geninvol}
		Denoting $\phi^{\mathfrak{t}}:=\phi-\tr(\phi)\cdot \id$, we can extend the involution $$\iota:(E,\phi) \mapsto (E^*,-\phi^*)$$ of \textit{Higgs bundles} to all torsion free \textit{Higgs sheaves} on $S$ by the formula 
		$$\iota: (E,\phi) \mapsto (E\otimes \det(E)^{-1},\phi^{\mathfrak{t}} \otimes 1),$$
		which now makes sense for all rank $2$ torsion frees and extends the original involution defined for Higgs bundles by the diagram stated in Cor. \ref{torsionfree}. We also see that the numerical invariants $(2,0,c_2)$ of $E$ are again fixed under $\iota$. 
	\end{dfn}
	\begin{rmk}\label{squareid}
		We remark that we actually have $\iota^2=\id$. Indeed, we note $\phi^{\mathfrak{t},\mathfrak{t}}=\phi$ and compute 
		\begin{align*}
		\iota^2(E,\phi)&=\iota (E\otimes \det(E)^{-1},\phi^\mathfrak{t}\otimes 1)\\
		&=(E\otimes \det(E)^{-1}\otimes \det(E \otimes \det(E)^{-1})^{-1},(\phi^\mathfrak{t}+\tr(\phi)\cdot \id)\otimes 1)\\&=(E,\phi).
		\end{align*}
	
	\end{rmk}
\subsection{The involution for spectral sheaves}
To define $\iota$ for their corresponding spectral sheaves $\E_\phi$ on $X$, we define the generalised involution directly in families $\EE$ and start with some definitions: 
\begin{dfn}\label{sigma}
	We may write a closed point $x\in X$ as $x=(s,t)$ with $s=\pi(x) \in S$, $t\in K_S|_s$.\footnote{In the complex analytic or étale topology. Alternatively, $X\rightarrow S$ is locally the cone $\mathbf{Spec}(A[z])$ over $\mathbf{Spec}(A)$ and $\sigma_{{\tr \phi}}$ is then given by the $A$-algebra homomorphism $z \mapsto z +\tr(\phi)$ which glues to all of $X$ because $\phi$ does.} For a fixed Higgs field $\phi$, we  define the \textit{trace shift} by $\phi$
	$$\sigma_{\tr\phi}: X \rightarrow X,$$ 
	as $$(s,t) \mapsto (s,t-\tr(\phi_s)).$$
We see that $\sigma_{\tr\phi}$ preserves the fibres of $X \xrightarrow{\pi} S$, i.e. $\pi \sigma_{\tr\phi}=\pi$. It is an invertible map on $X$ with inverse $\sigma_{-\tr\phi}$, acting on spectral sheaves $\E_\phi$ as $$ \E_\phi \mapsto \sigma_{\tr\phi}^* \E_\phi.$$ 
\end{dfn}
\begin{lem}\label{traceshiftspectral}
	We have $\sigma_{\tr\phi}^*\E_\phi=\E_{\phi^\mathfrak{t}}$, so $\sigma_{\tr\phi}^*\E_\phi$ is the spectral sheaf for the Higgs pair $(E,\phi^\mathfrak{t})$. 
\end{lem}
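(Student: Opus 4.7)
The approach is a direct computation via the two-step spectral resolution
$$0 \to \pi^*E \xrightarrow{\pi^*\phi - \tau\cdot\id} \pi^*E \otimes \pi^*K_S \to \E_\phi \to 0$$
on $X$, where $\tau \in H^0(X,\pi^*K_S)$ is the tautological section. The plan is to apply $\sigma^* := \sigma_{\tr\phi}^*$ to this sequence and identify the result as the spectral resolution of $\E_{\phi^{\mathfrak{t}}}$.

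First I would observe that $\pi \circ \sigma_{\tr\phi} = \pi$ by construction, so $\sigma := \sigma_{\tr\phi}$ is an $S$-automorphism of $X$. Hence $\sigma^*$ is exact and canonically fixes objects pulled back from $S$; in particular $\sigma^*\pi^*E \cong \pi^*E$ and $\sigma^*\pi^*K_S \cong \pi^*K_S$ under natural identifications. Applying $\sigma^*$ to the resolution therefore produces
$$0 \to \pi^*E \xrightarrow{\pi^*\phi - (\sigma^*\tau)\cdot\id} \pi^*E \otimes \pi^*K_S \to \sigma^*\E_\phi \to 0,$$
so the only new ingredient to determine is $\sigma^*\tau$.

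The heart of the proof is the identity $\sigma^*\tau = \tau + \pi^*\tr\phi$, which is immediate from the local description of $\sigma$ in the footnote of Def.~\ref{sigma}: writing $X|_{\mathrm{Spec}(A)} \cong \mathrm{Spec}(A[z])$, the section $\tau$ corresponds to $z$ and $\sigma$ to the $A$-algebra map $z \mapsto z + \tr\phi$, and this local equation glues to a global identity in $H^0(X,\pi^*K_S)$ because $\tr\phi$ is a global section of $K_S$. Substituting rewrites the middle differential as
$$\pi^*\phi - (\tau + \pi^*\tr\phi)\cdot\id \;=\; \pi^*(\phi - \tr\phi\cdot\id) - \tau\cdot\id \;=\; \pi^*\phi^{\mathfrak{t}} - \tau\cdot\id,$$
which is precisely the differential in the spectral resolution of $(E,\phi^{\mathfrak{t}})$. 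Comparing cokernels yields $\sigma^*\E_\phi \cong \E_{\phi^{\mathfrak{t}}}$ as required.

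No serious obstacle is anticipated; the argument is essentially formal once the outer terms of the resolution are recognised as $\sigma^*$-invariant. The only point requiring care is the book-keeping of the canonical identifications arising from $\pi\sigma=\pi$, so that the pulled-back sequence is literally a resolution with the same outer terms (not merely isomorphic ones) and only the middle differential modified. As an alternative, one can avoid the resolution altogether by working directly with the $\pi_*\OO_X$-module structure: since $\pi_*\sigma^* = \pi_*$, the underlying sheaf on $S$ is $\pi_*\sigma^*\E_\phi = E$, and the tautological action of $\tau$ on $\sigma^*\E_\phi$ pushes forward via the projection formula to the Higgs field $\phi^{\mathfrak{t}}$ on $E$, giving the same conclusion.
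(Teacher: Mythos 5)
Your argument is correct, and it reaches the paper's conclusion by a mildly different packaging of the same key computation. The paper works directly with the pushforward: it uses $\pi\circ\sigma_{\tr\phi}=\pi$ to get $\pi_*\sigma_{\tr\phi}^*\E_\phi=\pi_*\E_\phi=E$, and then identifies the new Higgs field by pushing down the tautological endomorphism, which amounts to exactly your identity $\sigma_{\tr\phi}^*\tau=\tau\pm\pi^*\tr\phi$; this is precisely the ``alternative'' you sketch in your last sentence. Your primary route instead pulls back the two-step cokernel presentation $\pi^*E\xrightarrow{\pi^*\phi-\tau\cdot\id}\pi^*E\otimes\pi^*K_S\to\E_\phi\to0$ and reads off the new differential. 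This buys a little extra rigour: only right-exactness of $\sigma_{\tr\phi}^*$ is needed to compare cokernels (and $\sigma_{\tr\phi}$ is an isomorphism anyway), so you never have to make precise the slightly delicate identification ``pushforward of the pulled-back endomorphism equals the Higgs field of the pulled-back sheaf'' that the paper's proof leans on. One point worth flagging: you take $\sigma_{\tr\phi}^*\tau=\tau+\pi^*\tr\phi$ from the footnote's algebra map $z\mapsto z+\tr\phi$, whereas the paper's own proof computes $(\sigma_{\tr\phi}^*\tau)_{(s,t)}=t-\tr(\phi_s)$ from the points-level formula $(s,t)\mapsto(s,t-\tr(\phi_s))$ of Def.~\ref{sigma}; these two descriptions of $\sigma_{\tr\phi}$ in the paper differ by a sign. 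Your choice is the one under which the eigenvalue check works (the skyscraper at $c\in\mathbf{A}^1$ must pull back to the skyscraper at $0$, which forces the map on points to be $t\mapsto t+\tr\phi$), so your bookkeeping is internally consistent where the paper's is not quite. The only cosmetic caveat is the exact twist by $\pi^*K_S$ in the outer terms of the presentation, which does not affect the comparison since the same twist appears on both sides.
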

\begin{proof}
	Let $\tau$ be the tautological section of $\pi^*K_S$ on $X$ and choose local coordinates $(s,t)$.
	Recall $\tau_{(s,t)}=t$ and $\pi_*(\tau\cdot \id)_s=\phi_s$. So $(\sigma_{\tr\phi}^*\tau)_{(s,t)}=\tau_{(s,t-\tr(\phi_s))}=t-\tr(\phi_s)$. Thus $\sigma_{\tr\phi}^*(\tau\cdot\id)$ acting on $\sigma_{\tr\phi}^*\E_\phi$ gives $$\pi_*(\sigma_{\tr\phi}^*(\tau \cdot \id))=\phi-\tr(\phi)\cdot \id=\phi^{\mathfrak{t}}.$$ 
	In addition we compute $\pi_*(\sigma_{\tr\phi}^*\E_\phi)=\pi_*\sigma_{\tr\phi,*}(\sigma_{\tr\phi}^*\E_\phi)=\pi_*(\sigma_{\tr\phi,*}\sigma_{\tr\phi}^*)\E_\phi=\pi_*\E_\phi=E $, which shows that $\sigma_{\tr\phi}^*\E_\phi$ is the spectral sheaf for the Higgs pair $(E,\phi^\mathfrak{t})$. 
\end{proof} \noindent
\begin{rmk}
$\sigma_{{\tr \phi}}$ acting on spectral sheaves $\E_\phi$ defines an involution: Indeed, we compute $$\sigma_{{\tr \phi}}^*(\sigma_{{\tr \phi}}^{*}\E_\phi)=\sigma_{{\tr \phi}}^*\E_{\phi^\mathfrak{t}}=\E_{\phi^\mathfrak{t,t}}=\E_\phi.$$
\end{rmk}

\begin{dfn}
	On $X\times \N$, we define $$(\sigma_{{\tr \phi}}\times \id): X\times \N \rightarrow X\times \N,$$ 
	$$ (x,[\E_\phi]) \mapsto (\sigma_{{\tr \phi}}(x),[\E_\phi]).$$
\end{dfn}
Choosing a (twisted) universal family $\EE$, we consider the functor $$ \EE \mapsto (\sigma_{{\tr \phi}}\times \id)^*\EE$$ and claim:
\begin{claim}
	 This functor admits a classifying map $\sigma: \N \rightarrow \N$ on closed points acting as $$[\E_\phi] \mapsto [\sigma_{{\tr \phi}}^*\E_\phi].$$
\end{claim}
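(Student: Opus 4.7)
The plan is to construct $\sigma$ via the universal property of the coarse moduli space $\N$, by promoting the assignment $\EE\mapsto (\sigma_{\tr\phi}\times\id)^*\EE$ to a natural transformation of the moduli functor $\curly{N}_X^P$. Given any $T$-flat family $\tilde{\EE}$ on $X\times T$ with spectral data $(\mathsf{E}_T,\Phi_T)$ on $S\times T$, the trace $\tr(\Phi_T)$ is a global section of $p_S^*K_S$ on $S\times T$. Because $X\times T \to S\times T$ is (fibrewise) a $K_S$-torsor, this section defines an automorphism $\sigma_{\tr\Phi_T}\times\id_T\colon X\times T \to X\times T$ over $S\times T$, with inverse given by the trace shift by $-\tr(\Phi_T)$. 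I would then declare the image of $[\tilde{\EE}]$ under $\sigma$ to be $[(\sigma_{\tr\Phi_T}\times\id_T)^*\tilde{\EE}]$.

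Next I would verify that this assignment lands in $\curly{N}_X^P(T)$. Flatness over $T$ is preserved because pullback by the automorphism $\sigma_{\tr\Phi_T}\times\id_T$ commutes with the projection to $T$. On each fibre $t\in T$ the construction restricts to $\sigma_{\tr\phi_t}^*\E_{\phi_t}$, which by \Cref{traceshiftspectral} is $\E_{\phi_t^{\mathfrak{t}}}$; in particular its push-forward under $\pi$ is again $E_t$, so the Hilbert polynomial is unchanged. Stability is preserved since $\phi_t^{\mathfrak{t}}$-invariant proper subsheaves of $E_t$ coincide with $\phi_t$-invariant ones, and $(E_t,\phi_t)$ is stable by assumption. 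Functoriality under base change $f\colon T'\to T$ follows from the compatibility of trace, of $K_S$-torsor translations, and of pullback with base change, using that $\Phi_{f^*\tilde{\EE}}=f^*\Phi_{\tilde{\EE}}$.

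Finally I would check compatibility with the twisting equivalence defining $\curly{N}_X^P(T)$. Under $\tilde{\EE}\mapsto \tilde{\EE}\otimes p_T^*L$ for a line bundle $L$ on $T$, the Higgs field becomes $\Phi_T\otimes \id_L$, whose trace is still $\tr(\Phi_T)$ because $L$ has rank one. Hence the translation is unchanged, and $(\sigma_{\tr\Phi_T}\times\id_T)^*(\tilde{\EE}\otimes p_T^*L)\cong (\sigma_{\tr\Phi_T}\times\id_T)^*\tilde{\EE}\otimes p_T^*L$, so equivalence classes map to equivalence classes. This yields a natural transformation $\curly{N}_X^P\to\curly{N}_X^P$, which by the universal property of the coarse moduli space $\N$ induces a morphism $\sigma\colon \N\to\N$; tracing through the construction on a closed point $\mathbf{Spec}(\mathbf{C})\to\N$ corresponding to $[\E_\phi]$ shows that $\sigma([\E_\phi])=[\sigma_{\tr\phi}^*\E_\phi]$.

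The main obstacle is that $\N$ carries only a \emph{twisted} universal family, so one cannot simply define $\sigma$ by a single universal pullback on $X\times\N$. The above route bypasses this by defining the shift one $T$-family at a time, using only the globally well-defined datum $\tr(\Phi_T)$; as explained, this datum is insensitive to the twisting $p_T^*L$, which is precisely what makes the construction descend to a morphism of the coarse moduli space despite the absence of a genuine universal family.
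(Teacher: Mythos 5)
Your proposal is correct and follows essentially the same route as the paper: the paper's (much terser) proof likewise argues that the functor $\tilde{\EE}\mapsto(\sigma_{\tr\phi}\times\id)^*\tilde{\EE}$ preserves flatness and the fixed numerical invariants and therefore induces a classifying map on the coarse moduli space, then reads off the action on closed points. You merely supply the verifications the paper leaves implicit (preservation of stability via the coincidence of $\phi$- and $\phi^{\mathfrak{t}}$-invariant subsheaves, base-change functoriality, and insensitivity of $\tr(\Phi_T)$ to the twist by $p_T^*L$), which is a more careful but not genuinely different argument.
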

\begin{proof}
	We note that $\EE \mapsto (\sigma_{{\tr \phi}}\times \id)^*\EE$ preserves flatness and $\sigma_{{\tr \phi}}$ fixes the Chern classes defined in Sec. \ref{chern classes}. Thus, the functor defines a classifying map $\sigma: \N \rightarrow \N$, such that $(\id\times \sigma)^*\EE\cong (\sigma_{{\tr \phi}}\times \id)^*\EE\otimes \mathcal{L}$ for some line bundle $\mathcal{L}$ pulled back from $\N$. Choosing an open $U$ where $\mathcal{L}|_U\cong \OO_U$ holds, allows us to read off the coordinates of $\sigma$ as $[\E_\phi]\mapsto [\sigma_{\tr \phi}^*\E_\phi].$
\end{proof}
\begin{dfn}\label{redefinesigma}
	We define the lift of $\sigma$ to $X\times \N$ as the map $$\sigma:=\sigma_{-\tr\phi}\times \sigma:X\times \N \rightarrow X \times \N$$ and claim:
\end{dfn}
\begin{lem}\label{sigmaequivariance}
	There exists an open $U$ on $X\times \N$ and an equivariant structure for $\EE|_U \mapsto \sigma^*\EE|_U$, lifting $\EE$ to $\mathbf{D}^b(U)^{\langle \sigma \rangle}$.
\end{lem}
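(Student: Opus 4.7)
I will show that $\sigma^*\EE$ and $\EE$ are two twisted universal families over $X\times \N$ with the same classifying map $\id_\N$, hence they differ by the pullback of a line bundle $\mathcal{L}$ on $\N$; trivialising $\mathcal{L}$ over a $\sigma$-invariant Zariski open $U_\N\subset \N$ yields the desired isomorphism $\theta_\sigma:\EE|_{X\times U_\N} \to \sigma^*\EE|_{X\times U_\N}$ on the open $U:=X\times U_\N$ of $X\times \N$, and a final rescaling arranges the cocycle condition of \cref{equi}.

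At a closed point $[\E_\phi]\in \N$, the fibre computation
$$\sigma^*\EE|_{X\times\{[\E_\phi]\}} = \sigma_{-\tr\phi}^*\bigl(\EE|_{X\times\{\sigma([\E_\phi])\}}\bigr) = \sigma_{-\tr\phi}^*\E_{\phi^{\mathfrak{t}}} = \E_\phi$$
follows from \cref{traceshiftspectral} applied to $(E,\phi^{\mathfrak{t}})$ together with the identity $\tr(\phi^{\mathfrak{t}}) = -\tr\phi$ (valid since $\rk E=2$), which also forces $(\phi^{\mathfrak{t}})^{\mathfrak{t}}=\phi$. Hence $\sigma^*\EE$ is a flat family of stable spectral sheaves with classifying map $\id_\N$, and the coarse moduli property of $\N$ yields a line bundle $\mathcal{L}\in \mathrm{Pic}(\N)$ with $\sigma^*\EE\cong \EE\otimes p_\N^*\mathcal{L}$. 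Applying $\sigma^*$ once more and using $\sigma^2 = \id$ on $X\times \N$ (which follows from the same trace identity, cf.\ \cref{squareid}) gives the constraint $\mathcal{L}\otimes \sigma^*\mathcal{L}\cong \OO_\N$.

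Pick a Zariski open $V\subset \N$ over which $\mathcal{L}|_V$ is trivial and set $U_\N:= V\cap \sigma(V)$; then $U_\N$ is $\sigma$-invariant, and on $U:=X\times U_\N$ the chosen trivialisation converts the canonical isomorphism above into $\theta_\sigma:\EE|_U\to \sigma^*\EE|_U$. The composition $\sigma^*\theta_\sigma\circ\theta_\sigma$ is an automorphism of the stable family $\EE|_U$, hence multiplication by a unit $\lambda\in \Gamma(U_\N,\OO_{U_\N}^\times)$ since each $\mathrm{Aut}(\E_\phi)=\mathbf{C}^\times$. Applying $\sigma^*$ to this identity and using $\sigma^2=\id$ forces $\sigma^*\lambda = \lambda$; rescaling $\theta_\sigma$ by $\mu\in\OO_{U_\N}^\times$ replaces $\lambda$ by $\mu\,\sigma^*\mu\,\lambda$, so it remains to choose $\mu$ with $\mu\,\sigma^*\mu = \lambda^{-1}$, e.g.\ $\mu=\nu^{-1}$ for a $\sigma$-invariant square root $\nu^2=\lambda$.

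The main obstacle is this last step, extracting the $\sigma$-invariant square root of the $\sigma$-invariant unit $\lambda$, which may require a further Zariski shrinking of $U_\N$. After this shrinking the cocycle $\sigma^*\theta_\sigma\circ \theta_\sigma = \id$ holds and lifts $\EE|_U$ to $\mathbf{D}^b(U)^{\langle \sigma\rangle}$.
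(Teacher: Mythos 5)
Your overall route is the same as the paper's: use the universal property of $\N$ to identify $\sigma^*\EE$ with $\EE$ up to a line bundle $\mathcal{L}$ pulled back from $\N$ (the paper gets this from its preceding Claim in the form $(\id\times\sigma)^*\EE\cong(\sigma_{\tr\phi}\times\id)^*\EE\otimes\mathcal{L}$ and then applies $(\sigma_{-\tr\phi}\times\id)^*$), trivialise $\mathcal{L}$ on an open, and obtain $\Psi_U:\EE|_U\cong\sigma^*\EE|_U$. Your fibrewise computation via Lemma \ref{traceshiftspectral} and the identity $\tr(\phi^{\mathfrak{t}})=-\tr\phi$ is a correct unpacking of what the paper's Claim already provides.

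Where you diverge is the cocycle condition. The paper simply asserts ``by construction, its inverse is $\sigma^*\Psi_U$''; you instead observe that $\sigma^*\Psi_U\circ\Psi_U$ is an automorphism of the simple family, hence a unit $\lambda$ with $\sigma^*\lambda=\lambda$, and try to normalise it away by solving $\mu\,\sigma^*\mu=\lambda^{-1}$. That analysis is correct and more honest than the paper's one-liner, but your proposed resolution is where the gap sits: a unit on a quasi-projective variety need not admit a square root on \emph{any} nonempty Zariski open (e.g.\ a coordinate unit $t$ on $\mathbf{G}_m$ has no square root in the function field), so ``further Zariski shrinking'' does not in general produce the $\sigma$-invariant $\nu$ with $\nu^2=\lambda$. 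The obstruction is the class of $\lambda$ in $H^2(\mathbf{Z}/2,\Gamma(U_\N,\OO^\times))$; over $\mathbf{C}$ the constant part of $\lambda$ can always be normalised (every constant has a square root), but a nonconstant part cannot be killed Zariski-locally. To close this you either need to work in the \'etale or analytic topology (where square roots of units do exist locally, and which is consistent with the paper's conventions, cf.\ the footnote in Definition \ref{sigma}), or shrink $U_\N$ so that $\Gamma(U_\N,\OO^\times)=\mathbf{C}^\times$ (possible on a neighbourhood of a point of a normal variety only after removing the divisorial locus where nonconstant units live, which requires an argument), or argue as the paper implicitly does that $\Psi_U$ can be \emph{chosen} from the canonical isomorphism so that its inverse is literally $\sigma^*\Psi_U$. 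As written, the last step of your proof does not go through in the Zariski topology.
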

\begin{proof}
	The previous claim gave us an isomorphism
	\begin{align}\label{twistedlinear}
	(\id \times \sigma)^*\EE\cong(\sigma_{\tr\phi}\times \id)^*\EE\otimes \mathcal{L}
	\end{align}
	for a line bundle $\mathcal{L}$. Applying $(\sigma_{-\tr\phi}\times \id)^*$ on both sides yields over $U$ where $\mathcal{L}|_U\cong \OO_U$ the isomorphism $$(\sigma_{-\tr\phi}\times\sigma)^*\EE|_U\cong \EE|_U,$$
	where the LHS is $\sigma^*\EE$ restricted to $U$.
	 Thus, we can pick an isomorphism $\Psi_U:\EE|_U \cong \sigma^*\EE|_U$. By construction, its inverse is $\sigma^*\Psi_U$, so there is a commutative square
	\begin{equation} 
	\begin{tikzcd}
	\EE|_U \arrow[r,"\Psi_U"] \arrow[dr,equal]& \sigma^*\EE|_U \arrow[d,"\sigma^*\Psi_U"]\\
	&(\sigma^*)^2\EE|_U
	\end{tikzcd}
	\end{equation}
	proving the claim in the sense of Def. \ref{equi}.
\end{proof}
\begin{rmk}
	In fact, the proof constructs a twisted isomorphism $\Psi: \EE \cong \sigma^*\EE\otimes \mathcal{L}$. 
\end{rmk}
\begin{cor}\label{adjoint}
	There is a lift of $\mathbf{R}\mathcal{H}om_{p_X}(\EE,\EE)$ to $\mathbf{D}^b(\N)^{\langle \sigma \rangle}$. 
\end{cor}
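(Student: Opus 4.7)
The plan is to first lift the equivariant structure from Lemma~\ref{sigmaequivariance} to the internal derived sheaf $\RHomm(\EE,\EE)\in \mathbf{D}^b(X\times\N)$, and then descend to $\N$ via the pushforward $\mathbf{R}p_{X,*}$. The key observation is that $\mathbf{R}\Homm(-,-)$ is insensitive to simultaneous twisting of both arguments by a line bundle, so the obstruction to equivariance arising from the twisted-sheaf ambiguity of $\EE$ itself disappears at the level of its derived endomorphism sheaf.

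First I would use the globally defined twisted isomorphism $\Psi: \EE \cong \sigma^*\EE \otimes \mathcal{L}$ produced in the remark after Lemma~\ref{sigmaequivariance}. Applying $\mathbf{R}\Homm(-,-)$ and using the canonical cancellation
$$\mathbf{R}\Homm(\sigma^*\EE\otimes\mathcal{L},\,\sigma^*\EE\otimes\mathcal{L})\;\cong\;\sigma^*\mathbf{R}\Homm(\EE,\EE)$$
produces a morphism $\Theta: \RHomm(\EE,\EE) \to \sigma^*\RHomm(\EE,\EE)$ defined on all of $X\times\N$. This $\Theta$ is intrinsic: if $\Psi$ is replaced by $c\cdot\Psi$ for any scalar $c$ (or more generally composed with a twist by a line bundle pulled back from $\N$), then $\Theta=\Psi^\vee\otimes\Psi$ is unchanged. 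In particular $\Theta$ does not depend on the choice of local trivialisation of the twisted family $\EE$.

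Next I would verify the cocycle condition $\sigma^*\Theta\circ\Theta=\id$ of Definition~\ref{equi}. On the open $U$ of Lemma~\ref{sigmaequivariance} where $\mathcal{L}|_U\cong\OO_U$, the honest isomorphism $\Psi_U$ satisfies the cocycle on the nose, and applying $\mathbf{R}\Homm$ yields the cocycle for $\Theta|_U$. Covering $X\times\N$ by Zariski opens on which $\mathcal{L}$ trivialises and using the intrinsicality of $\Theta$, the locally verified cocycle glues to a global one, lifting $\RHomm(\EE,\EE)$ canonically to $\mathbf{D}^b(X\times\N)^{\langle\sigma\rangle}$.

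Finally, pushing forward along $p_X:X\times\N\to\N$, the involution $\tilde\sigma:=\sigma_{-\tr\phi}\times\sigma$ on $X\times\N$ from Definition~\ref{redefinesigma} and the involution $\sigma$ on $\N$ satisfy $p_X\circ\tilde\sigma=\sigma\circ p_X$. Since both $\tilde\sigma$ and $\sigma$ are isomorphisms, pushforward commutes with their pullbacks: $\mathbf{R}p_{X,*}\tilde\sigma^{\,*}\cong\sigma^*\mathbf{R}p_{X,*}$ in $\mathbf{D}^b(\N)$. Applying $\mathbf{R}p_{X,*}$ to $\Theta$ and using this identification produces the desired lift of $\mathbf{R}\mathcal{H}om_{p_X}(\EE,\EE)$ to $\mathbf{D}^b(\N)^{\langle\sigma\rangle}$. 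The main subtlety is the twisted/untwisted bookkeeping in the second step: one must check that the line-bundle twist really cancels at the derived level for \emph{any} $\mathcal{L}$, so that the locally constructed $\Theta$ glues globally despite the twisted universal family $\EE$ having no global untwisted equivariant structure of its own.
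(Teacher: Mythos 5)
Your proposal is correct and follows essentially the same route as the paper: both use the twisted isomorphism $\Psi:\EE\cong\sigma^*\EE\otimes\mathcal{L}$ to define the conjugation (adjoint) action on $\RHomm(\EE,\EE)$, cancel the line-bundle twist via $\mathcal{L}\otimes\mathcal{L}^{-1}\cong\OO$, and then push down along $p_X$. Your extra care about the independence of $\Theta$ from the choice of trivialisation and the resulting gluing is exactly the point the paper relegates to the remark following the corollary, so it is a welcome but not divergent elaboration.
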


\begin{proof}
	We note $\sigma^2=\id$. Then the adjoint action $g \mapsto \Psi g\Psi^{-1}$ composed with the isomorphism induced by the evaluation $\mathcal{L}\otimes\mathcal{L}^{-1}\cong \OO$ defines a linearisation $$\sigma_*: \mathbf{R}\mathcal{H}om(\EE,\EE) \xrightarrow{\cong
	} \mathbf{R}\mathcal{H}om(\sigma^*\EE\otimes \mathcal{L},\sigma^*\EE\otimes \mathcal{L})\xrightarrow{\cong} \mathbf{R}\mathcal{H}om(\sigma^*\EE,\sigma^*\EE)$$ with square equal to the identity. Then use functorialty of the push-dowm $\mathbf{R}p_{X,*}$ to $\N$. 
\end{proof}
\begin{rmk}
	Although $\EE$ may not be defined everywhere, the sheaves $\Extt_{p_X}^{i}(\EE,\EE)$ are and this construction glues to an action over all of $X\times \N$, as stated in \cite[Sec. 10.2]{HL}.
\end{rmk}

	\begin{rmk}
		The map $\sigma$ shifts $\mathrm{supp}(\E_\phi)\subset X$ according to the action $\phi \mapsto \phi^\mathfrak{t}$ on Higgs fields. In order to express the line bundle twist $E\mapsto E\otimes \det(E)^{-1}$ in terms of spectral sheaves we define:
	\end{rmk}
\begin{dfn}
	For a universal family $\EE$ of spectral sheaves, set $\LLL:=  \pi^*\det(\pi_*\EE)^{-1}$
	and define the functor $$\EE \mapsto \EE\otimes \LLL.$$ 
	Similarly to the case before, this functor preserves flatness and defines a classifying map
	\begin{align*}
	\lambda: \N &\rightarrow \N,\\
	[\E_\phi] &\mapsto [\E_\phi\otimes\pi^*\det(\pi_*\E_\phi)^{-1}].
	\end{align*}
\end{dfn}	
	\begin{rmk}
		Under the restriction $c_1(E)=0$ we immediately observe that the $c_i(\E)$ as given in \ref{chern classes} are fixed under $\lambda$, so the map is a genuine involution on $\N$, preserving Gieseker stability.   
	\end{rmk}
	As in Rmk. \ref{squareid}, we observe that $\lambda^2=\id$. Again, by base change along $p_X$ we get an induced endomorphism $(\id\times \lambda)$ on $X\times\N$ and will use the letter $\lambda$ again for both maps.

We observe that $\sigma,\lambda$ commute with each other:
\begin{lem}\label{commutesigmalambda}
	We have $\sigma\lambda=\lambda\sigma$ on $\N$ and $(\sigma \lambda)^2=\id$.
\end{lem}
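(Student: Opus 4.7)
The plan is to verify both claims on closed points of $\N$. Since $\N$ is a coarse moduli space, two maps $\N \to \N$ agree if and only if they agree on closed points, and on closed points $\sigma$ and $\lambda$ admit explicit descriptions in terms of the Higgs pair model: $\sigma[(E,\phi)] = [(E, \phi^{\mathfrak{t}})]$ (using Lem.~\ref{traceshiftspectral}, noting $\pi_*\sigma_{\tr\phi}^*\E_\phi = E$ so the underlying sheaf is unchanged) and $\lambda[(E,\phi)] = [(E\otimes \det(E)^{-1}, \phi\otimes 1)]$.

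For the commutativity $\sigma\lambda = \lambda\sigma$, I would compute both compositions on a pair $(E,\phi)$ and check they coincide. The key arithmetic inputs are: (i) tensoring a Higgs field with a line bundle preserves its trace, i.e. $\tr(\phi \otimes \id_{\det(E)^{-1}}) = \tr(\phi)$ (since tensoring with a rank-one factor is transparent to the fibrewise trace); (ii) consequently $(\phi\otimes 1)^{\mathfrak{t}} = \phi\otimes 1 - \tr(\phi)\cdot\id_{E\otimes \det(E)^{-1}} = (\phi - \tr(\phi)\cdot \id_E)\otimes 1 = \phi^{\mathfrak{t}}\otimes 1$. Then $\sigma\lambda[(E,\phi)] = \sigma[(E\otimes\det(E)^{-1}, \phi\otimes 1)] = [(E\otimes\det(E)^{-1}, \phi^{\mathfrak{t}}\otimes 1)]$. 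Conversely $\lambda\sigma[(E,\phi)] = \lambda[(E, \phi^{\mathfrak{t}})] = [(E\otimes \det(E)^{-1}, \phi^{\mathfrak{t}}\otimes 1)]$, where one uses that the underlying vector bundle of $\sigma[(E,\phi)]$ is still $E$, so the determinant twist matches. The two classes agree, which gives $\sigma\lambda = \lambda\sigma$ on closed points and hence on $\N$.

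For $(\sigma\lambda)^2 = \id$, once commutativity is established the identity is formal: $(\sigma\lambda)^2 = \sigma\lambda\sigma\lambda = \sigma^2\lambda^2 = \id$, using $\sigma^2 = \id$ (which follows from $\sigma_{\tr\phi}^2 = \sigma_{\tr\phi^{\mathfrak{t}} + \tr\phi} = \id$ since $\tr(\phi^{\mathfrak{t}}) = \tr(\phi) - 2\tr(\phi) = -\tr(\phi)$, making the cumulative shift zero) and $\lambda^2 = \id$ (already checked in the $c_1(E)=0$ setting, mirroring Rmk.~\ref{squareid}).

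The only real subtlety — and the step I would flag as the main obstacle — is justifying point (i) cleanly at the level of twisted families rather than just pointwise. In families, $\lambda$ is only characterized through the functor $\EE \mapsto \EE \otimes \LLL$ up to a line bundle pulled back from $\N$, and the trace of $\Phi$ on a twisted family is defined via the canonical trace on $\mathcal{E}nd$-sheaves, which is invariant under tensoring with any line bundle pulled back from $S \times \N$ (this uses that $\LLL$ acts by a scalar at each point). Once this invariance is in place, the computations above globalize without change, and the commutativity can alternatively be read off at the level of the defining functors: both compositions correspond to $\EE \mapsto (\sigma_{\tr\phi} \times \id)^*\EE \otimes \pi^*\det(\pi_*\EE)^{-1}$ (up to the standard twisting ambiguity), using $\pi\circ \sigma_{\tr\phi} = \pi$ to move $\sigma_{\tr\phi}^*$ past the $\pi^*$-pullback in Def.~\ref{redefinesigma}.
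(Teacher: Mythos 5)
Your proposal is correct and follows essentially the same route as the paper: a computation on closed points showing both compositions yield $[(E\otimes\det(E)^{-1},\phi^{\mathfrak{t}}\otimes 1)]$, followed by the formal deduction $(\sigma\lambda)^2=\sigma^2\lambda^2=\id$. The paper phrases the check in terms of spectral sheaves on $X$, using $\pi\circ\sigma_{\tr\phi}=\pi$ and $\pi_*\E_{\phi^{\mathfrak{t}}}=\pi_*\E_\phi$, which under the spectral correspondence are exactly your observations that the determinant twist does not change the trace and the trace shift does not change the underlying sheaf.
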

\begin{proof}
	We choose a representative of a class $[\E_\phi]\in \N$ and compute\\
	$$\sigma\lambda(\E_{\phi}) =\sigma_{{\tr \phi}}^*\E_\phi\otimes \sigma_{{\tr \phi}}^*\pi^*\det(\pi_*\E_\phi)^{-1}=\E_{\phi^\mathfrak{t}}\otimes \pi^*\det(\pi_*\E_{\phi^\mathfrak{t}})^{-1}=\lambda\sigma(\E_\phi),$$
	as $\sigma_{{\tr \phi}}^*\pi^*=(\pi \circ \sigma_{{\tr \phi}})^*=\pi^*$ and $\pi_*\E_{\phi^\mathfrak{t}}=\pi_*\E_\phi$ following from Lem. \ref{traceshiftspectral} above. \\
	This composition has square equal to the identity as $(\sigma\circ \lambda)^2=\sigma\circ \lambda\circ  \sigma\circ\lambda=\sigma^2\circ \lambda^2=\id$. 
\end{proof}
This allows us to define the generalised involution for spectral families by composing the two functors:
\begin{dfn}\label{spectralfamilies}
	For a universal family $\EE$ of spectral sheaves $\E_\phi$ on $X$ parametrised by $\N$ we define 
	\begin{align*}
		\iota: \EE \mapsto (\sigma_{{\tr \phi}}\times \id)^*\EE\otimes \LLL
	\end{align*}
\end{dfn}

	We end this section by showing that the functor $\iota$ induces a classifying map $\iota: \N \rightarrow \N$ agreeing on closed points with the involution of Def. \ref{geninvol}, 
		$$\iota: (E,\phi) \mapsto (E\otimes \det(E)^{-1},\phi^{\mathfrak{t}} \otimes 1),$$
	which generalised the original map $$(E,\phi) \mapsto (E^*,-\phi^*)$$
	to torsion free Higgs pairs:
\begin{thm} \label{spectralmoduli}
	The functor $$\iota: \EE \mapsto (\sigma_{\tr\phi}\times \id)^*\EE\otimes \LLL$$ for families $\EE$ of spectral sheaves parametrized by $\N$ defines a classifying map $\iota: \N \rightarrow \N$, such that $ [\E_\phi] \mapsto \iota [\E_\phi]$ gives under the spectral correspondence the map $$\iota: (E,\phi) \mapsto (E\otimes \det(E)^{-1},\phi^{\mathfrak{t}} \otimes 1),$$
	 extending $$(E,\phi) \mapsto (E^*,-\phi^*)$$ to torsion free Higgs pairs, as defined in Def. \ref{geninvol}. More precisely, there is a commutative square
	\begin{equation}\label{diagraminvol}
	\begin{tikzcd}
	X\times \mathcal{N} \arrow[r,"\id\times \iota"]\arrow[d,"\pi \times \id"] & X\times \mathcal{N} \arrow[d,"\pi \times \id"]\\
	S\times \mathcal{N} \arrow[,r,"\id\times\iota"] &S\times \mathcal{N}. \\
	\end{tikzcd}
	\end{equation}
	where the upper $\iota$ is the action on spectral sheaves and the lower $\iota$ the one on Higgs pairs. 
\end{thm}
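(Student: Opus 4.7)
The plan is to realise $\iota$ as the composition of the two classifying maps $\sigma$ (trace-shift) and $\lambda$ (det-twist) already constructed above, then read off its action on closed points via the spectral correspondence. The key preliminary observation is that since $\pi \circ \sigma_{\tr\phi} = \pi$, the twisting line bundle $\LLL = \pi^*\det(\pi_*\EE)^{-1}$ is unchanged after applying $(\sigma_{\tr\phi}\times \id)^*$; hence the functor $\EE \mapsto (\sigma_{\tr\phi}\times \id)^*\EE \otimes \LLL$ factors unambiguously as $\lambda \circ \sigma$. Since each of $\sigma$ and $\lambda$ has already been shown to preserve flatness and the Chern invariants fixing the component $\N$, so does their composition, and by Lem. \ref{commutesigmalambda} they commute and satisfy $(\sigma\lambda)^2=\id$. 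This yields a classifying map $\iota:\N\to\N$ with $\iota^2=\id$.

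Next, to identify the closed-point action, I apply the two steps in sequence to a representative $\E_\phi$. By Lem. \ref{traceshiftspectral}, $\sigma_{\tr\phi}^*\E_\phi = \E_{\phi^\mathfrak{t}}$; tensoring with $\pi^*\det(E)^{-1}$ yields $\E_{\phi^\mathfrak{t}}\otimes \pi^*\det(E)^{-1}$. To recover the corresponding Higgs pair I push down along $\pi$: the projection formula gives $\pi_*\bigl(\E_{\phi^\mathfrak{t}}\otimes \pi^*\det(E)^{-1}\bigr) = E\otimes \det(E)^{-1}$, and the tautological Higgs field, computed as the $\pi_*$ of $\tau\cdot\id$ acting on the twisted sheaf, becomes $\phi^\mathfrak{t}\otimes 1$. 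This is precisely the pair $(E\otimes \det(E)^{-1},\phi^{\mathfrak{t}}\otimes 1)$ appearing in Def. \ref{geninvol}, which generalises $(E,\phi)\mapsto (E^*,-\phi^*)$ to torsion-free sheaves in view of Cor. \ref{torsionfree}.

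Finally, the commutativity of \eqref{diagraminvol} is essentially this very calculation read back at the level of universal families: on $X\times\N$ the map $\id\times\iota$ pulls $\EE$ back to $\iota(\EE)=(\sigma_{\tr\phi}\times \id)^*\EE\otimes \LLL$, and pushing this forward along $\pi\times\id$ to $S\times\N$ gives $\mathsf{E}\otimes \det(\mathsf{E})^{-1}$ with Higgs field $\Phi^\mathfrak{t}\otimes 1$, which is exactly $\iota(\mathsf{E},\Phi)$ pulled back along the bottom edge. The two compositions of scheme morphisms $(\pi\times\id)\circ(\id\times\iota)$ and $(\id\times\iota)\circ(\pi\times\id)$ are equal on the nose, so the square commutes.

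The main subtlety is bookkeeping the twisted nature of $\EE$: because $\EE$ is only a twisted family, every step must be compatible with the line bundle ambiguity. This is automatic here because $\sigma^*$ preserves twists and $\otimes \LLL$ merely composes with another $\pi^*$-pulled-back twist, so the ambiguity cancels at the level of classifying maps even though the lift to $X\times\N$ is only defined up to that ambiguity, as already recorded in Lem. \ref{sigmaequivariance}.
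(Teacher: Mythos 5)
Your proposal is correct and follows essentially the same route as the paper's proof: decompose the functor into the trace-shift $\sigma$ and determinant-twist $\lambda$ (using Lem.~\ref{commutesigmalambda} and Lem.~\ref{traceshiftspectral}), read off the closed-point action via the projection formula and the tautological section, and invoke Cor.~\ref{torsionfree} for the identification with $(E^*,-\phi^*)$. Your extra remark that $\LLL$ is unchanged under $(\sigma_{\tr\phi}\times\id)^*$ because $\pi\circ\sigma_{\tr\phi}=\pi$ is a welcome clarification of why the functor factors unambiguously as a composition, but it does not change the substance of the argument.
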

	\begin{proof}
		We have already discussed the classifying maps $\sigma, \lambda: \N \rightarrow \N$ arising from the functors $\EE\mapsto (\sigma_{{\tr \phi}}\times \id)^*\EE$ and $\EE\mapsto \EE\otimes \LLL$ respectively. Thus, the functor $\iota$ admits their composition $\iota: \sigma\circ \lambda: \N \rightarrow \N$ as classifying map satisfying $\iota^2=\id$. \\
		We need to show that $\iota$ induces under the spectral correspondence $[\E_\phi]=[(E,\phi)]$ the generalised involution $\iota$ of torsion free Higgs pairs $(E,\phi)$, i.e. we need to show that $$\iota(\E_\phi)=\E_{\phi^\mathfrak{t}}\otimes \pi^*\det(\pi_*\E_\phi)^{-1}$$ corresponds to the involuted Higgs pair $$\iota(E,\phi)=(E\otimes \det(E)^{-1},\phi^\mathfrak{t}\otimes 1).$$
		We have already seen that $\pi_*\E_{\phi^\mathfrak{t}}=\pi_*\E_\phi=E$ in Lem. \ref{traceshiftspectral}.\\
		Then $\pi_*(\iota\E_\phi)=\pi_*(\E_{\phi^\mathfrak{t}} \otimes \pi^*\det(\pi_*\E_{\phi^	\mathfrak{t}})^{-1})=E\otimes \det(E)^{-1}$. 
		Now the action of the tautological endomorphism $\tau \cdot \id $ on  $\E_\phi$  induces $\tau \cdot \id \otimes 1$ acting on $\E_\phi\otimes \pi^*\det(\pi_*\E_\phi)^{-1}$,  so $\pi_*\sigma_{\tr\phi}^*(\tau\cdot \id\otimes 1)= \phi^\mathfrak{t}\otimes 1$. Thus $\E_{\phi^\mathfrak{t}} \otimes \pi^*\det(\pi_*\E_\phi)^{-1}$ on $X$ corresponds to the Higgs pair $(E\otimes\det(E)^{-1},\phi^\mathfrak{t}\otimes 1)$ on $S$, showing commutativity of Diag. \ref{diagraminvol}.\\
		Both maps have square equal to the identity and we know that $\iota$ acting on Higgs pairs generalises $(E,\phi) \mapsto (E^*,-\phi^*)$ from locally free pairs to torsion frees, as shown in Cor. \ref{torsionfree}. This finishes the theorem.
	\end{proof}


\section{Deformations and the fixed locus}\label{Artinian}
\setcounter{subsection}{-1}
\subsection{Summary}
To simplify notation, we omit the subscript $\phi$ from $\E_\phi$ whenever possible.\\ 
We will see how $\iota$ acts on first order deformations $\Ext^1(\E,\E)$ of $\E$ and will identify one of the components of the scheme-theoretic fixed locus $\N^\iota$ as $$\N^\perp=\{(E,\phi): \det(E)\cong \OO_{S}, \tr(\phi)=0\}.$$
\begin{rmk}
Although this is merely the set theoretical description of $\N^\perp$, we will see that its schematic structure is inherited by the fixed point set $\N^\iota$, being an open and closed therein. Further note that we do not make any assumptions on the  isomorphism $\det(E)\cong\OO_S$ .
\end{rmk}
\begin{rmk}
In terms of spectral sheaves, this is equivalent to define $\N^\perp$ as those $\E$ that have "centre of mass zero" on each fibre of $X\rightarrow S $ and $\det(\pi_*\E)\cong \OO_S$. By the center of mass of $\E$ we mean the sum of the points of $\mathrm{supp}(\E)$ on each fibre $X\rightarrow S$, weighted by multiplicity.
\end{rmk}
\subsection{Artinian families}
Let $A$ be an Artinian local ring and denote by $X_A:=X\times \mathbf{Spec}(A)$ the product. In Sec. \ref{flatfamilies} we have recalled the notion of Artinian families $\E_A$ of spectral sheaves $\E$ on $X$ for a local Artinian $\mathbf{C}$-algebra $A$. As the universal family $\EE$ is locally well-defined on some open containing $\mathbf{Spec}(A)$, an Artinian family $\E_A$ over $X_A$ is equivalent to a morphism $f: \mathbf{Spec}(A)\rightarrow \N$ such that $(\id\times f)^*\EE\cong \E_A$.\\
Let $X_A \xrightarrow{\pi_\A} S_A$ be the base change of $\pi \times \id_{\mathcal{N}} $ along $\id_S \times f$.\\
An Artinian family over $X_A$ is the same as a family of Higgs pairs $({E}_A,\phi_A)$ over $S_A$. $\phi_A$ defines an invertible map $\sigma_{\tr \phi_\A}: X_A \rightarrow X_A$ shifting by $-\tr(\phi_A)$ on the fibres of $\pi_A: X_A \rightarrow S_A$ and we define the line bundle $L_A:=\det(E_A)^{-1}$ on $S_A$.\\
We identify a single spectral sheaf $\E$ on $X$ with its push-forward $i_{A,*}\E$ on $X_A$ for $i_A: X= X\times 0 \hookrightarrow X_A$ the inclusion of the closed point $0$ of $\mathbf{Spec}(A)$, analogously for $\iota\E$.  Note that by base change along $i_A$, we have $\sigma_{\tr\phi_A}i_A=i_A\sigma_{\tr\phi}$ and $\pi_A i_A=i_A \pi$ as displayed in the following diagram:
\begin{center}
	\begin{tikzcd}
	X \arrow[r,"i_A",hook] \arrow[d,"\sigma_{{\tr \phi}}",] & X_A \arrow[d,"\sigma_{{\tr\phi}_A}"] & &X \arrow[r,"i_A", hook] \arrow[d,"\pi"]& X_A \arrow[d,"\pi_A"]\\
	X \arrow[r,"i_A",hook] & X_A & &S \arrow[r,"i_A",hook] &S_A
	\end{tikzcd}
\end{center}

\begin{rmk}
	We see that $\iota$ acts on Artinian families as
	$$ {\E_A} \mapsto \sigma_{{\tr\phi}_A}^*{\E_A} \otimes \pi_A^*{L_A}.$$ 
\end{rmk}
\begin{prop}
	For $A=\mathbf{C}[t]/(t^2)$ this is the differential of $\iota$, $$(d\iota)_{[\E]}: {\Ext}^1(\E,\E)\rightarrow {\Ext}^1(\iota\E,\iota\E)$$
\end{prop}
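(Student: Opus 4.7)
The plan is to unwind the standard identification $T_{[\E]}\N \cong \Ext^1(\E,\E)$ via first-order deformations, and then apply the functoriality of the (twisted) universal family together with Theorem~\ref{spectralmoduli}.

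First, recall that a tangent vector $v \in \Ext^1(\E,\E)$ at $[\E]$ is, by definition of the coarse moduli space and the deformation-theoretic description of $\Ext^1$, the same data as a morphism $f_v: \mathbf{Spec}(A) \to \N$ with $f_v(0) = [\E]$; the vector $v$ is recovered as the isomorphism class of the Artinian family $\E_A := (\id_X \times f_v)^* \EE$ on $X_A$, regarded as an extension of $\E$ by $\E$. Since $\iota: \N \to \N$ is a morphism of schemes, the differential $(d\iota)_{[\E]}$ sends $v$ to the class of the Artinian family corresponding to the composition $\iota \circ f_v$, namely
\[
(\id_X \times (\iota \circ f_v))^* \EE \;=\; (\id_X \times f_v)^* (\id_X \times \iota)^* \EE.
\]

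Second, by Theorem~\ref{spectralmoduli} and Definition~\ref{spectralfamilies}, one has an isomorphism of twisted families
\[
(\id_X \times \iota)^* \EE \;\cong\; (\sigma_{\tr\Phi} \times \id_\N)^* \EE \otimes \LLL,
\]
where $\Phi$ is the universal Higgs field and $\LLL = \pi^* \det(\pi_* \EE)^{-1}$. Pulling this back along $\id_X \times f_v$ and using the base-change compatibilities displayed in the two diagrams of this section, i.e.\ $\sigma_{\tr\phi_A} \circ i_A = i_A \circ \sigma_{\tr\phi}$ and $\pi_A \circ i_A = i_A \circ \pi$, together with $\sigma_{\tr\Phi}$ restricting to $\sigma_{\tr\phi_A}$ over $X_A$ and $\det(\pi_*\EE)^{-1}$ restricting to $L_A = \det(E_A)^{-1}$ over $S_A$, yields
\[
(\id_X \times f_v)^* (\id_X \times \iota)^* \EE \;\cong\; \sigma_{\tr\phi_A}^* \E_A \otimes \pi_A^* L_A.
\]

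Third, one has to address the fact that $\EE$ is only a twisted universal family. The twist enters as an extra line bundle $\mathcal{L}$ pulled back from $\N$ (as in Lem.~\ref{sigmaequivariance}). But $f_v^* \mathcal{L}$ is a line bundle on $\mathbf{Spec}(A)$ with $A$ local Artinian, hence is trivial, and the chosen trivialization is pulled back from the residue field; such a twist by a pulled-back trivial line bundle does not alter the $\Ext^1$-class of the Artinian family. This is precisely the equivalence relation $\tilde{\E} \sim \tilde{\E} \otimes p_T^* L$ built into the definition of $\curly{N}_X^P$, so the formula for $(d\iota)_{[\E]}$ is well-defined and agrees with $\E_A \mapsto \sigma_{\tr\phi_A}^* \E_A \otimes \pi_A^* L_A$.

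The main obstacle is the bookkeeping in the second and third steps: ensuring that the trace-shift $\sigma_{\tr\Phi}$ and the determinant line bundle $\LLL$ on the universal side genuinely restrict to the objects $\sigma_{\tr\phi_A}$ and $L_A$ over the Artinian base, and that the twisted nature of $\EE$ contributes no spurious term to the deformation class. Once these compatibilities are in place, the proposition follows immediately from the functoriality of $f \mapsto (\id_X \times f)^*\EE$.
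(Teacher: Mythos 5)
Your proposal is correct and follows essentially the same route as the paper: both identify a first-order deformation with an Artinian family $\E_A$ over $X_A$, apply the operation $\E_A \mapsto \sigma_{\tr\phi_A}^*\E_A \otimes \pi_A^*L_A$, and use the base-change identities $\sigma_{\tr\phi_A}i_A = i_A\sigma_{\tr\phi}$ together with the projection formula to see that the sub- and quotient objects become $i_{A,*}(\iota\E)$, so the result is a class in $\Ext^1(\iota\E,\iota\E)$. The only difference is presentational — the paper works directly with the short exact sequence and notes that the two functors preserve exactness, while you route the argument through the classifying map and additionally spell out why the twist of the universal family is harmless over a local Artinian base, a point the paper leaves implicit.
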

\begin{proof}
	Take an exact sequence $0\rightarrow \E \rightarrow {\E_A}\rightarrow \E \rightarrow 0$ on $X_A$ which stays exact after applying $\sigma_{{\tr\phi}_A}^*$ and  $\_\otimes \pi^*{L_A}$.  Furthermore, we compute for $\E=i_{A,*}\E$
	\begin{align*}
	&\sigma_{\tr\phi_{A}}^*i_{A,*}\E\otimes \pi_A^*L_A=i_{A,*}\sigma_{\tr \phi}^*\E\otimes \pi_A^*L_A\\
	&=i_{A,*}(\sigma_{\tr\phi}^*\E\otimes i_{A}^*\pi_A^*L_A)=i_{A,*}(\sigma_{\tr\phi}^*\E \otimes \pi^*L)=i_{A,*}(\iota\E).
	\end{align*}
	
	This gives the differential $(d\iota)_{[\E]}: \Ext^1(\E,\E)\rightarrow \Ext^1(\iota\E,\iota\E)$ acting on short exact sequences as 
	\begin{center}
		\begin{tikzcd}
		0 \rightarrow \E \rightarrow \E_A \rightarrow \E \rightarrow 0 \arrow[d,mapsto,"(d\iota)_{[\E]}"]\\
		0 \rightarrow \iota\E \rightarrow  \sigma_{{\tr \phi}_A}^*{\E_A}\otimes \pi_A^*{L_A} \rightarrow \iota \E \rightarrow 0.
		\end{tikzcd}
	\end{center}
\end{proof}

\subsection{$\mathbf{SU}(2)$-Higgs pairs}
The following part will be phrased in terms of Higgs sheaves, where we investigate the scheme-theoretic fixed locus $\N^\iota$ and its relation to $\mathbf{SU}(2)$-Higgs pairs, which are defined as $$\N^\perp:=\{(E,\phi): \det(E)\cong \OO_S \; \text{and} \; \tr(\phi)=0\} \subset \N.$$
Let $(E,\phi)$ a representative of $[(E,\phi)] \in \N^\iota$: Then $E\cong E\otimes \det(E)^{-2}$, hence taking determinants gives  $\det(E)\cong \det(E \otimes \det(E)^{-1})= \det(E)\otimes \det(E)^{-2}$, so we see that $\det(E)$ is a $2$-torsion line bundle. We also find $\phi = \phi^\mathfrak{t}$, so $\tr(\phi)=0$. If in addition $\det(E)\cong \OO_{S}$ holds, we conclude that $[(E,\phi)] \in \N^\perp$.We can state the following:
\begin{prop} \label{fixed locus}
	The $\iota$-fixed locus $\mathcal{N}^\iota$ consists of trace-free Higgs pairs $(E,\phi)$ where $\det(E)$ is $2$-torsion. Conversely, if $[(E,\phi)] \in \N^\perp$, then there exists an isomorphism $\iota (E,\phi)\cong (E,\phi)$, so $[(E,\phi)] \in \N^\iota$.
\end{prop}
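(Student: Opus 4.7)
The plan is to handle the two directions separately, both of which reduce to direct calculation once one unpacks the definition $\iota(E,\phi)=(E\otimes\det(E)^{-1},\,\phi^{\mathfrak{t}}\otimes 1)$ of Def.~\ref{geninvol} at the level of Higgs pairs (valid on all of $\N$ because $E$ is torsion free of rank $2$, by Cor.~\ref{torsionfree} and the subsequent discussion).

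First I would establish the forward inclusion $\N^\iota\subset \{(E,\phi):\det(E)\text{ is }2\text{-torsion},\;\tr(\phi)=0\}$. If $[(E,\phi)]\in\N^\iota$, then by definition there exists an isomorphism of Higgs pairs
\[
(E,\phi)\;\cong\;\iota(E,\phi)\;=\;(E\otimes\det(E)^{-1},\,\phi^{\mathfrak{t}}\otimes 1).
\]
Forgetting $\phi$, the underlying sheaves satisfy $E\cong E\otimes\det(E)^{-1}$; taking determinants and using $\rk(E)=2$ (so that $\det(E\otimes L)=\det(E)\otimes L^{\otimes 2}$ for any line bundle $L$) gives $\det(E)\cong\det(E)\otimes\det(E)^{-2}$, i.e.\ $\det(E)^{\otimes 2}\cong\OO_S$. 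For the trace condition, the trace is invariant under isomorphism of Higgs pairs, so
\[
\tr(\phi)\;=\;\tr(\phi^{\mathfrak{t}}\otimes 1)\;=\;\tr(\phi)-\rk(E)\cdot\tr(\phi)\;=\;-\tr(\phi),
\]
whence $2\tr(\phi)=0$ in $H^0(K_S)$; since we work over $\mathbf{C}$, this forces $\tr(\phi)=0$.

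Second, for the converse I would start with $[(E,\phi)]\in\N^\perp$, so $\det(E)\cong\OO_S$ and $\tr(\phi)=0$. Choose any trivialisation $t:\det(E)\xrightarrow{\sim}\OO_S$; this induces an isomorphism of sheaves $E\otimes\det(E)^{-1}\xrightarrow{\sim}E$. Under this identification $\phi^{\mathfrak{t}}\otimes 1$ becomes $\phi-\tr(\phi)\cdot\id=\phi$, so the two Higgs pairs $(E,\phi)$ and $\iota(E,\phi)$ are isomorphic, which by definition of the moduli space means $[(E,\phi)]=[\iota(E,\phi)]\in\N^\iota$.

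There is essentially no serious obstacle here: the content is just the translation of the functorially defined $\iota$ into the pointwise formula from Def.~\ref{geninvol}, combined with the elementary trace/determinant identities used above. The only mild subtlety worth flagging is that in the converse one does not need a canonical trivialisation of $\det(E)$, as the proposition only asserts the set-theoretic (resp.\ isomorphism-class) fixed-point statement; the scheme-theoretic comparison of $\N^\iota$ and $\N^\perp$ promised in the summary paragraph is deferred and would require separately matching deformation theories, which is outside the statement being proved.
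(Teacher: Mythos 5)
Your argument is correct for the statement as literally written, and the forward direction coincides with the paper's (the paper phrases it slightly more loosely as $\phi=\phi^{\mathfrak t}$ where, as you note, one really only gets conjugacy, but trace-invariance under conjugation gives $\tr(\phi)=-\tr(\phi)$ either way). For the converse, however, the paper deliberately does more than you do, and the difference matters for what comes next. The paper's proof has two steps: first, for locally free $E$ it exhibits the isomorphism $(E,\phi)\cong(E^*,-\phi^*)$ directly via a nowhere-vanishing skew form $\alpha\in\Gamma(\det(E)^{-1})\subset\Gamma(\Homm(E,E^*))$ together with Lem.~\ref{key lemma} (the identity $\alpha\phi=-\phi^*\alpha$ when $\tr\phi=0$), tying the fixed-point statement back to the classical symplectic picture; second, it runs your trivialisation argument not for a single closed point but for an arbitrary Artinian family $(E_A,\phi_A)$ over $\mathbf{Spec}(A)\to\N^\perp$. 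That second step is the real point of the paper's proof: it is what shows $\N^\perp$ is \emph{open} in $\N^\iota$, which Cor.~\ref{component} then combines with closedness (via the determinant map to $\mathbf{Pic}(S)[2]$) to identify $\N^\perp$ as a connected component of $\N^\iota$ carrying the induced scheme structure. You correctly flag that the scheme-theoretic comparison lies outside the bare statement, but be aware that your closed-point argument would leave a gap in the paper's subsequent logic; the fix is cheap, since your trivialisation $\det(E)\cong\OO_S$ and the computation $\alpha\phi_A^{\mathfrak t}\alpha^{-1}=\phi_A$ go through verbatim over $S_A$ once one knows $\det(E_A)$ is trivial for the family. Your bypassing of the locally-free/symplectic step is harmless, since Cor.~\ref{torsionfree} already identifies $\iota$ with $(E^*,-\phi^*)$ in that case.
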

\begin{proof}
	We have already described $\N^\iota$ and divide the proof showing that $ \N^\perp \subset \N^\iota$ into two steps:\\
	We start with locally free sheaves, as this case is more illusive: we show first that the class of a pair $(E,\phi)$ with $\det(E)\cong \OO_S$ and $\tr(\phi)=0$ is contained in $\N^\iota$. Recall that for a locally free $E$, $\iota$ can be written as $(E,\phi) \mapsto (E^*,-\phi^*)$.  \\
	In the second step, we will generalise to torsion frees and show that an Artinian family $(E_A,\phi_A)$ satisfying $\tr(\phi_A)=0$ and $\det(E_A)\cong \OO$ is contained in $\N^\iota$. We use Artinian families here as this implies that $\N^\perp \subset \N^\iota$ is open.\\
	Now let $[(E,\phi)] \in \N^\perp$ with $E$ locally free. Now the second summand in $\Homm(E,E^*)=\mathcal{S}ym^2(E^*) \oplus \det(E)^{-1}$ is trivial, thus it admits a nowhere vanishing section $\alpha$, that is, a skew isomorphism $\alpha: E \xrightarrow{\sim} E^*$ endowing $E$ with a symplectic structure. \\
	As $\tr(\phi)=0$, Lem. \ref{key lemma} reads as $\alpha \phi = (-\phi^*)\alpha$, so $$\alpha: (E,\phi) \xrightarrow{\sim} (E^*,-\phi^*)$$ defines an isomorphism of Higgs bundles.\footnote{We remark that over $\N^\perp$, the map $\alpha: (E,\phi) \xrightarrow{\sim} (E^*,-\phi^*) $ linearises the functor $(E,\phi) \mapsto \iota(E,\phi)$.}\\
	Now let $(E_A,\phi_A)$ be an Artinian family of torsion frees corresponding to a map $\mathbf{Spec}(A)\rightarrow \N^\perp$. We need to show this family is $\iota$-fixed:
	As $\tr(\phi_A)=0$, we have $\phi_A=\phi_A^\mathfrak{t}$. Furthermore, there exists a trivialisation $\alpha: \det(E_A)^{-1} \xrightarrow{\sim} \mathcal{O}_{S_A}$ and we claim that
	$$
	\begin{tikzcd}
	{E_A}\otimes \det(E_A)^{-1} \arrow[r,"\id \otimes \alpha"] \arrow[d,"\phi_A^\mathfrak{t}"] & {E_A}  \arrow[d,"{\phi_A}"]& \\
	{E_A}\otimes \det(E_A)^{-1} \otimes K_{S_A}\arrow[r,"\id \otimes \alpha\otimes 1"] & {E_A} \otimes K_{S_A}
	\end{tikzcd}
	$$
	commutes. As $\alpha,\alpha^{-1} \in \mathcal{O}_{S_A}$ and $\phi_A$ is $\mathcal{O}_{S_A}$-linear, we see that  $\alpha\phi_A^\mathfrak{t} \alpha^{-1} =\phi_A^\mathfrak{t} = \phi_A$, so $\id \otimes \alpha$ defines an isomorphism $$(E_A\otimes \det(E_A)^{-1},\phi_A^\mathfrak{t})\xrightarrow{\sim} (E_A,\phi_A)$$ thus $(E_A,\phi_A) \in \mathcal{N}^\iota$. 
\end{proof}
\begin{rmk}
	This shows $\N^\perp$ is open in $\N^\iota$. To identify $\N^\perp$ with a component of $\N^\iota$, we need to show it is also closed:
\end{rmk}
\begin{cor}\label{component}
$\N^\perp \subset \N^\iota$ is also closed.
\end{cor}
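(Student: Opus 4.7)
The plan is to realise $\N^\perp$ as the preimage of a single reduced closed point under a morphism from $\N^\iota$ to a finite discrete scheme. This will immediately force $\N^\perp$ to be both open and closed in $\N^\iota$, and together with the openness already established in Prop. \ref{fixed locus} it identifies $\N^\perp$ as a union of connected components.

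Concretely, I would invoke the classifying morphism $\det\pi_* \colon \N \to \mathrm{Pic}(S)$ sending $[\E_\phi] \mapsto [\det(\pi_*\E_\phi)] = [\det(E)]$, already mentioned in the introduction. Since $\mathsf{E} = \pi_*\EE$ is flat of rank $2$ over $S \times \N$, the determinant of Def. \ref{determinantbundle} applies relatively and yields an honest morphism of schemes. Because $c_1(E) = 0$ is fixed throughout, this map factors through the identity component $\mathrm{Pic}^0(S)$.

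By Prop. \ref{fixed locus}, on $\N^\iota$ the line bundle $\det(E)$ is $2$-torsion, so $(\det\pi_*)|_{\N^\iota}$ factors through the $2$-torsion subgroup scheme $\mathrm{Pic}^0(S)[2] \subset \mathrm{Pic}^0(S)$. Since $\mathrm{Pic}^0(S)$ is an abelian variety over $\mathbf{C}$, its $2$-torsion is a finite étale group scheme, i.e. a disjoint union of finitely many reduced closed points. The preimage of $[\OO_S]$ is therefore open and closed in $\N^\iota$, and this preimage coincides with $\N^\perp$ because $\tr(\phi) = 0$ is already automatic on the fixed locus.

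The only genuinely subtle step is to check that $\det\pi_*$ descends to a morphism of schemes despite $\EE$ being only a twisted universal family; once this is granted (it follows from the fact that the $\mathbf{C}^\times$-twist pulls back to a global line bundle on $\N$, which does not affect the $\mathrm{Pic}(S)$-valued invariant up to choosing a trivialisation on an open cover), the argument is purely formal and depends only on discreteness of the $2$-torsion of an abelian variety.
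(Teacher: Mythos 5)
Your proposal is correct and follows essentially the same route as the paper: restrict the determinant map $\det\pi_*:\N\to\mathbf{Pic}(S)$ to $\N^\iota$, observe by Prop. \ref{fixed locus} that its image lies in the discrete set of $2$-torsion line bundles, and conclude that $\N^\perp$ is the (open and closed) preimage of $[\OO_S]$. Your additional remarks on the twisted universal family and the finite \'etale structure of $\mathrm{Pic}^0(S)[2]$ merely flesh out details the paper leaves implicit.
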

\begin{proof}
	Indeed, restricting the map $\det: \N \rightarrow \mathbf{Pic}(S)$ which sends $(E,\phi) \mapsto \det(E)$ to $\N^\iota$ has image in the discrete set of 2-torsion line bundles $\mathbf{Pic}(S)[2]$ and thus decomposes $\N^\iota$ into different components. In particular, $\N^\perp=\det^{-1}([\OO_S])\cap \N^\iota$ is closed. Note that $\N^\iota \subset \N$ itself is closed being the fixed locus of a finite group action.  
\end{proof}
\begin{rmk}
	Equivalently, $\N^\perp \subset \N$ are those spectral sheaves $\E$ that have center of mass equal to zero and $\det(\pi_*\E)\cong \OO_{S}$.
\end{rmk}
\section{The partial Atiyah class}
\setcounter{subsection}{-1}
\subsection{Summary}
After having introduced the Atiyah class in a general setup in Sec. \ref{Atiyahclass}, this section introduces the truncated Atiyah class $\At_{\EE,\N}$ on $\N$. We first deal with the non-projectivity:
\subsection{The projective completion}
As $X$ is the total space of a line bundle, it is non-compact. We embed $j: X\subset \overline{X} :=\mathbf{P}(K_S^*\oplus \OO_S)$ and claim:
\begin{claim}\label{relativediff}
We have $\E\otimes\omega_{\overline{X}}\cong \E$ for single sheaves $\E$ on $X$ and similarly $\EE\otimes \omega_{p_{\overline{X}}} \cong \EE$ for a universal sheaf $\EE$ on $X\times \N$. 
\end{claim}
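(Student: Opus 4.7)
The plan is to reduce the claim to two standard facts: that $X=\mathrm{Tot}(K_S)$ is Calabi--Yau, so $\omega_X\cong\OO_X$, and the projection formula for the open embedding $j\colon X\hookrightarrow\overline{X}$. Once I know that $\omega_{\overline{X}}$ restricts to $\OO_X$, tensoring a sheaf supported inside $X$ by $\omega_{\overline{X}}$ must give back the same sheaf, and analogously in families.

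First I would verify $\omega_{\overline{X}}|_X\cong\OO_X$. The quickest route is adjunction for the open immersion: $\omega_{\overline{X}}|_X\cong\omega_X\cong\pi^*K_S^{-1}\otimes\pi^*K_S\cong\OO_X$. As a global cross-check one can compute $\omega_{\overline{X}}$ using the $\mathbf{P}^1$-bundle formula for $\overline{\pi}\colon\overline{X}=\mathbf{P}(K_S^*\oplus\OO_S)\to S$:
\[
\omega_{\overline{X}/S}\cong\OO_{\overline{X}}(-2)\otimes\overline{\pi}^*\det(K_S^*\oplus\OO_S)\cong\OO_{\overline{X}}(-2)\otimes\overline{\pi}^*K_S^{-1},
\]
so that $\omega_{\overline{X}}\cong\OO_{\overline{X}}(-2)$; this restricts trivially to $X$, since the divisor at infinity $D_\infty$ representing $\OO_{\overline{X}}(1)$ (up to a twist pulled back from $S$) does not meet $X$.

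I would then conclude using the projection formula. A spectral sheaf $\E$ has compact support in $X$, so extension by zero gives a coherent sheaf $j_*\E=j_!\E$ on $\overline{X}$, and
\[
j_*\E\otimes\omega_{\overline{X}}\cong j_*\bigl(\E\otimes j^*\omega_{\overline{X}}\bigr)\cong j_*(\E\otimes\OO_X)\cong j_*\E,
\]
i.e.\ $\E\otimes\omega_{\overline{X}}\cong\E$. For the universal version, $\omega_{p_{\overline{X}}}$ is the relative dualising sheaf of the projection $p_{\overline{X}}\colon\overline{X}\times\N\to\N$, which coincides with the pullback of $\omega_{\overline{X}}$ along $\overline{X}\times\N\to\overline{X}$. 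The same projection-formula argument, now applied to the open embedding $j\times\mathrm{id}\colon X\times\N\hookrightarrow\overline{X}\times\N$, gives $\EE\otimes\omega_{p_{\overline{X}}}\cong\EE$. The one delicate point is that $\EE$ is merely a \emph{twisted} universal family, so may not globalise to a sheaf on $X\times\N$; this is not a real obstruction, however, because the support of any local representative of $\EE$ lies in $X\times\N$, and the canonical trivialisation of $\omega_{p_{\overline{X}}}|_{X\times\N}$ is compatible with the twist data, so the local isomorphisms glue.
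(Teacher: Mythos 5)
Your proof is correct and follows essentially the same route as the paper: identify $\E$ with $j_*\E$, note $j^*\omega_{\overline{X}}=\omega_X\cong\OO_X$ because $X=\mathrm{Tot}(K_S)$ is Calabi--Yau, and apply the projection formula, with the family case handled identically via $\omega_{p_{\overline{X}}}=p_{\overline{X}}^*\omega_{\overline{X}}$. The $\mathbf{P}^1$-bundle cross-check and the remark about the twisted family are harmless additions not present in the paper's argument.
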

\begin{proof}
	We may identify the spectral sheaves $\E$ with their pushforward $j_*\E$. Then $\overline{\pi}: \overline{X} \rightarrow S $ is a $\mathbf{P}^1$-bundle containing $X$ as an open. Let $\OO(1)$ be a polarisation\footnote{This could be $\pi^*\OO_S(k)\otimes H^l$ for suitable $k,l>0$ and $H$ the relative hyperplane bundle for $X\xrightarrow{\pi} S$.} on $\overline{X}$ or its pull-back to $\overline{X}\times\N$.\\
	Although $\overline{X}$ is not of Calabi-Yau type, we see that $$\E\otimes \omega_{\overline{X}}\cong j_*(\E\otimes j^*\omega_{\overline{X}})=j_*(\E\otimes \omega_X)=j_*\E= \E,$$ 
	so its canonical $\omega_{\overline{X}}$ is trivial when restricted to $\mathrm{supp}(\E)$. \\
	A universal family $\EE$ on $\overline{X}\times \N$ for spectral sheaves $[\E] \in \N$ is by definition supported on $X\times \N \subset \overline{X}\times \N$. So similarly to the computation above, we observe that 
	$$\EE\otimes \omega_{p_{\overline{X}}} \cong \EE$$ for the \textit{relative} dualising sheaf $\omega_{p_{\overline{X}}}=p_{\overline{X}}^*\omega_{\overline{X}}$ on $\overline{X}\times \N$,
	again because $$(j\times \id)^*p_{\overline{X}}^*\omega_{\overline{X}}\cong {p^*_X}j^*\omega_{\overline{X}}\cong p_{X}^*\omega_X\cong \OO.$$
\end{proof}

\subsection{The partial Atiyah class}\label{partialAtdef}
We define the partial Atiyah class for $\EE$ using Grothendieck-Verdier duality along $p_X: X\times\N \rightarrow \N$. As $p_X$ is not proper, we embed $X\subset \overline{X}$ again into its projective completion and identify $\EE$ with its push-forward to $\overline{X}\times \N$. We need the following lemma:
\begin{lem}\label{GVD}
As sheaves in $\mathbf{D}^b(\N)$, we have the following identifications
\begin{align*}
    \mathbf{R}p_{\overline{X},*} (\RHomm(\EE,\EE)\otimes \omega_{p_{\overline{X}}}) \cong \mathbf{R}p_{\overline{X},*} (\RHomm(\EE,\EE)) \\\cong \mathbf{R}p_{X,*} (\RHomm(\EE,\EE))
\end{align*}
\end{lem}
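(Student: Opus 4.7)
The plan is to derive both isomorphisms directly from Claim \ref{relativediff} together with the observation that $\EE$, and hence $\RHomm(\EE,\EE)$, has support contained in $X\times\N \subset \overline{X}\times\N$.

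For the first isomorphism, I would move the line bundle inside the inner Hom: since $\omega_{p_{\overline{X}}}$ is invertible there is a canonical identification
\[
\RHomm(\EE,\EE)\otimes \omega_{p_{\overline{X}}} \;\cong\; \RHomm(\EE,\EE\otimes \omega_{p_{\overline{X}}})
\]
in $\mathbf{D}^b(\overline{X}\times\N)$ (the tensor twist is internalised on the second argument). Claim \ref{relativediff} then provides an isomorphism $\EE\otimes \omega_{p_{\overline{X}}}\cong \EE$, so the right-hand side is isomorphic to $\RHomm(\EE,\EE)$. Applying $\mathbf{R}p_{\overline{X},*}$ to both sides yields the first asserted identification. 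One small subtlety: $\EE$ may only be defined as a twisted family, but the twist cancels on $\RHomm(\EE,\EE)$, so this is a genuine object of $\mathbf{D}^b(\overline{X}\times\N)$ and the manipulations above are justified.

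For the second isomorphism, I would use that $\EE$ is set-theoretically supported on $X\times\N\subset \overline{X}\times\N$ (by definition of spectral sheaves), and therefore so is $\RHomm(\EE,\EE)$. Writing $\tilde{\jmath}:=j\times \id_\N: X\times\N \hookrightarrow \overline{X}\times\N$, the sheaf $\RHomm(\EE,\EE)$ is the push-forward $\tilde{\jmath}_*\RHomm(\EE|_{X\times\N},\EE|_{X\times\N})$. Since $p_{\overline{X}}\circ \tilde{\jmath}=p_X$ and $\tilde{\jmath}$ is an open immersion followed by a closed one along the zero-section direction (in any case $\tilde{\jmath}$ is affine on the support of $\RHomm(\EE,\EE)$), we have $\mathbf{R}p_{\overline{X},*}\circ \tilde{\jmath}_*\cong \mathbf{R}p_{X,*}$ applied to the restriction, giving the second isomorphism.

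I expect no real obstacle here, the argument is essentially bookkeeping; the only point requiring care is the interplay with the twisted universal family, but since the twist disappears on $\RHomm(\EE,\EE)$ this is not an issue. The content of the lemma is to record that, for the purposes of defining the partial Atiyah class by Grothendieck–Verdier duality along $p_{\overline{X}}$, the appearance of the relative dualising sheaf $\omega_{p_{\overline{X}}}$ is harmless and the push-forward can equivalently be computed on the non-compact $X$.
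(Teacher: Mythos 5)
Your proposal is correct and follows essentially the same route as the paper: the first isomorphism is obtained from Claim \ref{relativediff} via $\EE\otimes\omega_{p_{\overline{X}}}\cong\EE$ (you merely make explicit the internalisation of the invertible twist into the second argument of $\RHomm$), and the second from the fact that $\RHomm(\EE,\EE)$ is supported on $X\times\N\subset\overline{X}\times\N$, so that pushing forward along $p_{\overline{X}}$ agrees with pushing forward along $p_X$. The extra bookkeeping you supply (the twisted family issue and the compatibility $p_{\overline{X}}\circ(j\times\id)=p_X$) is consistent with, and slightly more detailed than, the paper's two-line argument.
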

\begin{proof}
	The first isomorphism follows directly from Claim \ref{relativediff} as $\EE \otimes \omega_{p_{\overline{X}}} \cong \EE$. For the second identification we use the fact that the derived sheaf $\RHomm(\EE,\EE)$ on $\overline{X}\times \N$ is supported on $X\times \N$. 
\end{proof}
Composing the natural  map $\mathbf{L}_{\overline{X}\times\N} \rightarrow p_{\overline{X}}^*\mathbf{L}_\N$ with $\At_{\EE}$ gives 
$$\mathbf{R}\mathcal{H}om_{p_{\overline{X}}}(\EE,\EE)\rightarrow p_{\overline{X}}^*\mathbf{L}_\N[1].$$
Then Grothendieck-Verdier duality \footnote{For a precise statement of Grothendieck Verdier duality we refer to \cite[pp. 86-90]{H}.} along $p_{\overline{X}}$ gives 
$$\mathbf{R}p_{\overline{X},*} (\RHomm(\EE,\EE)\otimes \omega_{p_{\overline{X}}}) [2] \rightarrow \mathbf{L}_\N.$$
Using Lemma \ref{GVD} above, we call the resulting morphism
$$\At_{\EE, \N}: \mathbf{R}\mathcal{H}om_{p_X}(\EE,\EE)[2] \rightarrow \mathbf{L}_\N$$
the \textit{partial} Atiyah class on $\N$. 
\begin{rmk}
	Whenever possible, we omit the subscript $\N$ from notation. 
\end{rmk}		
\begin{prop}\label{partialAtiyah}
	The obstruction theory given by the truncated partial Atiyah class $$\At_{\EE, \N}: \tau^{[-1,0]}\mathbf{R}\mathcal{H}om_{p_X}(\EE,\EE)[2]\rightarrow  \mathbf{L}_\N$$
	admits a $2$-term representation of vector bundles. 
\end{prop}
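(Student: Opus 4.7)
The plan is to prove the representability in three steps: establish that the relevant complex is perfect, verify bounded Tor-amplitude, and then apply a standard cone-construction to extract a 2-term locally free model.

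First I would check that $\mathbf{R}\mathcal{H}om_{p_X}(\EE,\EE)$ is a perfect complex on $\N$. Using Lemma~\ref{GVD} to identify this with $\mathbf{R}p_{\overline{X},*}\mathbf{R}\mathcal{H}om(\EE,\EE)$ allows me to work along the smooth projective morphism $p_{\overline{X}}:\overline{X}\times\N\to\N$. On the smooth quasi-projective $\overline{X}\times\N$ the coherent (twisted) sheaf $\EE$ admits a finite locally free resolution, so $\mathbf{R}\mathcal{H}om(\EE,\EE)$ is perfect there; pushing a perfect complex forward along a smooth projective morphism yields a perfect complex, so $\mathbf{R}\mathcal{H}om_{p_X}(\EE,\EE)$ is perfect on $\N$.

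Next I would verify that $E^\bullet:=\tau^{[-1,0]}\mathbf{R}\mathcal{H}om_{p_X}(\EE,\EE)[2]$ is itself perfect, with Tor-amplitude in $[-1,0]$. The trace and cotrace splittings recorded in the Notation section identify the $h^{-2}$ and $h^1$ contributions to $\mathbf{R}\mathcal{H}om_{p_X}(\EE,\EE)[2]$ (coming from $\Extt^0_{p_X}(\EE,\EE)\cong \OO_\N$ and, by Serre duality on the Calabi-Yau $X$, $\Extt^3_{p_X}(\EE,\EE)\cong \OO_\N$) with direct summands of the form $\mathbf{R}p_{S,*}\OO_S$ and $\mathbf{R}p_{S,*}K_S$ up to shifts; these are perfect as complexes on $\N$. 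The iterated triangles $\tau^{\leq -2}E^\bullet\to E^\bullet\to \tau^{\geq -1}E^\bullet$ and $\tau^{[-1,0]}E^\bullet\to\tau^{\geq -1}E^\bullet\to\tau^{\geq 1}E^\bullet$ then express $E^\bullet$ as an iterated cone of perfects, so it is perfect. Cohomology and base change at a closed point $[\E]\in\N$ gives $h^{-1}=\Ext^1(\E,\E)$ and $h^0=\Ext^2(\E,\E)$ as the only non-zero fibre cohomologies, which confirms Tor-amplitude in $[-1,0]$.

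Finally, I would extract the 2-term representation by a standard cone-construction using the resolution property of the quasi-projective $\N$. Pick a vector bundle $V^0$ with a surjection $V^0\twoheadrightarrow h^0(E^\bullet)=\Extt^2_{p_X}(\EE,\EE)$ and lift it to a morphism $V^0\to E^\bullet$ in $\mathbf{D}^b(\N)$. Completing to a distinguished triangle $V^{-1}\to V^0\to E^\bullet\to V^{-1}[1]$, the long exact sequence together with the surjectivity on $h^0$ shows that $V^{-1}$ is concentrated in degree $0$ as a single coherent sheaf. Tensoring the triangle with an arbitrary coherent $M$ and invoking Step~2 forces $\mathrm{Tor}_1(V^{-1},M)=0$ for all such $M$, so $V^{-1}$ is flat and hence locally free. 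The complex $[V^{-1}\to V^0]$ is the desired 2-term representation, and composing the quasi-isomorphism with $\At_{\EE,\N}$ induces the corresponding morphism $[V^{-1}\to V^0]\to \mathbf{L}_\N$.

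The main obstacle is Step~2: canonical truncation of a perfect complex is not perfect in general, so one must genuinely use the splittings of the Notation section to realise the offending degree-$(-2)$ and degree-$1$ pieces as independently perfect summands before truncating. Once this perfectness and amplitude bound are in hand, Step~3 is a routine application of the resolution property.
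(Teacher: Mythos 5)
Your overall strategy (perfection, Tor-amplitude in $[-1,0]$, then a two-term locally free model) is a legitimate alternative to the paper's proof, which instead pushes down an explicit ``sufficiently negative'' finite locally free resolution $F^\bullet$ of $\RHomm(\EE,\EE)$ on $\overline{X}\times\N$, chosen so that each $p_{\overline{X},*}F^k$ is already a vector bundle, and then truncates; the paper's route has the added benefit of producing a genuine map of complexes representing $\At_{\EE,\N}$ all the way into $\mathbf{L}_\N$, which is what the later equivariant arguments actually use. However, two of your steps have concrete gaps.

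First, $\overline{X}\times\N$ is \emph{not} smooth: the paper states explicitly that $\N$ is neither smooth nor a local complete intersection, so you cannot deduce a finite locally free resolution of $\EE$ from smoothness of the ambient space. The correct (and standard) argument is that $\EE$ is flat over $\N$ and has finite homological dimension on each fibre $\overline{X}_t$, a smooth projective threefold; truncating a locally free resolution after $\dim\overline{X}+1$ steps then has locally free kernel, so $\EE$ is perfect. Second, and more seriously, your cone construction begins by lifting a surjection $V^0\twoheadrightarrow h^0(E^\bullet)$ to a morphism $V^0\to E^\bullet$ in $\mathbf{D}^b(\N)$. Such a lift is obstructed: the cone of $E^\bullet\to\tau^{\geq 0}E^\bullet=h^0(E^\bullet)$ is $h^{-1}(E^\bullet)[2]$, so the obstruction lives in $\Ext^2_{\N}(V^0,h^{-1}(E^\bullet))$, and since $\N$ is only quasi-projective (not proper) there is no Serre vanishing available to kill this group by taking $V^0$ to be a large negative twist. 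The standard repair runs the construction in the other direction: use the resolution property of $\N$ to choose a bounded complex of vector bundles $F^\bullet$ quasi-isomorphic to $E^\bullet$, use $h^{>0}(E^\bullet)=0$ to replace the terms in positive degrees by iterated kernels (which stay locally free because the differentials there are surjective), and finally replace $F^{\leq -1}$ by $Q:=\coker(F^{-2}\to F^{-1})$; your own Tor-amplitude computation then gives $\mathrm{Tor}_1(Q,k(x))=0$ for every point $x$, so $Q$ is locally free and $[Q\to F^0]$ is the desired representative. With these two repairs your argument goes through, and your Step~2 --- identifying the offending $h^{-2}$ and $h^{1}$ with the trivial pieces coming from $\Extt^0_{p_X}(\EE,\EE)\cong\OO_{\N}$ and $\Extt^3_{p_X}(\EE,\EE)\cong\OO_{\N}$ via the trace splittings --- is precisely the point the paper leaves implicit when it truncates.
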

\begin{proof}
	We identify $\mathbf{R}\Homm(\EE,\EE)$ with its push-forward to $\overline{X}\times \N$ and choose a sufficiently negative finite resolution\footnote{Note that we can assume the finiteness as $\overline{X}\times \N$ is a quasi-projective variety. } $F^{\bullet}$ on $\overline{X}\times\N$ of locally frees representing $\mathbf{R}\mathcal{H}om(\EE,\EE)$ such that 
	\begin{itemize}
		\item the full Atiyah class $\At_{\EE}: \mathbf{R}\mathcal{H}om(\EE,\EE)\rightarrow \mathbf{L}_{\overline{X}\times \N}[1]$ is represented by complexes (see Def.\ref{represbycomplex}) and so is the natural map $p_\N^*\mathbf{L}_{\overline{X}\times \N}\rightarrow \mathbf{L}_\N$. 
		\item the push-downs $p_{\overline{X},*} F^{k}$ to $\N$ are again locally free for all $k$.
	\end{itemize}  
	The latter can be achieved e.g. by making sure $F^{\bullet,\vee}$ has no fibre-wise cohomology $H^{i}(\overline{X}_{n},F^k|_{\overline{X}_n})=0$ for $i>0$ and all $k$. This forces $p_{\overline{X},*} (F^{\bullet,\vee})$ to consist of locally frees, hence the same holds for $p_{\overline{X},*} F^{\bullet}$. Composing $\At_{\EE}$ with $p_{\overline{X}}^*\mathbf{L}_{\overline{X}\times \N}\rightarrow \mathbf{L}_\N$ gives after applying Verdier duality to $p_{\overline{X}}$ the map
	\begin{align}\label{repres}
	\mathbf{R}p_{\overline{X},*}(\mathbf{R}\mathcal{H}om(\EE,\EE)  \otimes \omega_{p_{\overline{X}}})[2] \rightarrow \mathbf{L}_\N
	\end{align}
	By Lem. \ref{GVD}, we have $\mathbf{R}p_{\overline{X},*} (\RHomm(\EE,\EE)\otimes \omega_{p_{\overline{X}}}) \cong \mathbf{R}p_{X,*} (\RHomm(\EE,\EE))$. Thus, truncation to degrees $-1,0$ gives a representation of complexes for the truncated partial Atiyah class on $\N$
	$$\At_{\EE,\N}: \tau^{[-1,0]}\mathbf{R}\mathcal{H}om_{p_X}(\EE,\EE)[2]\rightarrow  \mathbf{L}_\N,$$
	where the LHS is represented by a 2-term complex of vector bundles, as we made sure that $p_{\overline{X},*} F^\bullet[2]$ consist of locally frees. 
	
\end{proof}

\begin{rmk}
	First observe that $\At_{\EE, \N}$ is by \cite[Lem.4.2]{HT} an obstruction theory and now perfect. \\
	We denote by $V^\bullet$ the resulting $2$-term representation of $$\tau^{[-1,0]}\mathbf{R}\mathcal{H}om_{p_X}(\EE,\EE)[2]$$
	and remark that the proof gives a map $$ [V^{-1}\rightarrow V^{0}] \xrightarrow{\psi} [\mathcal{I}/\mathcal{I}^2 \rightarrow \Omega_{\mathcal{A}}|_{\N}]  \hspace{5pt} \in \hspace{5pt} \mathbf{D}^{[-1,0]}(\N).$$
	We will work in the next section with the non-truncated partial Atiyah class only and will often use the dual $$\At_{\EE,\N}: \mathbf{T}_\N \rightarrow \mathbf{R}\mathcal{H}om_{p_X}(\EE,\EE)[1],$$
	denoted by the same symbol.
\end{rmk} 
\begin{dfn} 
	As already introduced in \ref{virtualtangentbundle}, we call $V^\bullet=[V^{-1} \rightarrow V^0]$ the virtual cotangent bundle of $\N$ and its dual $V^{\bullet,\vee}=[V^0 \rightarrow V^1]$ the virtual tangent bundle. 
\end{dfn}
The $2$-term representation of  $\mathbf{R}\mathcal{H}om_{p_X}(\EE,\EE)[1]$ is going to be important for Sec. \ref{finalsec}.

\section{The trace-identity splitting}
\setcounter{subsection}{-1}
\subsection{Summary}
In order to lift $\iota$ to the virtual (co-)tangent bundle, we need to discuss the $\iota$-equivariance for $\mathbf{R}\mathcal{H}om_{p_X}(\EE,\EE)$ and the Atiyah class $\At_{\EE, \N}$.\\
We decompose $\iota=\sigma \circ \lambda$ on $\N$ into line bundle twist $\lambda$ and trace shift $\sigma$ (see Def. \ref{spectralfamilies}) and observe there is a natural lift of these maps to $\mathbf{R}\mathcal{H}om_{p_X}(\EE,\EE)$. \\
Furthermore, we will have a closer look at the trace-identity split maps 
\begin{equation}
\begin{tikzcd}
\mathbf{R}\Homm_{p_{X}}(\EE,\EE) \arrow[r] \arrow[d] & \mathbf{R}p_{S_*}K_S[-1]\\
\mathbf{R}p_{S_*}\mathcal{O}_{S}.
\end{tikzcd}
\end{equation}
Here, by a split map we mean that the depicted arrows have a right-inverse making $\mathbf{R}p_{S,*}\mathcal{O}_{S}$ and $\mathbf{R}p_{S,*}K_{S}[-1]$ into direct summands of $\mathbf{R}\mathcal{H}om_{p_X}(\EE,\EE)$. We will observe that the \textit{natural} lifts of $\sigma$, $\lambda$ to $\mathbf{R}\mathcal{H}om_{p_X}(\EE,\EE)$ act as the identity on these two terms.
\subsection{Setup}
Let $\langle \iota \rangle $ act on $\N$, where $\iota$ is seen as a $\mathbf{Z}/(2\mathbf{Z})$-action. 
\begin{dfn}\label{equivdef}
	As introduced in Def. \ref{equi}, we call $\At_{\EE}$ $\iota$-equivariant if there is a commutative diagram 
	\begin{equation} 
	\begin{tikzcd}
	[column sep=14ex]
	\mathbf{R}\Homm_{p_X}(\EE,\EE)[1] \arrow[r,"\theta_\iota"] &\mathbf{R}\Homm_{p_X}(\iota^*\EE,\iota^*\EE)[1]\\
	\mathbf{T}_{\N}\arrow[u,"\At_{\EE}"] \arrow[r,"\iota_*"] & \iota^*\mathbf{T}_{\N} \arrow[u,"\iota^*\At_{\EE}"]
	\end{tikzcd}
	\end{equation}	
	such that the two  triangles
	\begin{center}
		\begin{tikzcd}
		\mathbf{T}_\N \arrow[r,"\iota_*"] \arrow[dr,equal] & \iota^*\mathbf{T}_\N \arrow[d,"\iota^*(\iota_*)"]\\
		& (\iota^{*,2})\mathbf{T}_\N \\
		\end{tikzcd}
	\end{center}
	\begin{center}
		\begin{tikzcd}
		\mathbf{R}\mathcal{H}om_{p_X}(\EE,\EE) \arrow[r,"\theta_{\iota}"] \arrow[dr,equal]&
		\mathbf{R}\mathcal{H}om_{p_X}(\iota^*\EE,\iota^*\EE) \arrow[d,"\iota^*\theta_{\iota}"]\\
		&\mathbf{R}\mathcal{H}om_{p_X}(\iota^{2,*}\EE,\iota^{2,*}\EE) \\
		\end{tikzcd}
	\end{center}
	map to each other via $\At_{\EE}$ and $\iota^*\At_{\EE}$, lifting $\At_{\EE}$ to $\mathbf{D}^b(\N)^{\langle \iota \rangle} $
\end{dfn}
\begin{rmk}
The composition of the differential maps $\mathbf{T}_ \N \rightarrow \iota^*\mathbf{T}_\N \rightarrow \iota^{*,2} \mathbf{T}_\N$ gives naturally the identity, so we get the first triangle for free. Thus we are left to construct a map $$\theta_{\iota}: \mathbf{R}\mathcal{H}om_{p_X}(\EE,\EE)  \rightarrow \mathbf{R}\mathcal{H}om_{p_X}(\iota^*\EE,\iota^*\EE)$$ compatible with $\At_{\EE}$ in the above sense. 
\end{rmk}
\subsection{Strategy}
We decompose $\iota=\sigma \circ\lambda$ into two maps and construct $\theta_{\lambda}, \theta_{\sigma}$ separately. \\
The  determinant twist  $\lambda: \N \rightarrow \N$ was on closed points the map $$[\E_\phi ]\mapsto[ \E_\phi\otimes \pi^*\det(\pi_*\E_\phi)^{-1}].$$
The trace shift $\sigma: \N \rightarrow \N $ is $$[\E_\phi] \mapsto[ \sigma_{\tr\phi}^*\E_\phi].$$
We remark that both $\lambda$ and $\sigma$ have square equal to the identity. \\
Furthermore, we showed in Lem. \ref{commutesigmalambda} that
	$\lambda \sigma=\sigma \lambda$ holds. 

\subsection{Trace and determinant}\label{splitting}
As $X \xrightarrow{\pi} S$ is affine, the functor of Rmk. \ref{classifyingmap} $$\EE \mapsto \pi_*\EE$$ preserves flatness and defines a classifying map 
\begin{align*}
	\Pi: \N &\rightarrow \M \\
	[\E_\phi] &\mapsto [\pi_*\E_\phi]
\end{align*}
to the moduli stack $\M$ of sheaves $\pi_*\E$ on $S$. Thus, $$\EE \mapsto \det(\pi_*\EE)$$
gives the determinant map 
\begin{align*}
	\det\pi_*:\N &\rightarrow \mathbf{Pic}(S);\\
	 [\E_\phi] &\mapsto [\det(\pi_*\E_\phi)]
\end{align*}
We construct the universal section $\tr(\Phi)$ as follows: The map $\N \xrightarrow{\Pi} \M$ "forgetting the Higgs field" has over closed points $[E]\in \M$ fibres given by the vector spaces $\Hom(E,E\otimes K_S)$, parametrising Higgs fields $\phi$. In families, the sheaf $ \Homm(\mathsf{E},\mathsf{E}\otimes K_S)$ admits a universal section $\Phi$ over $\M\times S$. Composing with  $\tr: \Homm(\mathsf{E},\mathsf{E}\otimes K_S)\rightarrow K_S\otimes \OO_{\M\times S}$ and pulling back to $\N\times S$ defines a section $\tr(\Phi)$ of $K_S\otimes \OO_{\N\times S}.$ Thus,
 $$(\mathsf{E},\Phi) \mapsto \tr(\Phi) $$ defines a map
 \begin{align*}
 	\tr: \N &\rightarrow \Gamma(K_S);\\
 	[\E_\phi] &\mapsto \tr\phi.
 \end{align*}
We get global trace and determinant maps 
\begin{equation*}
\begin{tikzcd}
\N \arrow[r] \arrow{d}{\det\pi_*} \arrow{r}{\tr}& \Gamma(K_S)\\
\mathbf{Pic}(S)& 
\end{tikzcd}
\end{equation*}
At the level of tangent spaces at a fixed point $[\E_\phi]\in \N$, the differentials are
\begin{equation*}
\begin{tikzcd}
\Ext^1(\E_\phi,\E_\phi)  \arrow[d] \arrow[r] & H^0(K_S)\\
H^1(\OO_S)& 
\end{tikzcd}
\end{equation*}
Globally on $\N$, we want to identify the maps 
\begin{equation}
\begin{tikzcd}
\mathbf{R}\Homm_{p_{X}}(\EE,\EE) \arrow[r] \arrow[d] & \mathbf{R}p_{S_*}K_S[-1] \\
\mathbf{R}p_{S_*}\mathcal{O}_{S} 
\end{tikzcd}
\end{equation}
as the \textit{virtual} differential of the trace and determinant maps.\footnote{We will use for $\det$ a result of \cite{STV} and for $\tr$ one of \cite{TT}.}\\
These maps are constructed as follows:\\
The arrow
$$\mathbf{R}\mathcal{H}om_{p_X}(\EE,\EE) \xrightarrow{\tr\pi_*} \mathbf{R}p_{S,*}\mathcal{O}_{S}$$ is split with right-inverse $\frac{1}{\rk(\pi_*\E) }(\pi^*\_ \cdot \id)  =\frac{1}{2} \pi^* (\_ \cdot \id)$. \\
Replacing the arrows by their duals gives after applying Grothendieck-Verdier duality the split morphism $$ \mathbf{R}\mathcal{H}om_{p_X}(\EE,\EE) \rightarrow \mathbf{R}p_{S,*}K_{S}[-1].$$
We remark that the duality shifts the RHS by $-1$ as $p_X$ is of dimension $3$ and $p_S$ of dimension $2$. We denote the resulting arrows by $a$ and $b$ respectively, now being Serre dual to $\pi^*(\_ \cdot \id)$ and $\tr(\pi_*\_)$, i.e. $a\circ b=2\cdot \id$. 
\begin{lem}
	The composition $$\mathbf{R}p_{S,*}K_{S}[-1] \xrightarrow{b} \mathbf{R}\mathcal{H}om_{p_X}(\EE,\EE)\xrightarrow{\tr(\pi_*)} \mathbf{R}p_{S,*}\mathcal{O}_{S}$$ is zero.
\end{lem}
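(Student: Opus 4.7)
The strategy is to exploit the Tanaka--Thomas triangle
$$\mathbf{R}\mathcal{H}om_{p_X}(\EE,\EE) \xrightarrow{\pi_*} \mathbf{R}\mathcal{H}om_{p_S}(\mathsf{E},\mathsf{E}) \xrightarrow{[\_,\Phi]} \mathbf{R}\mathcal{H}om_{p_S}(\mathsf{E},\mathsf{E}\otimes K_S) \xrightarrow{\partial} \mathbf{R}\mathcal{H}om_{p_X}(\EE,\EE)[1]$$
from Section \ref{TTtriangle}, combined with relative Serre duality. Factoring $\tr\pi_* = \tr\circ\pi_*$ and dualising via Grothendieck--Verdier gives $b = (\tr\pi_*)^\vee = \pi_*^\vee\circ \tr^\vee$, reducing the claim to computing these two Serre duals.

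Using relative Grothendieck--Verdier duality along $p_S$ (of relative dimension $2$), one has identifications $(\mathbf{R}p_{S,*}\mathcal{O}_S)^\vee \cong \mathbf{R}p_{S,*}K_S[2]$ and $\mathbf{R}\mathcal{H}om_{p_S}(\mathsf{E},\mathsf{E})^\vee \cong \mathbf{R}\mathcal{H}om_{p_S}(\mathsf{E},\mathsf{E}\otimes K_S)[2]$, under which $\tr^\vee$ is identified, up to a shift by $2$, with the natural map $(\_\cdot\id_{\mathsf{E}}) : \mathbf{R}p_{S,*}K_S \to \mathbf{R}\mathcal{H}om_{p_S}(\mathsf{E},\mathsf{E}\otimes K_S)$. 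Analogously, combining Claim \ref{relativediff} (i.e.\ $\EE\otimes\omega_{p_{\overline{X}}} \cong \EE$) with relative duality along $p_{\overline{X}}$ yields $\mathbf{R}\mathcal{H}om_{p_X}(\EE,\EE)^\vee \cong \mathbf{R}\mathcal{H}om_{p_X}(\EE,\EE)[3]$, and I would identify $\pi_*^\vee$ with (a shift of) the connecting map $\partial$ of the Tanaka--Thomas triangle; this appeals to the Serre self-duality of that triangle noted at the end of Section \ref{TTtriangle}.

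Combining the two, $b$ factors as
$$\mathbf{R}p_{S,*}K_S[-1] \xrightarrow{(\_\cdot\id)[-1]} \mathbf{R}\mathcal{H}om_{p_S}(\mathsf{E},\mathsf{E}\otimes K_S)[-1] \xrightarrow{\partial[-1]} \mathbf{R}\mathcal{H}om_{p_X}(\EE,\EE).$$
Composing with $\pi_*$ then yields zero, since $\pi_*\circ\partial[-1] = 0$ is the vanishing of the composition of two consecutive maps in the distinguished Tanaka--Thomas triangle; applying $\tr$ afterwards gives $\tr\pi_*\circ b = 0$, as required. The main technical obstacle is the identification of $\pi_*^\vee$ with $\partial$: while functoriality of Grothendieck--Verdier duality predicts such a statement, verifying compatibility of the various duality isomorphisms with the naturally defined connecting morphism (up to sign or non-zero scalar) requires careful bookkeeping of shifts and signs.
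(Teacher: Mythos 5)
Your proposal is correct and follows essentially the same route as the paper: both arguments factor $b$, via the Serre self-duality of the Tanaka--Thomas triangle, through the connecting map $\partial$ precomposed with $(\_\cdot\id):\mathbf{R}p_{S,*}K_S[-1]\rightarrow \mathbf{R}\mathcal{H}om_{p_S}(\mathsf{E},\mathsf{E}\otimes K_S)[-1]$, and then conclude from the vanishing $\pi_*\circ\partial=0$ of consecutive maps in a distinguished triangle. The ``technical obstacle'' you flag (identifying $\pi_*^\vee$ with $\partial$ compatibly with the duality isomorphisms) is exactly the point the paper outsources to \cite[Prop.2.21]{TT} rather than reproving.
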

\begin{proof}
	We observe that this map factors over $$\mathbf{R}\mathcal{H}om_{p_S}(\mathsf{E},\mathsf{E}\otimes K_S)[-1] \xrightarrow{\pi_*\circ \partial} \mathbf{R}\mathcal{H}om_{p_S}(\mathsf{E},\mathsf{E})\xrightarrow{\tr} \mathbf{R}p_{S,*}\mathcal{O}_{S},$$ which is zero, following from \cite[Prop.2.21]{TT} using the triangle stated in Sec. \ref{TTtriangle}.
\end{proof}
\begin{dfn}\label{globalsplitting}
	This results in a global splitting that we denote as 
	$$ \mathbf{R}\mathcal{H}om_{p_X}(\EE,\EE)=\mathbf{R}\mathcal{H}om_{p_X}(\EE,\EE)^\perp \oplus \mathbf{R}p_{S,*}K_{S}[-1]\oplus \mathbf{R}p_{S,*}\mathcal{O}_{S}.$$ 
\end{dfn}

The following paragraph relates this splitting to $\sigma$ and $\lambda$. 
\subsection{ The line bundle twist}\label{naivelinebundle}
Choosing a twisted family $\EE$ over $X\times \N$, we defined in Def. \ref{spectralfamilies} the line bundle $\LLL:=\pi^*\det(\pi_*\EE)^{-1}$
and observed that $\lambda^*\EE \cong \EE \otimes \LLL$, where $\lambda: \N \rightarrow \N$ is the classifying map for $\EE \mapsto  \EE \otimes \LLL $, on closed points acting as $[\E_\phi] \mapsto [\E_\phi \otimes \pi^*\det(\pi_*\E_\phi)]$. Therefore, there are canonical maps $$\mathbf{R}\mathcal{H}om_{p_X}(\EE,\EE)\cong \mathbf{R}\mathcal{H}om_{p_X}(\EE\otimes \LLL,\EE\otimes \LLL),$$
where the  arrow from left to right is $\lambda_*: g \mapsto g\otimes 1 $ with inverse $\lambda^*$ given by  cancelling $\LLL^\vee\otimes \LLL\cong \OO$.
\begin{claim}
	In view of above splitting \ref{globalsplitting}, $\lambda_*$ is diagonal and acts as $+1$ on the second two factors. 
\end{claim}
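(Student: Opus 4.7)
The plan is to verify the claim factor-by-factor, using only the fact that the trace and the identity endomorphism are compatible with line-bundle twists. Recall from Def.~\ref{globalsplitting} that the two ``trace'' summands are singled out by the pair of split maps
\[
a=\tr\pi_{*}\colon \mathbf{R}\mathcal{H}om_{p_X}(\EE,\EE)\longrightarrow \mathbf{R}p_{S,*}\OO_{S},
\qquad
a^{*}=\tfrac{1}{2}\,\pi^{*}(\_\cdot\id_{\EE}),
\]
together with the Grothendieck--Verdier dual pair
\[
b\colon \mathbf{R}p_{S,*}K_{S}[-1]\longrightarrow \mathbf{R}\mathcal{H}om_{p_X}(\EE,\EE),
\qquad
b^{*}\colon \mathbf{R}\mathcal{H}om_{p_X}(\EE,\EE)\longrightarrow \mathbf{R}p_{S,*}K_{S}[-1],
\]
which are Serre dual to $a^{*}$ and $a$ respectively, and which satisfy $a\circ b=0$, $b^{*}\circ a^{*}=0$ after the lemma in Sec.~\ref{splitting}. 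The claim therefore reduces to showing

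\textbf{(i)} $a\circ\lambda_{*}=a$ and $\lambda_{*}\circ a^{*}=a^{*}$, and dually

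\textbf{(ii)} $b^{*}\circ\lambda_{*}=b^{*}$ and $\lambda_{*}\circ b=b$.

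For (i), the map $\lambda_{*}\colon g\mapsto g\otimes\id_{\LLL}$ becomes, after applying $\mathbf{R}\pi_{*}$ and the projection formula, the map $\pi_{*}g\mapsto \pi_{*}g\otimes\id_{\det(\pi_{*}\EE)^{-1}}$ on $\mathcal{H}om(\mathsf{E}\otimes L,\mathsf{E}\otimes L)$ with $L=\det(\mathsf{E})^{-1}$. Since tensoring an endomorphism by the identity of a line bundle preserves the trace, we get $\tr(\pi_{*}g\otimes\id)=\tr(\pi_{*}g)$, which is the first half of (i). For the second half, $\lambda_{*}(\pi^{*}f\cdot\id_{\EE})=\pi^{*}f\cdot\id_{\EE}\otimes\id_{\LLL}$, and this corresponds under the canonical isomorphism $\mathcal{H}om(\EE\otimes\LLL,\EE\otimes\LLL)\cong \mathcal{H}om(\EE,\EE)$ to $\pi^{*}f\cdot\id_{\EE}$.

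Statement (ii) now follows formally by Grothendieck--Verdier duality along $p_{X}$: the pair $(b,b^{*})$ is Serre dual to $(a^{*},a)$, and $\lambda_{*}$ commutes with Serre duality since $\lambda$ is an isomorphism of $\N$ fixing the relative dualising complex (and $\LLL$ is supported along the fibres in a way compatible with the duality functor, as in Claim~\ref{relativediff}). Combining (i) and (ii) with the identities $a\circ a^{*}=\id$, $b^{*}\circ b=\id$, one checks that for any section $x\in\mathbf{R}p_{S,*}\OO_{S}$ we have $\lambda_{*}(a^{*}x)=a^{*}x$ and similarly for $b$, so the two rank-one summands are stabilised with trivial action. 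The orthogonality relations $a\circ b=0$ and $b^{*}\circ a^{*}=0$ then force $\lambda_{*}$ to preserve the complementary summand $\mathbf{R}\mathcal{H}om_{p_X}(\EE,\EE)^{\perp}$, proving diagonality.

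The only point requiring care is the compatibility of $\lambda_{*}$ with Grothendieck--Verdier duality in step (ii): one must check that the adjoint action by $\lambda_{*}$ on $\mathbf{R}\mathcal{H}om_{p_X}(\EE,\EE)$ intertwines the Serre-duality isomorphism identifying $a$ with $b^{*}$. This reduces to the observation that $\omega_{p_{\overline X}}$ is trivial on the support of $\EE$ (Claim~\ref{relativediff}), so tensoring by $\LLL$ commutes with the duality pairing up to the canonical isomorphism $\LLL^{\vee}\otimes\LLL\cong\OO$ used to define $\lambda^{*}$; I expect this to be the only part where one has to be attentive, everything else being a direct computation.
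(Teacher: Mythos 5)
Your argument is correct and follows essentially the same route as the paper: the $\mathbf{R}p_{S,*}\OO_S$ summand is handled directly via the invariance of the trace under tensoring an endomorphism by the identity of a line bundle (i.e.\ the factorisation through $\LLL^\vee\otimes\LLL\cong\OO$), and the $\mathbf{R}p_{S,*}K_S[-1]$ summand is handled by transporting this via Grothendieck--Verdier duality. Your version merely makes explicit the bookkeeping (the four identities, the orthogonality relations, and the preservation of the complementary summand) that the paper leaves implicit, and correctly flags the compatibility of $\lambda_*$ with the duality as the one point needing care.
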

\begin{proof}
	This is clear for the $\mathbf{R}p_{S,*}\mathcal{O}_{S}$-term. Now we claim that the maps $\lambda_*,\lambda^*$ fix $\mathbf{R}p_{S,*}K_{S}$ because the arrows 
	\begin{equation*}
	\begin{tikzcd}
	\mathbf{R}\Homm_{p_{X}}(\EE,\EE)  & & \mathbf{R}\mathcal{H}om_{p_X}(\EE\otimes \LLL ,\EE\otimes \LLL)\arrow{ll}[swap]{\lambda^*} \\
	&\mathbf{R}p_{S_*}K_S[-1] \arrow{ul}{b} \arrow{ur}[swap]{b}&
	\end{tikzcd}
	\end{equation*}
	are dual to the trace maps 
	\begin{equation*}
	\begin{tikzcd}
	\mathbf{R}\Homm_{p_{X}}(\EE,\EE)  \arrow[dr,"\tr \pi_*"] \arrow[rr,"\lambda_*"] & & \mathbf{R}\mathcal{H}om_{p_X}(\EE\otimes \LLL ,\EE\otimes \LLL) \arrow{dl}[swap]{\tr\pi_*}\\
	&\mathbf{R}p_{S_*}\OO_S&
	\end{tikzcd}
	\end{equation*}
	and the latter commutes because taking traces factors over the natural evaluation $\LLL^\vee\otimes \LLL\cong \OO$. \\
\end{proof}
\subsection{The trace shift}\label{naivesigma}
In Lem. \ref{sigmaequivariance}, we made the classifying map $\sigma: \N \rightarrow \N$, $[\E_\phi] \mapsto [\sigma_{{\tr \phi}}^*\E_\phi]$ into a locally equivariant action on a universal sheaf $\EE$, lifting $\mathbf{R}\mathcal{H}om_{p_X}(\EE,\EE)$ to $ \mathbf{D}^b(\N)^{\langle \sigma \rangle}$ in Cor. \ref{adjoint}.\\
We need the following lemma:
\begin{lem}
	There exists a smooth ambient space for $X\times \N \subset \mathcal{A}$ extending $\sigma$.
\end{lem}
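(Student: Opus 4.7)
The plan is to reduce this directly to Lemma \ref{equivemb}, which already provides $\iota$-equivariant embeddings of quasi-projective varieties equipped with any involution into a smooth ambient space. So the only work is to verify that its hypotheses are satisfied for $Y = X\times \N$ with the involution $\sigma = \sigma_{-\tr\phi}\times \sigma$ defined in Def.\ \ref{redefinesigma}.

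First I would check quasi-projectivity. The surface $S$ is projective, and $X = \mathrm{Tot}(K_S)$ embeds as an open subscheme of the projective completion $\overline{X} = \mathbf{P}(K_S^*\oplus \OO_S)$ introduced earlier, so $X$ is quasi-projective. The moduli space $\N$ is quasi-projective by the discussion in Sec.\ \ref{modulispaces} (being a component of a GIT quotient of a Quot scheme). The product of two quasi-projective varieties is quasi-projective, so $X \times \N$ satisfies the hypothesis of Lemma \ref{equivemb}.

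Next, $\sigma$ is a genuine involution: $\sigma^2 = \id$ on $\N$ follows from $\sigma_{\tr\phi}^2 = \id$ (Lem.\ \ref{traceshiftspectral} together with $\phi^{\mathfrak{t},\mathfrak{t}}=\phi$), and the lift to $X\times \N$ squares to the identity since both factors do. Applying Lemma \ref{equivemb} now yields a $\sigma$-equivariant embedding $X\times \N \hookrightarrow \mathcal{A}$ into a smooth projective ambient space. Concretely, if $\mathscr{L}$ is any very ample line bundle on $X\times \N$, then $\mathscr{L}\otimes \sigma^*\mathscr{L}$ is very ample and carries a canonical $\sigma$-linearisation given by swapping the tensor factors, and the induced map to $\mathbf{P}(H^0(\mathscr{L}\otimes \sigma^*\mathscr{L})^\vee)$ extends $\sigma$ to a linear action on the projective ambient space.

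There is no substantive obstacle: the content is really the verification of quasi-projectivity of $X \times \N$, which is immediate, and the statement then reduces to the already-proven equivariant embedding lemma.
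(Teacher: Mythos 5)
Your proof is correct and follows essentially the same route as the paper: both reduce to Lemma \ref{equivemb} via the swap-linearisation of $\mathscr{L}\otimes\sigma^*\mathscr{L}$. The only (minor) difference is that the paper first reduces to finding a $\sigma$-linearised very ample line bundle on $\N$ alone, invoking smoothness of $X$, whereas you apply the lemma directly to the quasi-projective product $X\times\N$ with the involution $\sigma_{-\tr\phi}\times\sigma$ --- which, if anything, sidesteps the question of how the fibrewise shift extends beyond $\N$.
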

\begin{proof}
	As $X$ is smooth, it is enough by Lem. \ref{equivemb} to find a $\sigma$-linearised very ample line bundle on $\N$. Again, we may consider $\mathscr{L}\otimes \sigma^*\mathscr{L}$, $\sigma$-linearised by swapping the factors.  
\end{proof}
By Rmk. \ref{equivillusie}, this makes $\mathbf{L}_\N$ $\sigma$-linearised. We can show:
\begin{cor}
	The partial Atiyah class $\At_{\EE}$ on $\N$ admits a $\sigma$-linearisation:
\end{cor}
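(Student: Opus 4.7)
The plan is to combine naturality and functoriality of the full Atiyah class on $\overline{X}\times \N$ with Grothendieck--Verdier duality along $p_{\overline{X}}$, in the same spirit as the construction of $\At_{\EE,\N}$ in Prop.~\ref{partialAtiyah}. The key input is the twisted equivariance $\Psi:\EE\to\sigma^*\EE\otimes\mathcal{L}$ of Lem.~\ref{sigmaequivariance}, together with the adjoint-action lift $\theta_\sigma$ of $\mathbf{R}\mathcal{H}om_{p_X}(\EE,\EE)$ from Cor.~\ref{adjoint}. The $\sigma$-linearisation of $\mathbf{L}_\N$ needed on the target side follows from the $\sigma$-equivariant smooth embedding just established, together with Rmk.~\ref{equivillusie}. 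What remains is to verify the compatibility square of Def.~\ref{equivdef} and the two cocycle triangles.

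First I would apply the naturality diagram \eqref{T Atiyah} of the Atiyah class to $\Psi$ on $\overline{X}\times \N$, obtaining a commutative square whose right-hand edge is $\At_{\sigma^*\EE\otimes\mathcal{L}}$. The Leibniz rule decomposes this into $\At_{\sigma^*\EE}\otimes 1 + 1\otimes \At_{\mathcal{L}}$, and functoriality (Sec.~\ref{fullAtiyah}) identifies the first summand with $\sigma_*\circ\sigma^*\At_{\EE}$. Next I would pass to $\mathbf{R}\mathcal{H}om$, where the line-bundle twist cancels via $\mathcal{L}\otimes\mathcal{L}^{-1}\cong\OO$ and the conjugation by $\Psi$ becomes precisely the linearisation $\theta_\sigma$ of Cor.~\ref{adjoint}. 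Composing with the canonical map $\mathbf{L}_{\overline{X}\times\N}\to p_{\overline{X}}^*\mathbf{L}_\N$, pushing down along $p_{\overline{X}}$, and invoking Grothendieck--Verdier duality (as in Lem.~\ref{GVD} and Prop.~\ref{partialAtiyah}) then yields the desired square $\sigma_*\circ \At_{\EE,\N} = \sigma^*\At_{\EE,\N}\circ \theta_\sigma$ in $\mathbf{D}^b(\N)$.

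To complete the $\sigma$-equivariant structure in the sense of Def.~\ref{equivdef}, I would check the two cocycle triangles. The triangle for $\mathbf{T}_\N$ is automatic from the chain rule for differentials, as observed right after Def.~\ref{equivdef}. The triangle for $\mathbf{R}\mathcal{H}om_{p_X}(\EE,\EE)$ reduces, after cancellation of $\mathcal{L}$, to the identity $\sigma^*\Psi\circ \Psi=\id$ built into the local construction of $\Psi_U$ in Lem.~\ref{sigmaequivariance}, giving $\theta_\sigma^2=\id$. The globalisation of these local statements on $X\times \N$ to all of $\N$ is unobstructed because the cohomology sheaves $\Extt^i_{p_X}(\EE,\EE)$ exist globally, independently of the twisted choice of $\EE$.

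The main obstacle will be the careful bookkeeping of the twist $\mathcal{L}$: one must show that the contribution of $\At_{\mathcal{L}}$ (a priori non-trivial) vanishes after passing to $\mathbf{R}\mathcal{H}om$ and composing with $\mathbf{L}_{\overline{X}\times\N}\to p_{\overline{X}}^*\mathbf{L}_\N$. Morally this is because $\mathcal{L}$ is pulled back from $\N$ by the universal property defining $\sigma$, so that $\At_{\mathcal{L}}$ factors through $p_{\overline{X}}^*\mathbf{L}_\N$ and is absorbed by the trace on $\mathcal{L}^\vee\otimes\mathcal{L}\cong \OO$ into the $\sigma_*$ already present on $\mathbf{L}_\N$. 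Making this precise is the non-formal step of the argument, and it is the one place where the non-fineness of $\N$ genuinely intervenes; it is also the point at which the naive splitting of Sec.~\ref{naivelinebundle} will need a small adjustment before proceeding to the combined involution $\iota=\sigma\circ\lambda$.
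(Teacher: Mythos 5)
Your overall architecture coincides with the paper's: make the full Atiyah class on the product $\sigma$-equivariant, compose with the (naturally equivariant) projection to the pullback of $\mathbf{L}_\N$, and apply equivariant Grothendieck--Verdier duality along the projection, using triviality of the relative canonical on the support. The difference is in how the equivariance of the full Atiyah class is obtained: the paper feeds the equivariant structure of Cor.~\ref{adjoint} and the $\sigma$-equivariant smooth embedding directly into \cite[Cor.4.4]{R}, which already asserts that the Atiyah class of an equivariant object lies in $\Ext^1(\EE,\EE\otimes\mathbf{L}_{X\times\N})^{\langle\sigma\rangle}$, whereas you re-derive this by hand from naturality, the Leibniz rule and functoriality applied to $\Psi:\EE\to\sigma^*\EE\otimes\mathcal{L}$. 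That is a legitimate alternative and has the virtue of exposing exactly where the twist enters.

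However, the one step you flag as non-formal --- the disappearance of $\At_{\mathcal{L}}$ --- is resolved incorrectly as stated. Precisely because $\mathcal{L}$ is pulled back from $\N$, the class $\At_{\mathcal{L}}$ \emph{survives} the projection $\mathbf{L}_{\overline{X}\times\N}\to p_{\overline{X}}^{*}\mathbf{L}_\N$; it is not absorbed into the $\sigma_*$ on $\mathbf{L}_\N$, and it leaves a genuine discrepancy term involving $\At_{\mathcal{L}}$ and the trace. The mechanism that kills it is the one the paper uses (and which you mention, but only in connection with the cocycle triangles): work on opens $U$ where $\mathcal{L}|_U\cong\OO_U$, so that $\At_{\mathcal{L}|_U}=0$ and $\Psi_U$ is a genuine equivariant structure as in Lem.~\ref{sigmaequivariance}, then glue using the fact that $\Extt^i_{p_X}(\EE,\EE)$ and the adjoint action are independent of the twist. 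Finally, your closing sentence conflates this gerbe twist $\mathcal{L}$ with the later correction $\theta_\sigma=\sigma_*-ba$ of Sec.~\ref{trace}: that correction has nothing to do with non-fineness, but with the fact that the naive pullback differential $\sigma_*$ on $\mathbf{T}_\N$ --- the only thing this corollary claims compatibility with, cf.\ Cor.~\ref{sigmalinear} --- is not the honest differential $\tilde{\sigma}_*$ of $\sigma:\N\to\N$, which must also differentiate $\tr\phi$.
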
	
\begin{proof}
	Together with a $\sigma$-equivariant smooth embedding $X\times \N \subset \mathcal{A}$, Cor. \ref{adjoint} makes the full Atiyah class $\At_{\EE}$ on $X\times \N$ compatible with $\sigma$ (\cite[Cor.4.4]{R}), making $\At_{\EE}$ into an element of $\Ext^1(\EE,\EE\otimes \mathbf{L}_{X\times \N})^{\langle \sigma \rangle}$. The rest is similar to the construction of the partial Atiyah class (Def. \ref{partialAtdef}), but now equivariant: Composing with the (naturally $\sigma$-equivariant) projection $\mathbf{L}_{X\times \N}\rightarrow {p_\N}^*\mathbf{L}_\N$ 
	gives $$\mathbf{R}\Homm(\EE,\EE) \rightarrow p_\N^*\mathbf{L}_{\N}[1]. $$
	Applying equivariant Grothendieck duality to $p_X$ \footnote{For a statement of this duality, see \cite[Thm.2.27]{R}.} , which is a map of dimension $3$ and noting again that the relative canonical $\omega_{p_X}$ is trivial gives a $\sigma$-equivariant partial Atiyah class $$ \At_{\EE, \N}: \mathbf{R}\mathcal{H}om_{p_X}(\EE,\EE)[2] \rightarrow \mathbf{L}_\N.$$
\end{proof}

\begin{cor}\label{sigmalinear}
	After taking duals, the equivariance can be written as 
	\begin{equation*}\label{firststep}
	\begin{tikzcd}
	\mathbf{R}\Homm_{p_X}(\EE,\EE)[1] \arrow[r,"\sigma_*"] & \mathbf{R}\Homm_{p_X}(\sigma^*\EE,\sigma^*\EE)[1]\\
	\mathbf{T}_\N \arrow[r,"\sigma_*"] \arrow[u,"\At_{\EE,\N}"] &	\sigma^*\mathbf{T}_\N \arrow[u,"\sigma^*\At_{\EE,\N}"] \\
	\end{tikzcd}
	\end{equation*} 
	Here, we remark that the upper horizontal arrow is the push-down via $p_X$ of the adjoint action of $\Psi$ from Cor. \ref{adjoint}.
\end{cor}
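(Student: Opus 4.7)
The plan is to obtain this corollary by dualising the $\sigma$-equivariance diagram for the partial Atiyah class established in the previous corollary. Starting from
\[
\At_{\EE,\N}\colon \mathbf{R}\Homm_{p_X}(\EE,\EE)[2]\to \mathbf{L}_\N,
\]
lifted to $\mathbf{D}^b(\N)^{\langle \sigma\rangle}$, I would apply the contravariant functor $(-)^\vee$ to the entire equivariance square. This yields a morphism $\mathbf{T}_\N \to (\mathbf{R}\Homm_{p_X}(\EE,\EE))^\vee[-2]$ together with a commutative square that is automatically $\sigma$-equivariant, since duality in $\mathbf{D}^b(\N)$ is functorial and preserves the equivariant lift.

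Next, I would identify the target using Grothendieck-Verdier duality along $p_X$, which has relative dimension $3$. Combined with Claim \ref{relativediff}, which asserts $\EE\otimes\omega_{p_{\overline{X}}}\cong\EE$ on the support of $\EE$, this gives a canonical isomorphism
\[
(\mathbf{R}\Homm_{p_X}(\EE,\EE))^\vee \cong \mathbf{R}\Homm_{p_X}(\EE,\EE)[3],
\]
so that $(\mathbf{R}\Homm_{p_X}(\EE,\EE))^\vee[-2]\cong \mathbf{R}\Homm_{p_X}(\EE,\EE)[1]$, matching the target in the stated diagram. The subtlety here — and the main point to check carefully — is that this Serre-duality identification is itself $\sigma$-equivariant, i.e.\ that the trivialisation $\EE\otimes\omega_{p_{\overline{X}}}\cong\EE$ can be chosen compatibly with the twisted equivariant structure $\Psi$ of Cor. \ref{adjoint}. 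Since $\omega_{p_{\overline{X}}}|_{\mathrm{supp}(\EE)}$ is canonically trivial and $\sigma_{\tr\phi}$ preserves fibres of $\pi$ (hence preserves $\mathrm{supp}(\EE)$ up to the $\sigma$-action), this compatibility follows from the naturality of Verdier duality applied to $\sigma$-equivariant sheaves.

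Finally, for the identification of the upper horizontal arrow with the push-down of the adjoint action of $\Psi$: this is immediate by tracing through the construction of Cor. \ref{adjoint}, where the linearisation $\sigma_*$ on $\mathbf{R}\Homm(\EE,\EE)$ over $X\times \N$ was defined as $g\mapsto \Psi g\Psi^{-1}$, normalised by the evaluation $\mathcal{L}\otimes\mathcal{L}^{-1}\cong \OO$. Applying $\mathbf{R}p_{X,*}$ and using its functoriality transports this description directly to $\mathbf{R}\Homm_{p_X}(\EE,\EE)$.

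Beyond the equivariance of Verdier duality above, the argument is essentially bookkeeping; no genuinely new input is required, since the heavy lifting was done in constructing a $\sigma$-equivariant smooth ambient embedding, the linearisation of Cor. \ref{adjoint}, and the equivariant lift of the partial Atiyah class.
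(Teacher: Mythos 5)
Your proposal matches the paper's (essentially unwritten) argument: the corollary is obtained by dualising the $\sigma$-equivariant partial Atiyah class of the preceding corollary, using relative Serre duality along the $3$-dimensional $p_{\overline{X}}$ with $\omega_{p_{\overline{X}}}$ trivial on $\mathrm{supp}(\EE)$ to identify $(\mathbf{R}\Homm_{p_X}(\EE,\EE)[2])^{\vee}\cong \mathbf{R}\Homm_{p_X}(\EE,\EE)[1]$, and reading off the upper arrow from the construction of Cor.~\ref{adjoint}. Your extra care about the $\sigma$-equivariance of the Serre-duality identification is the one genuinely nontrivial point, and your justification of it is sound.
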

We continue with the following claim:
\begin{claim}\label{trivialaction}
	$\sigma_*: \mathbf{R}\mathcal{H}om_{p_X}(\EE,\EE) \rightarrow \mathbf{R}\mathcal{H}om_{p_X}(\sigma^*\EE,\sigma^*\EE)$ is diagonal with respect to the splitting of Def. \ref{globalsplitting} and acts again trivially on the second two sumands.
\end{claim}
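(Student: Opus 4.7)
The plan is to mirror the argument for $\lambda_*$ in the preceding claim. The splitting of Def.~\ref{globalsplitting} is cut out by the projection $\tr\pi_*: \mathbf{R}\mathcal{H}om_{p_X}(\EE,\EE) \to \mathbf{R}p_{S,*}\OO_S$ and its Grothendieck--Verdier dual $b: \mathbf{R}p_{S,*}K_S[-1] \to \mathbf{R}\mathcal{H}om_{p_X}(\EE,\EE)$. Accordingly, it suffices to show that $\sigma_*$ commutes with both of these maps after the natural identification $\pi_*\sigma^*\EE \cong \pi_*\EE$ coming from $\pi \circ \sigma_{\tr\phi} = \pi$ (Lem.~\ref{traceshiftspectral}); diagonality and the triviality of $\sigma_*$ on the two line-bundle summands then follow formally.

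For the first commutation, recall from Cor.~\ref{adjoint} that $\sigma_*$ is the adjoint action $g \mapsto \Psi g \Psi^{-1}$ followed by the canonical evaluation of the twist $\mathcal{L} \otimes \mathcal{L}^{-1} \cong \OO$. Both operations preserve traces: conjugation because $\tr$ is conjugation-invariant, and the $\mathcal{L}$-evaluation by exactly the reasoning used in the $\lambda_*$ case (the trace factors through the natural pairing). This produces the commutative triangle
\begin{center}
\begin{tikzcd}
\mathbf{R}\Homm_{p_X}(\EE,\EE) \arrow[dr,"\tr\pi_*"'] \arrow[rr,"\sigma_*"] & & \mathbf{R}\Homm_{p_X}(\sigma^*\EE,\sigma^*\EE) \arrow[dl,"\tr\pi_*"] \\
& \mathbf{R}p_{S,*}\OO_S. &
\end{tikzcd}
\end{center}
Dualising along $p_X$ using $\sigma$-equivariant Grothendieck--Verdier duality (available because $\omega_{p_{\overline{X}}}$ is trivial on supports by Claim~\ref{relativediff} and $\sigma$ is $p_X$-linear) turns this into the corresponding commutative triangle for $b$, so $\sigma_*$ fixes the $\mathbf{R}p_{S,*}K_S[-1]$ summand as well.

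Since $\sigma_*^2 = \id$, the complex decomposes into $\pm 1$ eigensheaves. The previous two steps put the summands $\mathbf{R}p_{S,*}\OO_S$ and $\mathbf{R}p_{S,*}K_S[-1]$ entirely inside the $+1$ eigensheaf. The orthogonal complement $\mathbf{R}\mathcal{H}om_{p_X}(\EE,\EE)^\perp$ is then $\sigma_*$-stable because $\sigma_*$ commutes with the two splitting projections, so $\sigma_*$ is block-diagonal with identity on the last two blocks.

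The main obstacle is being careful with the identifications in the first step: one must verify that the canonical isomorphism $\pi_*\sigma^*\EE \cong \pi_*\EE = \mathsf{E}$ from Lem.~\ref{traceshiftspectral} is compatible with both the adjoint action by $\Psi$ and the $\mathcal{L}$-cancellation, in the sense that the resulting composite literally equals the trace map. All of this is formal, but requires tracking several naturality statements at once, in particular that the twisting line bundle $\mathcal{L}$, being pulled back from $\N$, is invisible to $\tr\pi_*$.
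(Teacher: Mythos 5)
Your proposal is correct and follows essentially the same route as the paper: commutativity with $\tr\pi_*$ via the conjugation-invariance of the trace (i.e.\ $\tr(\pi_*(\Psi^{-1}g\Psi))=\tr(\pi_*g)$) together with the cancellation of the twist $\mathcal{L}$, then Grothendieck--Verdier dualising to obtain the analogous triangle for $b$, and concluding diagonality formally from the splitting. The paper's proof is terser and does not spell out the compatibility of the identification $\pi_*\sigma^*\EE\cong\pi_*\EE$ that you rightly flag, but the logical content is identical.
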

\begin{proof}
	This is similar to the previous one and reduces to the fact that 
	\begin{equation*}
	\begin{tikzcd}
	\mathbf{R}\Homm_{p_{X}}(\EE,\EE)  \arrow[dr,"\tr \pi_*"]  & & \mathbf{R}\mathcal{H}om_{p_X}(\sigma^*\EE ,\sigma^*\EE) \arrow{ll}[swap]{\sigma_*} \arrow{dl}[swap]{\tr\pi_*}\\
	&\mathbf{R}p_{S_*}\OO_S&
	\end{tikzcd}
	\end{equation*}
	commutes because $\tr(\pi_*(\Psi^{-1}g\Psi))=\tr(\pi_*g)$.\\
	Dually, this is 
	\begin{equation*}
	\begin{tikzcd}
	\mathbf{R}\Homm_{p_{X}}(\EE,\EE) \arrow[rr,"\sigma_*"] & & \mathbf{R}\mathcal{H}om_{p_X}(\sigma^*\EE ,\sigma^*\EE) \\
	&\mathbf{R}p_{S,*}K_{S}[-1] \arrow{ur}{b} \arrow{ul}[swap]{b}&
	\end{tikzcd}
	\end{equation*}

	Then replace again the arrows by their duals.
\end{proof}
\begin{rmk}
	We end the section with the remark that we can write the natural maps $\lambda_*$ and $\sigma_*$ as diagonal maps $(\lambda_*\oplus 1 \oplus 1)$ and $(\sigma_*\oplus 1 \oplus 1)$ under the identification $$ \mathbf{R}\mathcal{H}om_{p_X}(\EE,\EE)\cong\mathbf{R}\mathcal{H}om_{p_X}(\EE,\EE)^\perp \oplus \mathbf{R}p_{S,*}K_{S}[-1]\oplus \mathbf{R}p_{S,*}\mathcal{O}_{S}$$ 
\end{rmk}
\subsection{Goal}
The goal of the next two sections is to motivate and to prove that the \textit{virtual differential} of $\iota$, i.e. the equivariant structure $\theta_{\iota}$ of the functor $$\mathbf{R}\mathcal{H}om_{p_X}(\EE,\EE) \mapsto \mathbf{R}\mathcal{H}om_{p_X}(\iota\EE,\iota\EE) $$
that commutes with the Atiyah class $\At_{\EE, \N}$ acts as $-1$ on $$\mathbf{R}p_{S,*}K_{S}[-1]\oplus \mathbf{R}p_{S,*}\mathcal{O}_{S}.$$
\newpage
\section{The determinant} \label{determinant}
\setcounter{subsection}{-1}
\subsection{Summary}
We define the equivariance for $\lambda$ compatible with $\At_{\EE}$, i.e. we construct a lift $\theta_\lambda: \mathbf{R}\mathcal{H}om_{p_X}(\EE,\EE)\rightarrow \mathbf{R}\mathcal{H}om_{p_X}(\lambda^*\EE,\lambda^*\EE)$ replacing the natural lift $\lambda_*$ of the previous section. We drop the subscript $\phi$ of $\E_\phi$ in this section.\\
As $\lambda([\E])=[\E\otimes \pi^*\det(\pi_*\E)^{-1}]$, we observe that the following diagram commutes: 
\begin{equation*}
\begin{tikzcd}
\N \arrow[r,"\det\pi_*"] \arrow[d,"\lambda"] & \mathbf{Pic}(S) \arrow[d,"\mathcal{L} \mapsto \mathcal{L}^{-1}"]\\
\N \arrow[r,"\det\pi_*"] &  \mathbf{Pic}(S) 
\end{tikzcd}
\end{equation*}
At tangent spaces for a fixed point $[\E]\in \N$, the differentials are given by 
\begin{equation*}
\begin{tikzcd}
\Ext^1(\E,\E) \arrow[r,"\tr\pi_*"] \arrow[d,"(d\lambda)_{[\E]}"] & H^1(\OO_S) \arrow[d,"\alpha\mapsto -\alpha"]\\
\Ext^1(\lambda\E,\lambda\E) \arrow[r,"\tr\pi_*"] &  H^1(\OO_S).
\end{tikzcd}
\end{equation*}
As $\Ext^1(\E,\E) \rightarrow H^1(\OO_S)$ is split, $d\lambda$ acts as $-1$ on $H^1(\OO_S)$ for \textit{all} points $[\E]$ of $\N$. However, recalling $\lambda^*\EE=\EE\otimes \LLL$ and restricting the natural map of Sec. \ref{naivelinebundle}
$$ \lambda_*: \pot \rightarrow \mathbf{R}\Homm_{p_X}(\lambda^*\EE , \lambda^*\EE)$$
to a closed point $[\E]\in \N$, gives in degree $1$ $$\lambda_*:\Ext^1(\E,\E) \rightarrow \Ext^1(\lambda\E,\lambda \E),$$ which is the identity for all $\E$ where $\det(\pi_*\E)\cong \OO_S$.  \\
We replace $\lambda_*$ by the correct map $\theta_{\lambda}$, whose action on $\mathbf{R}\mathcal{H}om_{p_X}(\EE,\EE)$ is now $-1$ on $\mathbf{R}p_{S,*}\mathcal{O}_{S}$, as motivated above. For this, we need the following ingredient: 
\subsection{Deformations of  the determinant}
By \cite[Prop.3.2]{STV}, there is a commutative square
\begin{equation}\label{TOEN}
\begin{tikzcd}
[column sep=14ex]
\RHomm_{p_{X}}(\EE,\EE)[1] \arrow[r,"\tr  \pi_*"] &\mathbf{R}p_{S,*}\OO_ {S}[1]\\
\mathbf{T}_{\N}\arrow[u,"\At_{\EE}"] \arrow[r,"\det_*"]& \det^*\mathbf{T}_{\mathbf{Pic}(S)} \arrow{u}[swap]{\det^*\At_{\det(\pi_*\EE)}}
\end{tikzcd}
\end{equation}
relating deformations of $[\E] \in \N$ with the one of $[\det(\pi_*\E) ]\in \mathbf{Pic}(S)$.
\subsection{The differential of $\lambda$}
\begin{dfn}
	We define $$\theta_\lambda: \pot \rightarrow \mathbf{R}\Homm_{p_X}(\lambda^*\EE , \lambda^*\EE)$$ as $$f\mapsto \lambda_* f\otimes 1 -  \pi^*(\tr(\pi_*f)\cdot \id)\otimes 1 $$
	i.e. it is the natural map $\lambda_*$  of Sec. \ref{naivelinebundle}, corrected by the differential of the determinant, according to Diag. \ref{TOEN} above.
\end{dfn}
\begin{rmk}
	Corresponding to the splitting $$\mathbf{R}\mathcal{H}om_{p_X}(\EE,\EE)\cong \mathbf{R}\mathcal{H}om_{p_X}(\EE,\EE)_0\oplus\mathbf{R}p_{S,*}\mathcal{O}_{S},$$ we may rewrite $$f=f_0\oplus \frac{1}{2}\pi^*(\tr(\pi_*f))\cdot \id$$ and observe $$\theta_{\lambda}: f_0\oplus \frac{1}{2}\pi^*(\tr(\pi_*f))\cdot \id \mapsto \lambda_*f_0\oplus -\frac{1}{2}\pi^*(\tr(\pi_*f))\cdot \id,$$
	so $\theta_\lambda=\lambda_*\oplus (-1)$ is diagonal. 
\end{rmk}
\begin{lem}\label{diagonallambda}
	We have $\lambda^*\theta_\lambda\circ \theta_\lambda=\id$, i.e.
	\begin{center}
		\begin{tikzcd}
		\pot [1]\arrow[r,"\theta_{\lambda}"] \arrow[dr,equal]& \RHomm_{p_X}(\lambda^*\EE ,\lambda^*\EE)[1] \arrow[d,"\lambda^*\theta_\lambda"]\\
		&\RHomm_{p_X}(\lambda^{*,2}\EE,\lambda^{*,2}\EE)[1] \\
		\end{tikzcd}
	\end{center}
	commutes
\end{lem}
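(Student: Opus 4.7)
The plan is to verify the diagonal diagram by decomposing both $\theta_\lambda$ and $\lambda^*\theta_\lambda$ along the trace-identity splitting
\[
\mathbf{R}\mathcal{H}om_{p_X}(\EE,\EE) \;\cong\; \mathbf{R}\mathcal{H}om_{p_X}(\EE,\EE)_0 \oplus \mathbf{R}p_{S,*}\OO_S
\]
(and its $\lambda^*$-pullback), then checking each summand separately. As observed in the remark preceding the lemma, under this splitting $\theta_\lambda$ is the diagonal map $\lambda_* \oplus (-1)$. Since the same splitting exists for $\mathbf{R}\mathcal{H}om_{p_X}(\lambda^*\EE,\lambda^*\EE)$ (by applying $\lambda^*$) and $\theta_\lambda$ is built in exactly the same way from $\lambda^*\EE$ in place of $\EE$, the pulled-back map $\lambda^*\theta_\lambda$ likewise decomposes as $\lambda^*\lambda_* \oplus (-1)$.

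On the trace summand $\mathbf{R}p_{S,*}\OO_S$ both copies of $\theta_\lambda$ act as multiplication by $-1$, so the composition restricts to $(-1)(-1) = \id$. For the trace-free summand we use the content of Sec.~\ref{naivelinebundle}: the natural map $\lambda_*\colon g \mapsto g\otimes 1_{\LLL}$ is an isomorphism whose inverse is precisely $\lambda^*$, obtained by cancelling $\LLL^\vee \otimes \LLL \cong \OO$. Hence $\lambda^*\lambda_* = \id$ on $\mathbf{R}\mathcal{H}om_{p_X}(\EE,\EE)_0$. Combining the two summands, $\lambda^*\theta_\lambda \circ \theta_\lambda = \id$ as required.

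The only point requiring care — and the natural place for the proof to get tangled — is to make sure the splitting is respected by $\lambda^*$ and that the correction term $-\pi^*(\tr(\pi_*f)\cdot \id)\otimes 1$ really lands in the trace summand after pullback. The former is immediate from the fact that the trace and trace-insertion maps
\[
\mathbf{R}\mathcal{H}om_{p_X}(\EE,\EE) \xrightarrow{\tr\pi_*} \mathbf{R}p_{S,*}\OO_S,
\qquad \mathbf{R}p_{S,*}\OO_S \xrightarrow{\tfrac12\pi^*(-\cdot\id)} \mathbf{R}\mathcal{H}om_{p_X}(\EE,\EE)
\]
are preserved by the natural $\lambda_*$, as recorded in the claim of Sec.~\ref{naivelinebundle}; the latter is by construction, since $\pi^*(\tr(\pi_*f)\cdot\id)$ is the image of $\tr(\pi_*f) \in \mathbf{R}p_{S,*}\OO_S[1]$ under the splitting section (up to the factor of $2$ already built into the normalisations).
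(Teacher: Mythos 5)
Your proof is correct and takes essentially the same route as the paper: the paper's own (two-line) argument likewise invokes the diagonal form $\theta_\lambda = \lambda_*\oplus(-1)$ from the preceding remark and concludes $\theta_\lambda^2=\id$, using $\lambda^2=\id$ to identify the target. You have simply made explicit the two checks the paper leaves implicit — that $(-1)^2=\id$ on the trace summand and that $\lambda^*\lambda_*=\id$ on the trace-free summand via cancelling $\LLL^\vee\otimes\LLL\cong\OO$ — which is a faithful elaboration rather than a different proof.
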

\begin{proof}
	As $\lambda^2=\id$, the vertical arrrow maps indeed to  $\mathbf{R}\mathcal{H}om_{p_X}(\EE,\EE)[1]$.\\
	As $\theta_{\lambda}=\lambda_*\oplus (-1)$, we get $\theta_{\lambda}^2=\id$. 
\end{proof}
\subsection{The Atiyah class}
Via $\At_{\EE}$, $\theta_{\lambda}$ relates to the natural differential map $\lambda_*:\mathbf{T}_\N \rightarrow \lambda^*\mathbf{T}_\N$:
\begin{lem}\label{finallinebundle}
	There is a commutative square
	\begin{equation} 
	\begin{tikzcd}
	[column sep=14ex]
	\pot[1] \arrow[r,"\theta_\lambda"] &\mathbf{R}\Homm_{p_X}(\lambda^*\EE,\lambda^*\EE)[1]\\
	\mathbf{T}_{\N}\arrow[u,"\At_{\EE}"] \arrow[r,"\lambda_*"] & \lambda^*\mathbf{T}_{\N} \arrow[u,"\lambda^*\At_{\EE}"]
	\end{tikzcd}
	\end{equation}
\end{lem}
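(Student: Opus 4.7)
The plan is to identify both compositions in the square with the partial Atiyah class of $\lambda^*\EE$ and then expand this Atiyah class using the Leibniz rule together with the identification $\lambda^*\EE \cong \EE \otimes \LLL$, $\LLL = \pi^*\det(\pi_*\EE)^{-1}$. I work with the dual formulation of the Atiyah class throughout, i.e.\ $\At_\EE : \mathbf{T}_\N \to \pot[1]$.

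First I would handle the lower path. By functoriality of the Atiyah class (Sec.~\ref{fullAtiyah}), applied to $\lambda : \N \to \N$ and to the partial Atiyah class $\At_{\EE,\N}$ on $\N$, one has $\lambda^*\At_\EE \circ \lambda_* = \At_{\lambda^*\EE}$ as morphisms $\mathbf{T}_\N \to \RHomm_{p_X}(\lambda^*\EE,\lambda^*\EE)[1]$. Thus the lower-right route equals $\At_{\lambda^*\EE}$.

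Next I would compute the upper-right route. Using the defining isomorphism $\lambda^*\EE \cong \EE \otimes \LLL$ and the multiplicativity (Leibniz rule) of the Atiyah class under tensor products, I get
\begin{equation*}
\At_{\EE \otimes \LLL} \;=\; \At_\EE \otimes 1_\LLL \;+\; 1_\EE \otimes \At_\LLL
\end{equation*}
as maps $\mathbf{T}_\N \to \RHomm_{p_X}(\EE\otimes\LLL,\EE\otimes\LLL)[1]$. The first summand is precisely $\lambda_* \circ \At_\EE$, since the natural lift $\lambda_*$ of Sec.~\ref{naivelinebundle} is exactly $g \mapsto g \otimes 1_\LLL$. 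For the second summand, $\LLL$ is a line bundle pulled back via $\pi \times \id$, so $\At_\LLL = \pi^*\At_{\det(\pi_*\EE)^{-1}}$ and $1_\EE \otimes \At_\LLL$ acts as $\pi^*\At_{\det(\pi_*\EE)^{-1}} \cdot \id_\EE$.

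Finally I invoke the STV diagram \ref{TOEN} to compute $\At_{\det(\pi_*\EE)}$. It identifies $\det^*\At_{\det(\pi_*\EE)} \circ \det_*$ with $\tr\pi_* \circ \At_\EE$, and because $\LLL$ carries $\det(\pi_*\EE)^{-1}$ the sign flips:
\begin{equation*}
1_\EE \otimes \At_\LLL \;=\; -\,\pi^*\bigl(\tr(\pi_* \At_\EE)\bigr)\cdot \id_\EE .
\end{equation*}
Combining the three displays yields
$\At_{\lambda^*\EE} = \lambda_*\At_\EE - \pi^*(\tr\pi_*\At_\EE)\cdot \id_\EE = \theta_\lambda \circ \At_\EE$,
which is exactly the commutativity claimed. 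The main obstacle is honest bookkeeping of the Leibniz rule in the derived partial setting (Grothendieck--Verdier duality along $p_X$ and the passage $X \subset \overline{X}$ of Sec.~\ref{partialAtdef}), together with the sign coming from inverting the determinant line bundle; once these are in place, the statement reduces to the STV identity \ref{TOEN} applied termwise.
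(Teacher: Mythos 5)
Your proposal is correct and follows essentially the same route as the paper's proof: functoriality of the Atiyah class to identify the lower-right composite with $\At_{\lambda^*\EE}=\At_{\EE\otimes\LLL}$, the Leibniz rule to expand $\At_{\EE\otimes\LLL}=\At_{\EE}\otimes 1+\At_{\LLL}\cdot\id\otimes 1$, and Diag.~\ref{TOEN} to identify $\At_{\LLL}=-\pi^*(\tr(\At_{\pi_*\EE}))$ so that the sum equals $\theta_\lambda\circ\At_{\EE}$. No substantive difference.
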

\begin{proof}
	By functoriality of the Atiyah class, following the RHS up gives $\lambda^*\At_{\EE}\circ \lambda_*=\At_{\lambda^*\EE}=\At_{\EE\otimes \LLL}$, so we are left to prove that $\theta_\lambda\circ \At_{\EE}=\At_{\EE\otimes \LLL}$. We compute
	$$\theta_\lambda \circ \At_{\EE}=\At_{\EE}\otimes 1 - \pi^*(\tr(\At_{\pi_*\EE})\cdot \id)\otimes 1$$
	and $$\At_{\EE\otimes\LLL}=\At_{\EE}\otimes 1 + \At_{\LLL} \cdot \id \otimes 1.$$ Now by Diag. \ref{TOEN}, relating $\det$ and $\tr$ we have
	$$-\pi^*(\tr(\At_{\pi_*\EE}))=\pi^*\At_{\det(\pi_*\EE)^{-1}}=\At_{\pi^*\det(\pi_*\EE)^{-1}}=\At_\LLL$$	
	and the claim follows. 	
\end{proof}
\subsection{The equivariance}
We can now prove that $\At_{\EE}$ is compatible with above constructed map $\theta_{\lambda}$.
\begin{cor}
	$\At_{\EE}$ is $\lambda$-equivariant in the sense of Def. \ref{equivdef}. 
\end{cor}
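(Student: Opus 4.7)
The plan is to verify the three pieces required by Definition~\ref{equivdef} directly from the two lemmas just established. Concretely, $\lambda$-equivariance of $\At_{\EE}$ asks for (i) the commutative square relating $\At_{\EE}$ and $\lambda^*\At_{\EE}$ through $\lambda_*$ on the bottom and $\theta_\lambda$ on top; (ii) a triangle on $\mathbf{T}_\N$ expressing that two applications of $\lambda_*$ compose to the identity; (iii) an analogous triangle on $\mathbf{R}\Homm_{p_X}(\EE,\EE)[1]$ for $\theta_\lambda$; together with the compatibility that (ii) is intertwined with (iii) via $\At_{\EE}$ and $\lambda^*\At_{\EE}$.

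For (i), the square is the content of Lemma~\ref{finallinebundle} and requires no further work. For (ii), the triangle on $\mathbf{T}_\N$ is automatic as recorded in the remark following Definition~\ref{equivdef}: functoriality of the tangent complex together with $\lambda^2=\id_\N$ makes the composite $\mathbf{T}_\N \xrightarrow{\lambda_*}\lambda^*\mathbf{T}_\N \xrightarrow{\lambda^*(\lambda_*)} (\lambda^{*,2})\mathbf{T}_\N$ agree with the canonical identity isomorphism. For (iii), Lemma~\ref{diagonallambda} gives precisely this triangle: the decomposition $\theta_\lambda = \lambda_*\oplus(-1)$ with respect to the splitting of Section~\ref{splitting} immediately yields $\lambda^*\theta_\lambda \circ \theta_\lambda = \id$.

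What remains is the compatibility, which unpacks into commutativity of the outer rectangle
\begin{center}
\begin{tikzcd}
\mathbf{T}_\N \arrow[r,"\lambda_*"]\arrow[d,"\At_{\EE}"] & \lambda^*\mathbf{T}_\N \arrow[r,"\lambda^*(\lambda_*)"] \arrow[d,"\lambda^*\At_{\EE}"] & (\lambda^{*,2})\mathbf{T}_\N \arrow[d,"\lambda^{*,2}\At_{\EE}"]\\
\pot[1] \arrow[r,"\theta_\lambda"] & \mathbf{R}\Homm_{p_X}(\lambda^*\EE,\lambda^*\EE)[1] \arrow[r,"\lambda^*\theta_\lambda"] & \mathbf{R}\Homm_{p_X}(\lambda^{*,2}\EE,\lambda^{*,2}\EE)[1].
\end{tikzcd}
\end{center}
The left square is Lemma~\ref{finallinebundle}; the right square is its pullback along $\lambda$, obtained by applying $\lambda^*$ to the left square. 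Hence no new computation is needed and the corollary reduces to a bookkeeping assembly of Lemmas~\ref{finallinebundle} and \ref{diagonallambda}. I do not expect a genuine obstacle; the only care required is to use consistently the canonical identification $\lambda^{*,2}\EE \cong \EE$ coming from Remark~\ref{squareid} and from the evaluation $\LLL^\vee\otimes\LLL\cong\OO$ implicit in the definitions of $\lambda_*$ and $\theta_\lambda$, so that the two triangles close canonically.
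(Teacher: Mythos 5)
Your proposal is correct and follows essentially the same route as the paper: the triangle on $\mathbf{T}_\N$ is automatic, the triangle on $\mathbf{R}\Homm_{p_X}(\EE,\EE)[1]$ is Lemma~\ref{diagonallambda}, and the compatibility of the two via $\At_{\EE}$ is Lemma~\ref{finallinebundle} (with the right-hand square being its $\lambda^*$-pullback). The only difference is that you spell out the outer rectangle and the canonical identification $\lambda^{*,2}\EE\cong\EE$ slightly more explicitly than the paper does, which is harmless.
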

\begin{proof}
	We see that the commutative triangle
	\begin{center}
		\begin{tikzcd}
		\mathbf{T}_\N \arrow[r,"\lambda_*"] \arrow[dr,equal] & \lambda^*\mathbf{T}_\N \arrow[d,"\lambda^*(\lambda_*)"]\\
		& (\lambda^{*,2})\mathbf{T}_\N \\
		\end{tikzcd}
	\end{center}
	maps via $\At_{\EE}$ to the triangle
	\begin{center}
		\begin{tikzcd}
		\pot [1]\arrow[r,"\theta_{\lambda}"] \arrow[dr,equal]& \RHomm_{p_X}(\lambda^*\EE,\lambda^*\EE)[1] \arrow[d,"\lambda^*\theta_\lambda"]\\
		&\RHomm_{p_X}(\lambda^{*2}\EE,\lambda^{*2}\EE)[1] \\
		\end{tikzcd}
	\end{center}
	Indeed, the second triangle commutes by  Lem. \ref{diagonallambda} and the compatibility between both triangles via $\At_{\EE}$ follows from Lem. \ref{finallinebundle}. This makes $\At_{\EE}$ $\lambda$-equivariant in the sense of Def. $\ref{equivdef}$.
\end{proof}
 
\begin{rmk}
	We see that $\lambda([\E])=[\E]$ on $\N^\perp$, as $\det(\pi_*\E)\cong \OO_S$. Hence by Lem. \ref{diagonallambda}, $\theta_\lambda=1\oplus(-1)$ acts genuinely on $\mathbf{R}\Homm_{p_X}(\EE,\EE)[1]|_{\N^\perp}$, giving a splitting into $\pm 1$ eigensheaves $$\mathbf{R}\Homm_{p_X}(\EE,\EE)[1]_0|_{\N^\perp}\oplus \mathbf{R}p_{S,*}\OO_S[1]|_{\N^\perp}.$$
\end{rmk}

\section{The trace}\label{trace}
\setcounter{subsection}{-1}
\subsection{Summary}
Having dealt with the determinant, we now lift $\sigma $ to $\mathbf{R}\mathcal{H}om_{p_X}(\EE,\EE)$:\\
Recalling Def. \ref{spectralfamilies}, $\sigma:\N \rightarrow \N$ was on closed points acting as $$\sigma([\E_\phi]) =[\sigma_{{\tr \phi}}^*\E_\phi]=[\E_{\phi^\mathfrak{t}}].$$ Thus, there is a commutative square
\begin{equation*}
\begin{tikzcd}
\N \arrow[r,"\tr"] \arrow[d,"\sigma"] & \Gamma(K_S) \arrow[d,"\alpha \mapsto -\alpha"]\\
\N \arrow[r,"\tr"] &  \Gamma(K_S),
\end{tikzcd}
\end{equation*}
because $\tr(\phi^\mathfrak{t})=-\tr(\phi)$. At the level of tangent spaces for a fixed class $[\E_\phi] \in \N$, this gives the diagram
\begin{equation}\label{observation}
\begin{tikzcd}
\Ext^1(\E_{\phi},\E_{\phi}) \arrow[r,"d\tr"] \arrow[d,"(d\sigma_{[\E_\phi]}"] & H^0(K_S) \arrow[d,"\alpha \mapsto -\alpha"]\\
\Ext^1(\sigma_{{\tr \phi}}^*\E_\phi,\sigma_{{\tr \phi}}^*\E_\phi) \arrow[r,"d\tr"] &  H^0(K_S).
\end{tikzcd}
\end{equation}
As we have a splitting $$\Ext^1(\E_\phi,\E_\phi) \cong \Ext^1(\E_{\phi},\E_{\phi})^0\oplus H^0(K_S),$$ $d\sigma_{\tr\Phi}$ acts $-1$ on $H^0(K_S)$ for all points $[\E_{\phi}]$ of $\N$. \\
However, we observe that the natural map induced by pullback
$$\sigma_{{\tr \phi},*}: \Ext^1(\E_\phi,\E_\phi) \rightarrow \Ext^1(\sigma_{{\tr \phi}}^*\E_\phi,\sigma_{{\tr \phi}}^*\E_\phi) $$
acts trivially whenever $\E_\phi$ has centre of mass zero (i.e. $\tr(\phi)=0$). \\
We conclude that the construction of the linearisation (Cor. \ref{adjoint}) $$\sigma_{*}: \mathbf{R}\mathcal{H}om_{p_X}(\EE,\EE) \cong \mathbf{R}\mathcal{H}om_{p_X}(\sigma^*\EE,\sigma^*\EE),$$
commuting with $\At_{\EE}: \mathbf{T}_\N \rightarrow  \mathbf{R}\mathcal{H}om_{p_X}(\EE,\EE)$ needs to be modified.\\
More precisely, we construct the correct map $\theta_{\sigma}$, whose action on $\mathbf{R}\mathcal{H}om_{p_X}(\EE,\EE)[1]$ will be $-1$ on the summand $\mathbf{R}p_{S,*}K_{S}$, generalising Diag. \ref{observation} to families.
\begin{rmk}\label{tangentaffine}
	As $\Gamma(K_S)$ is an affine space, we may identify $$\mathbf{T}_{\Gamma(K_S)}\cong H^0(K_S)\otimes \OO_{\Gamma(K_S)},$$  and we see $\mathbf{T}_{\Gamma(_S), \alpha} \cong H^0(K_S)$ for each point $\alpha \in  \Gamma(K_S)$. 
\end{rmk}

\subsection{First step}
We recall the diagram stated in Cor. \ref{sigmalinear} that we need to modify:
\begin{equation*}
\begin{tikzcd}
\mathbf{R}\Homm_{p_X}(\EE,\EE)[1] \arrow[r,"\sigma_*"] & \mathbf{R}\Homm_{p_X}(\sigma^*\EE,\sigma^*\EE)[1]\\
\mathbf{T}_\N \arrow[r,"\sigma_*"] \arrow[u,"\At_{\EE}"] &	\sigma^*\mathbf{T}_\N \arrow[u,"\sigma^*\At_{\EE}"] \\
\end{tikzcd}.
\end{equation*} 
	The right adaption is writing the map $\sigma$ as a composition $$ \N \xrightarrow{(\tr,\id) } {\Gamma(K_S)}\times \N \xrightarrow{translation } \N$$ such that its differential is $$\tilde{\sigma}_*(v)=\sigma_{*}(v)+\tr_*(v)\cdot \id,$$
	so it contains a correction term differentiating the trace.
	This will be established in the next step. In the third step, we connect this map to the (virtual) differential of the trace $\mathbf{R}\mathcal{H}om_{p_X}(\EE,\EE)[1]\rightarrow \mathbf{R}p_{S,*}K_{S}$ and construct the correct lift $\theta_{\sigma}$ to $\mathbf{R}\mathcal{H}om_{p_X}(\EE,\EE)[1]$ that acts as $-1$ on $ \mathbf{R}p_{S,*}K_{S}.$
\subsection{Second step}
We review the construction of the trace shift $\sigma$ on spectral sheaves from Def. $\ref{spectralfamilies}$: We will make more explicit that this map factors over $\Gamma(K_S)$. More precisely, deformations of $\tr\phi \in \Gamma(K_S)$ induce deformations of $[\E_{\phi}] \in \N$ in the following sense:\\ 
We recall that any section $\alpha \in \Gamma(K_S)$ acts on a Higgs pair $(E,\phi)$ via "translating $\phi$ by $\alpha$", i.e.  the map $$ (E,\phi) \mapsto (E,\phi-\alpha \cdot\id)$$
induces the translation
$$\sigma_{\Gamma(K_S)}: \Gamma(K_S)\times\N \rightarrow\N.$$
In terms of their corresponding spectral sheaves $\E_\phi$ on $X$, this is expressed via the pullback $$(\alpha,[\E_\phi]) \mapsto [\sigma_\alpha^*\E_\phi]=[\E_{\phi-\alpha \cdot \id}],$$ where $\sigma_{\alpha}: X\rightarrow X$ is on local coordinates $(s,t) \in X$ translation on the fibres by $\alpha$,  $$(s,t)\mapsto (s,t-\alpha_s).$$
Thus we may rewrite $\sigma$ as
\begin{equation*} \label{deftr}
\sigma: \N \xrightarrow{\tr \times \id} \Gamma(K_S)\times \N \xrightarrow{\sigma_{\Gamma(K_S)}} \N
\end{equation*}
sending $$[\E_\phi ]\mapsto (\tr\phi,[\E_\phi]) \mapsto [\sigma_{\tr\phi}^*\E_\phi]=[\E_{\phi^\mathfrak{t}}]$$ This agrees with the previous definition on points, but now factors over $\Gamma(K_S)$, inducing differential maps
\begin{equation}\label{difftrace}
\tilde{\sigma}: \mathbf{T}_\N \rightarrow (\tr \times \id)^*[\mathbf{T}_{\Gamma(K_S)}\oplus \mathbf{T}_\N] \rightarrow \sigma^*\mathbf{T}_\N
\end{equation}
and we claim that 
\begin{claim}
	$\tilde{\sigma}_*(v)=\sigma_*(v)+\tr_*(v)\cdot \id$
\end{claim}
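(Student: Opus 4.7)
The plan is to apply the chain rule to the factorisation $\sigma = \sigma_{\Gamma(K_S)} \circ (\tr \times \id)$ and then use the product structure of $\Gamma(K_S) \times \N$ to split the differential of $\sigma_{\Gamma(K_S)}$ into two independent contributions that can be identified separately.

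First I would unwind $(\tr \times \id)_*$: since the target is a product, the product rule sends $v \in \mathbf{T}_\N|_{[\E_\phi]}$ to $(\tr_*(v), v) \in \tr^*\mathbf{T}_{\Gamma(K_S)} \oplus \mathbf{T}_\N$. Next I would analyse $\sigma_{\Gamma(K_S), *}$ at $(\tr\phi, [\E_\phi])$ along each factor. The restriction to $\{\tr\phi\} \times \N$ is exactly the pullback $\sigma_{\tr\phi}^*$ studied in Sec.~\ref{naivesigma}, whose differential is the map $\sigma_*$ of Cor.~\ref{sigmalinear}; fed with the $\N$-component $v$, this yields the summand $\sigma_*(v)$.

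The substantive step is the $\Gamma(K_S)$-direction. With $[\E_\phi]$ fixed, the restricted map $\alpha \mapsto [\E_{\phi - \alpha \cdot \id}]$ has differential at $\alpha_0 = \tr\phi$ sending $w \in H^0(K_S) \cong \mathbf{T}_{\Gamma(K_S)}|_{\alpha_0}$ (Rmk.~\ref{tangentaffine}) to the first-order deformation of $\E_{\phi^{\mathfrak{t}}}$ given by the scalar infinitesimal Higgs field $w \cdot \id$, viewed as a class in $\Ext^1(\E_{\phi^{\mathfrak{t}}}, \E_{\phi^{\mathfrak{t}}})$. Applied to the $\Gamma(K_S)$-component $\tr_*(v)$ of $(\tr \times \id)_*(v)$, this produces the second summand $\tr_*(v) \cdot \id$, and summing the two contributions gives the formula. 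The main obstacle is the bookkeeping: one must verify that this scalar-Higgs deformation $w \cdot \id$ is precisely the image of $w$ under the splitting embedding $\mathbf{R}p_{S,*}K_S[-1] \hookrightarrow \mathbf{R}\mathcal{H}om_{p_X}(\EE,\EE)$ of Def.~\ref{globalsplitting}, i.e.\ that the two natural ways of injecting $H^0(K_S)$ into $\Ext^1$-classes agree, with the correct sign coming from the convention $\sigma_\alpha: (s,t) \mapsto (s, t - \alpha_s)$ of Def.~\ref{sigma}.
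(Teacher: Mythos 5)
Your route is the same one the paper takes: factor $\sigma$ as $\sigma_{\Gamma(K_S)}\circ(\tr\times\id)$, compute $(\tr\times\id)_*(v)=(\tr_*v,v)$ by the product rule, and split the differential of $\sigma_{\Gamma(K_S)}$ at $(\tr\phi,[\E_\phi])$ into its two partial contributions, identifying the $\N$-direction partial with the pullback map $\sigma_*$ of Cor.~\ref{sigmalinear}. This is not a different proof, only a rephrasing of the paper's.

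The step you defer to ``bookkeeping'' is, however, exactly where the entire content of the paper's proof sits, and as written your account of it has the wrong sign. The partial derivative of $\alpha\mapsto[\E_{\phi-\alpha\cdot\id}]$ in the direction $w$ is not the class $w\cdot\id$: the curve $\epsilon\mapsto[\E_{\phi-(\tr\phi+\epsilon w)\cdot\id}]$ is the pullback by $\sigma_{\tr\phi}$ of the deformation $-w\cdot\id$ of $\E_\phi$, so the raw contribution of the $\Gamma(K_S)$-direction is $\sigma_{*}(-w\cdot\id)$. This is why the paper writes the full differential as $(\alpha,v)\mapsto\sigma_{\Gamma(K_S),*}(v-\alpha\cdot\id)$ and only afterwards converts $-\sigma_*(\tr_*(v)\cdot\id)$ into $+\tr_*(v)\cdot\id$ by means of the identity $\sigma_*\circ\tr_*=-\tr_*$ (the differential of $\tr\circ\sigma=-\tr$; see also Lem.~\ref{helplemma}), which forces $\sigma_*$ to reverse the sign of the scalar summand $w\cdot\id$. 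Your proposal asserts the contribution is $+\tr_*(v)\cdot\id$ outright and gestures at the convention of Def.~\ref{sigma} for the sign, but the naive derivative of $\phi-\alpha\cdot\id$ with respect to $\alpha$ is $-w\cdot\id$, so without the intermediate $\sigma_*$ and the identity $\sigma_*(w\cdot\id)=-w\cdot\id$ you would land on $\sigma_*(v)-\tr_*(v)\cdot\id$ instead of the stated formula. Since the whole claim is a sign formula, you need to carry out this verification explicitly rather than reduce it to an unverified compatibility with the splitting of Def.~\ref{globalsplitting}; it is the only nontrivial assertion in the argument.
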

\begin{proof}
	Using Rmk. \ref{tangentaffine}, we may write over $\Gamma(K_S)\times \N$ the differential of $\sigma_{\Gamma(K_S)}$ as
	$$\sigma_{\Gamma(K_S),*}: \mathbf{T}_{\Gamma(K_S)} \oplus \mathbf{T}_\N \rightarrow \sigma_{\Gamma(K_S)}^*\mathbf{T}_\N; \hspace{5pt } (\alpha,v) \mapsto \sigma_{\Gamma(K_S),*}(v-\alpha \cdot \id).$$
	As $\tr^*\mathbf{T}_{\Gamma(K_S)}\cong H^0(K_S)\otimes \OO$, pulling back by $(\tr \times \id)$ gives the second arrow in Equ. \ref{difftrace}, since $(\tr \times \id)^*\sigma_{\Gamma(K_S)}^*=\sigma^*$. So pre-composing with $$\mathbf{T}_\N \rightarrow (\tr \times \id)^*[\mathbf{T}_{\Gamma(K_S)}\oplus \mathbf{T}_\N]; \hspace{5pt }v \mapsto (\tr_*v,v)$$
	gives $$ v \mapsto \sigma_{*}(v-\tr_*(v)\cdot \id)= \sigma_{*}(v)+\tr_*(v)\cdot \id$$
	as $ \sigma_{*}\tr_*=-\tr_*$. 
\end{proof}
\begin{lem}\label{helplemma}
	$\tilde{\sigma}_*$ again has square equal to identity on $\mathbf{T}_\N$. Furthermore, we have $\tilde{\sigma}_*\circ \lambda_*=\lambda_*\circ \tilde{\sigma}_*$.
\end{lem}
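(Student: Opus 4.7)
The plan is a direct diagram chase in two steps, exploiting the explicit formula $\tilde{\sigma}_*(v) = \sigma_*(v) + \tr_*(v)\cdot\id$ established in the preceding claim, together with elementary identities for the action of $\sigma$ and $\lambda$ on the trace and identity directions.

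For the square-identity statement, I will expand
\[
\sigma^*\tilde{\sigma}_*\bigl(\tilde{\sigma}_*(v)\bigr)=\sigma^*\sigma_*\bigl(\sigma_*(v)+\tr_*(v)\cdot\id\bigr)+\sigma^*\tr_*\bigl(\sigma_*(v)+\tr_*(v)\cdot\id\bigr)\cdot\id
\]
and evaluate each of the four resulting pieces. The first piece gives $v$ since $\sigma^2=\id$ on $\N$. The fourth piece contributes $2\,\tr_*(v)\cdot\id$, because $\tr$ of the identity endomorphism in rank $2$ is multiplication by $2$. For the remaining two pieces I will use the identities obtained by differentiating the relation $\tr\circ\sigma=-\tr$ (which gives $\tr_*\circ\sigma_*=-\tr_*$, hence the third piece equals $-\tr_*(v)\cdot\id$) and the observation that the identity direction deforms as $\sigma_*(\alpha\cdot\id)=-\alpha\cdot\id$ (an infinitesimal shift of $\phi$ by $\epsilon\alpha\cdot\id$ produces $\tr(\phi_\epsilon)=\tr(\phi)+2\epsilon\alpha$, whence $\sigma$ sends it to $\phi^{\mathfrak{t}}-\epsilon\alpha\cdot\id$). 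The second piece therefore equals $-\tr_*(v)\cdot\id$, and the four contributions sum to $v-\tr_*(v)\cdot\id-\tr_*(v)\cdot\id+2\tr_*(v)\cdot\id=v$.

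For commutativity with $\lambda_*$, I will simply verify that
\[
\tilde{\sigma}_*(\lambda_*(v))=\sigma_*\lambda_*(v)+\tr_*(\lambda_*(v))\cdot\id
\quad\text{and}\quad
\lambda_*(\tilde{\sigma}_*(v))=\lambda_*\sigma_*(v)+\lambda_*(\tr_*(v)\cdot\id)
\]
agree term by term. The equality of the first summands is exactly $\sigma_*\lambda_*=\lambda_*\sigma_*$, obtained by differentiating $\sigma\lambda=\lambda\sigma$ (Lem.~\ref{commutesigmalambda}). The equality of the second summands reduces to two facts: $\tr_*\circ\lambda_*=\tr_*$ (since the twist $(E,\phi)\mapsto(E\otimes\det(E)^{-1},\phi\otimes 1)$ leaves $\tr\phi$ unchanged), and $\lambda_*(\alpha\cdot\id)=\alpha\cdot\id$ (since twisting by a line bundle commutes with scalar deformations of the Higgs field).

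The only genuine subtlety is bookkeeping, namely remembering that $\tilde{\sigma}_*(v)$ lives in $\sigma^*\mathbf{T}_\N$ so that the second application of $\tilde{\sigma}_*$ must be understood as $\sigma^*\tilde{\sigma}_*:\sigma^*\mathbf{T}_\N\to\sigma^{*2}\mathbf{T}_\N=\mathbf{T}_\N$, and analogously making sure the identity-direction elements $\tr_*(v)\cdot\id$ are interpreted in the correct pullback; once this is set up, the verification is the short algebraic calculation above. I do not anticipate a technical obstacle beyond this bookkeeping, since all needed infinitesimal identities follow from the point-level formulas $\sigma\colon(E,\phi)\mapsto(E,\phi^{\mathfrak{t}})$ and $\lambda\colon(E,\phi)\mapsto(E\otimes\det(E)^{-1},\phi\otimes 1)$.
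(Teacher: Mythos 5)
Your proposal is correct and follows essentially the same route as the paper: the paper's own proof is a two-line version of exactly your expansion, citing $\sigma_*^2=\id$ and $\sigma_*\circ\tr_*=\tr_*\circ\sigma_*=-\tr_*$, and your four pieces evaluate to the same contributions $v$, $-\tr_*(v)\cdot\id$, $-\tr_*(v)\cdot\id$ and $+2\,\tr_*(v)\cdot\id$; your term-by-term check of the $\lambda$-commutativity (via $\tr_*\circ\lambda_*=\tr_*$ and $\lambda_*(\alpha\cdot\id)=\alpha\cdot\id$) is more explicit than the paper's bare appeal to $\sigma\lambda=\lambda\sigma$, although that appeal already suffices once one accepts that $\tilde{\sigma}_*$ and $\lambda_*$ are the honest differentials of $\sigma$ and $\lambda$. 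One caveat concerns your justification of the second piece: the parenthetical computation showing that $\sigma$ carries $\phi+\epsilon\alpha\cdot\id$ to $\phi^{\mathfrak{t}}-\epsilon\alpha\cdot\id$ computes the \emph{corrected} differential $\tilde{\sigma}_*(\alpha\cdot\id)$, not the summand $\sigma_*(\alpha\cdot\id)$ appearing in the decomposition $\tilde{\sigma}_*=\sigma_*+\tr_*(\cdot)\cdot\id$; combined with your own $\tr_*(\alpha\cdot\id)=2\alpha$ it would force $\sigma_*(\alpha\cdot\id)=-3\alpha\cdot\id$, and then the four contributions would no longer sum to $v$. The value $\sigma_*(\alpha\cdot\id)=-\alpha\cdot\id$ that you actually plug in is the one the paper uses (it is the content of the identity ``$\sigma_*\circ\tr_*=-\tr_*$'', that is, $\sigma$ acting by $-1$ on the $\Gamma(K_S)$-coordinate transported through $\id_*$), so the computation closes exactly as in the paper; it should simply be justified from that identity rather than from the point-level formula for the full map $\sigma$.
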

\begin{proof}
	In fact, we have $\sigma_*^2=\id$. Then note $\sigma_{*}\circ\tr_*=\tr_*\circ \sigma_{*}= -\tr_*.$\\
	The second statement follows from $\sigma\lambda=\lambda\sigma$ ( see Lem. \ref{commutesigmalambda}).
\end{proof}
\begin{dfn}
	Restricting the second arrow in Equ. \ref{difftrace} to the first factor gives $$ \tr^*\mathbf{T}_{\Gamma (K_S)}\rightarrow \mathbf{T}_\N \xrightarrow{\sigma_*} \sigma^*\mathbf{T}_\N,$$
	where we label the first arrow as $\id_*: \tr^*\mathbf{T}_{\Gamma (K_S)}\rightarrow \mathbf{T}_\N$, given by
	$$\id_*(v) = v\cdot \id.$$
\end{dfn}

\subsection{Third Step}
We will use a single result of \cite{TT} at this point:
\begin{claim}
	The differential of $\tr: \N \rightarrow \Gamma(K_S)$
	\begin{center}
		\begin{tikzcd}
		\mathbf{T}_\N \arrow[r,"\tr_*"] &\tr^*\mathbf{T}_{\Gamma (K_S)},
		\end{tikzcd}
	\end{center}
 admits a right-inverse
		\begin{align*}
			\tr^*\mathbf{T}_{\Gamma (K_S)} \xrightarrow{\frac{1}{2}\cdot \id_*} \mathbf{T}_\N,
		\end{align*}
	which are compatible with the arrow $a: \mathbf{R}\mathcal{H}om_{p_X}(\EE,\EE)[1] \rightarrow \mathbf{R}p_{S,*}K_{S}$ and its inverse $b$ via Atiyah classes, i.e. 
	\begin{equation}\label{TT}
	\begin{tikzcd}
	\mathbf{R}\Homm_{p_X}(\EE,\EE)[1] \arrow[r,"a"]&  \mathbf{R}{p_{S,*}}K_S  \arrow[r,"b"] & \mathbf{R}\Homm_{p_X}(\EE,\EE)[1]\\
	\mathbf{T}_{\N} \arrow[r,"\tr_*"] \arrow[u,"\At_{\EE}"] & \tr^*\mathbf{T}_{\Gamma(K_S)} \arrow[u,"\At_{\Gamma(K_S)}"]\arrow[r," \id_*"]& \mathbf{T}_\N \arrow[u,"\At_{\EE}"]\\
	\end{tikzcd}
	\end{equation}
	commutes, following from \cite[Lem.5.28]{TT}. 
\end{claim}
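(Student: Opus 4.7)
The plan is to verify the three assertions in the claim — the right-inverse identity $\tr_\ast \circ \tfrac{1}{2}\id_\ast = \id$, the commutativity of the left square, and the commutativity of the right square — independently. The first is a direct rank-$2$ computation; the other two are naturality statements for the Atiyah class that, once the dictionary between $(a,b)$ and $(\tr\pi_\ast, \pi^\ast(\_\cdot \id))$ is set up via Grothendieck--Verdier duality, reduce to \cite[Lem.~5.28]{TT}.

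First I would dispatch the right-inverse property. At a closed point $[\E_\phi]\in \N$ a tangent vector $v \in H^0(K_S) \cong \mathbf{T}_{\Gamma(K_S), \tr\phi}$ is sent by $\tfrac{1}{2}\id_\ast$ to $\tfrac{v}{2}\cdot \id_E \in \Ext^1(\E_\phi,\E_\phi)$, whose trace is $\tfrac{1}{2}\tr(v\cdot\id_E) = \tfrac{1}{2}\rk(E)\,v = v$, invoking $\rk(E) = 2$. The global statement follows because both $\id_\ast$ and $\tr_\ast$ are defined uniformly from the universal Higgs field $\Phi$, so that $\tr_\ast\circ \tfrac{1}{2}\id_\ast$ is a morphism of locally free $\OO_\N$-modules which is pointwise the identity.

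Next I would attack the two commutative squares by invoking functoriality of the Atiyah class (Sec.\ \ref{fullAtiyah}). For the left square, recall that $a$ is Serre-dual to $\pi^\ast(\_\cdot \id)$ along $p_X$, so after the $[1]$-shift it is precisely $\tr\circ \pi_\ast$. Since the classifying map $\tr:\N \to \Gamma(K_S)$ is read off from the universal section $\tr(\Phi)$ of $K_S\otimes \OO_{S\times \N}$, naturality of the Atiyah class gives $\tr\pi_\ast\circ \At_\EE = \At_{\Gamma(K_S)}\circ \tr_\ast$. For the right square, $b$ is Serre-dual to $\tr(\pi_\ast\_)$, hence identified with $\pi^\ast(\_\cdot \id_\EE)$, while $\id_\ast:v\mapsto v\cdot \id_\E$ classifies the translation action of $\Gamma(K_S)$ on $\N$ sending $(\alpha,\phi)\mapsto \phi-\alpha\cdot\id$. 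Another application of naturality, combined with the Leibniz identity $\At_{v\cdot \id_\EE} = \At_v\cdot \id_\EE + v\cdot \At_{\id_\EE} = \At_v\cdot \id_\EE$ (since $\At_{\id_\EE}=0$), yields $\pi^\ast(\_\cdot \id_\EE)\circ \At_{\Gamma(K_S)} = \At_\EE\circ \id_\ast$.

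The main expected obstacle is the bookkeeping around Grothendieck--Verdier duality along $p_X$ versus $p_S$: both squares become transparent once the Serre-dual descriptions of $a,b$ are pinned down, but matching the degree shifts and identifying $b$ as the Serre-dual of the trace is the nontrivial part. This is exactly what \cite[Lem.~5.28]{TT} accomplishes; once the dictionary is in place, the rest of the claim is a formal consequence of naturality and the Leibniz rule for Atiyah classes.
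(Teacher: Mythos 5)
Your elementary verification of the right-inverse identity is fine and consistent with the paper: the splitting $\tr\pi_\ast\circ\tfrac{1}{2}\pi^\ast(\_\cdot\id)=\id$ is exactly the rank-$2$ trace computation already set up in the section on the trace-identity splitting, and $a\circ b = 2\cdot\id$ is its Serre dual. Note also that the paper itself offers no argument for this claim beyond citing \cite[Lem.5.28]{TT}, so your ultimate reliance on that lemma is in line with the source.

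However, your derivation of the two commutative squares contains a genuine mislocation of the difficulty. The ``naturality'' of the Atiyah class recorded in Sec.~\ref{fullAtiyah} concerns a morphism $\psi:\FF\to\FF'$ of objects on a \emph{fixed} space $Y$ and says that $\At_{\FF'}\circ\psi=(\psi\otimes 1)\circ\At_\FF$. The squares in the claim are of a different nature: they compare the partial Atiyah class of the universal spectral sheaf on $\N$ with the Atiyah class of the universal section over $\Gamma(K_S)$, across the differentials $\tr_\ast$ and $\id_\ast$ of classifying maps between \emph{distinct} moduli spaces. Such compatibility is not formal; for the determinant the analogous square is Diag.~\ref{TOEN} and requires \cite[Prop.3.2]{STV}, and for the trace it is precisely the content of \cite[Lem.5.28]{TT} --- the paper flags this explicitly (``we will use a single result of \cite{TT} at this point''). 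By contrast, the ``dictionary'' you defer to the cited lemma (identifying $a$ and $b$ as Serre duals of $\pi^\ast(\_\cdot\id)$ and $\tr\pi_\ast$, with the degree shift coming from the relative dimensions of $p_X$ and $p_S$) is already established in Sec.~\ref{splitting} of this paper and is not what the lemma supplies. Likewise the Leibniz-rule manipulation $\At_{v\cdot\id_\EE}=\At_v\cdot\id_\EE$ is heuristic: what must actually be shown is that the first-order deformation $\phi\mapsto\phi-\alpha\cdot\id$ of the Higgs field induces the class $b(\alpha)=\pi^\ast\alpha\cdot\id$ in $\Ext^1$ of the spectral sheaf under the truncated Atiyah class, which again is the substance of the cited lemma rather than a consequence of naturality. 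So your proof is not wrong in its conclusion, but the parts you present as formal would not go through without invoking the very lemma you relegate to bookkeeping.
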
	
\subsection{Atiyah classes} 
Composing Diag. \ref{TT} with Cor. \ref{firststep}  gives
\begin{equation}\label{traceAtiyah}
\begin{tikzcd}
\mathbf{R}\Homm_{p_X}(\EE,\EE)[1] \arrow[r,"a"]&  \mathbf{R}{p_{S,*}}K_S  \arrow[r,"b"] & \mathbf{R}\Homm_{p_X}(\sigma^{*}\EE,\sigma^*\EE)[1]\\
\mathbf{T}_{\N} \arrow[r,"\tr_*"] \arrow[u,"\At_{\EE}"] & \tr^*\mathbf{T}_{\Gamma(K_S)} \arrow[u,"\At_{\Gamma(K_S)}"]\arrow[r," \sigma_*\circ \id_*"]& \sigma^*\mathbf{T}_\N \arrow[u,"\sigma^*\At_{\EE}"]\\
\end{tikzcd}
\end{equation}
as $\sigma_{*}b=b$ by Claim \ref{trivialaction}. 
\begin{rmk}
	We remark that the lower horizontal composition is 
	$$v \mapsto \sigma_*(\tr_*(v)\cdot \id)=-\tr_*(v)\cdot \id. $$
\end{rmk}	
\subsection{The differential of $\sigma$ } 
\begin{dfn}
	We define 
	$$\theta_\sigma: \mathbf{R}\Homm_{p_X}(\EE,\EE)[1] \rightarrow \mathbf{R}\Homm_{p_X}(\sigma^*\EE,\sigma^*\EE)[1]$$  $$f \mapsto \sigma_*f-ba(f)$$
\end{dfn}
\begin{rmk}
	Corresponding to $$\mathbf{R}\mathcal{H}om_{p_X}(\EE,\EE)[1] \cong \mathbf{R}\mathcal{H}om_{p_X}(\EE,\EE)^0[1]  \oplus\mathbf{R}p_{S,*}K_{S},$$ we write $f=f^0\oplus \frac{1}{2}ba(f)$, so this can be rewritten as $$\theta_{\sigma}: f^0\oplus \frac{1}{2} ba(f) \mapsto \sigma_*f^0\oplus -\frac{1}{2}ba(f),$$
	i.e. it is $-1$ on second summand. 
\end{rmk}
\begin{lem}\label{diagonaltrace}
	We have $\sigma^*\theta_{\sigma} \circ \theta_{\sigma}=\id$, i.e. 
	\begin{center}
		\begin{tikzcd}
		\pot [1]\arrow[r,"\theta_{\sigma}"] \arrow[dr,equal]& \RHomm_{p_X}(\sigma^*\EE,\sigma^*\EE)[1] \arrow[d,"\sigma^*\theta_\sigma"]\\
		&\RHomm_{p_X}(\sigma^{*2}\EE,\sigma^{*2}\EE)[1]
		\end{tikzcd}
	\end{center}
	commutes
\end{lem}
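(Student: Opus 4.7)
The plan is to reduce the claim to the diagonal description of $\theta_\sigma$ recorded in the remark immediately above. Under the splitting $\mathbf{R}\Homm_{p_X}(\EE,\EE)[1] \cong \mathbf{R}\Homm_{p_X}(\EE,\EE)^0[1] \oplus \mathbf{R}p_{S,*}K_S$ (whose projector $p = \tfrac12 ba$ is idempotent thanks to $a \circ b = 2\cdot \id$), the map $\theta_\sigma$ is precisely $\sigma_* \oplus (-1)$. It therefore suffices to verify two things: that the natural linearisation $\sigma_*$ preserves this splitting, and that its square is the identity on the first summand; the identity $(-1)^2 = 1$ on $\mathbf{R}p_{S,*}K_S$ is automatic.

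For the stability of the splitting, I would invoke Claim \ref{trivialaction}, which gives $\sigma_* \circ b = b$, together with its Serre-dual $a \circ \sigma_* = a$ (which is the same computation applied to $\tr \circ \pi_*$, using that the natural $\sigma_*$ fixes the $\mathbf{R}p_{S,*}\OO_S$-summand). These identities show that $\sigma_*$ sends $\mathbf{R}\Homm_{p_X}(\EE,\EE)^0[1]$ into $\mathbf{R}\Homm_{p_X}(\sigma^*\EE,\sigma^*\EE)^0[1]$ and that $p$ is intertwined by $\sigma_*$; in particular $\sigma_*$ respects the direct-sum decomposition.

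For the second point, the natural linearisation $\sigma_*$ was built in Cor. \ref{adjoint} from the isomorphism $\Psi: \EE \cong \sigma^*\EE \otimes \LLL$, whose square was arranged in Lem. \ref{sigmaequivariance} to equal the identity. Consequently $\sigma^*\sigma_* \circ \sigma_* = \id$ on the whole of $\mathbf{R}\Homm_{p_X}(\EE,\EE)[1]$; restricting to the $0$-summand and combining with $(-1)^2 = 1$ on $\mathbf{R}p_{S,*}K_S$ produces $\sigma^*\theta_\sigma \circ \theta_\sigma = \id$.

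I do not foresee any essential obstacle, since all ingredients have been collected in the preceding subsections and the argument exactly mirrors the proof of Lem. \ref{diagonallambda} for the determinant twist. The only bookkeeping subtlety is keeping track of which copy of $b$, $a$ and $\sigma_*$ is in play — the ones attached to $\EE$ in the first application of $\theta_\sigma$ and their pullbacks by $\sigma$ in the second — but the compatibilities required are exactly those recorded in Claim \ref{trivialaction} and Diag. \ref{traceAtiyah}.
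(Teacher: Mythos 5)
Your proposal follows the same route as the paper: the paper's proof simply notes that $\sigma^2=\id$ makes the composite land back in $\RHomm_{p_X}(\EE,\EE)[1]$ and that $\theta_\sigma=\sigma_*\oplus(-1)$ under the trace splitting, hence squares to the identity. You merely make explicit the two facts the paper leaves implicit — that $\sigma_*$ respects the splitting (Claim \ref{trivialaction}) and that $\sigma_*$ itself squares to the identity by the construction of $\Psi$ in Lem. \ref{sigmaequivariance} and Cor. \ref{adjoint} — so the argument is correct and essentially identical.
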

\begin{proof}
	This is similar to the case of $\lambda$: As $\sigma^2=\id$, the vertical arrow maps indeed to  $\mathbf{R}\mathcal{H}om_{p_X}(\EE,\EE)[1]$.\\
	Now $\theta_{\sigma}=\sigma_*\oplus (-1)$ for the above splitting by the remark, thus $\theta_\sigma^2=\id$. 
\end{proof}

\subsection{The Atiyah class}
Again, we relate $\theta_{\sigma}$ to the differential action on $\mathbf{T}_\N$.
\begin{lem}\label{equivtraceat}
	$\theta_{\sigma}=\sigma_{*}-ba$ commutes with $ \tilde{\sigma}_*: \mathbf{T}_\N \rightarrow \sigma^*\mathbf{T}_\N$ constructed in \ref{deftr}, i.e. the following diagram commutes:
	\begin{equation} \label{finaltr}
	\begin{tikzcd}[column sep=12ex]
	\mathbf{R}\Homm_{p_X}(\EE,\EE)[1] \arrow[r,"\theta_{\sigma}"] & \mathbf{R}\Homm_{p_X}(\sigma^*\EE,\sigma^*\EE)[1]\\
	\mathbf{T}_\N \arrow[r,"\tilde{\sigma}_*"] \arrow[u,"\At_{\EE}"] &	\sigma^*\mathbf{T}_\N \arrow[u,"\sigma^*\At_{\EE}"] \\
	\end{tikzcd}
	\end{equation}
	\begin{proof}
		We recall the diagram discussed in \ref{firststep}: 
		\begin{equation}
		\begin{tikzcd}
		\mathbf{R}\Homm_{p_X}(\EE,\EE)[1] \arrow[r,"\sigma_*"] & \mathbf{R}\Homm_{p_X}(\sigma^*\EE,\sigma^*\EE)[1]\\
		\mathbf{T}_\N \arrow[r,"\sigma_*"] \arrow[u,"\At_{\EE}"] &	\sigma^*\mathbf{T}_\N \arrow[u,"\sigma^*\At_{\EE}"] \\
		\end{tikzcd}
		\end{equation} 
		The claim follows by subtracting the rows of \ref{traceAtiyah}, as we then get that  $$\tilde{\sigma}_*=\sigma_*+\tr_*\_\cdot \id = \sigma_{*} -\sigma_{*}\circ \tr_* \_ \cdot \id \; \; \text{maps to } \; \; \theta_{\sigma}=\sigma_*-ba$$ via $\At_{\EE}$ and its pullback by $\sigma^*$.  
	\end{proof}
\end{lem}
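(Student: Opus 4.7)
My plan is to reduce the claim to the two compatibility diagrams already established in this section, namely the naive $\sigma$-linearisation (Diagram \ref{firststep}) and the identification of $ba$ as the ``trace-correction'' via Atiyah classes (Diagram \ref{traceAtiyah}).

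The first step will be to expand both sides by their defining decompositions: $\tilde{\sigma}_* = \sigma_* + (\underline{\phantom{v}} \cdot \id)\circ \tr_*$ (from the Claim in the second step) and $\theta_\sigma = \sigma_* - ba$ (by definition). This splits the desired equality $\sigma^*\At_{\EE}\circ \tilde{\sigma}_* = \theta_\sigma \circ \At_{\EE}$ into two independent pieces: one involving the ``naive'' parts $\sigma_*$ on each side, the other involving the correction terms.

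The naive part is immediate: $\sigma^*\At_{\EE}\circ \sigma_* = \sigma_*\circ \At_{\EE}$ is exactly the content of Diagram \ref{firststep} from Corollary \ref{sigmalinear}. For the correction part, I would read off Diagram \ref{traceAtiyah}: the composition along its lower row sends $v \mapsto \sigma_*(\id_*(\tr_*(v))) = \sigma_*(\tr_*(v)\cdot \id)$, which by the sign identity $\sigma_*\circ \tr_* = -\tr_*$ of Lemma \ref{helplemma} reduces to $-\tr_*(v)\cdot \id$, while the upper row ends in $ba(\At_{\EE}(v))$. The diagram thus furnishes $\sigma^*\At_{\EE}(\tr_*(v)\cdot \id) = -ba(\At_{\EE}(v))$; combining this with the naive identity gives the lemma.

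The main subtlety I anticipate is sign bookkeeping: the correction in $\theta_\sigma$ carries a minus sign precisely because $\sigma_*$ negates the trace, so one must pair the $+\tr_*(v)\cdot \id$ correction in $\tilde{\sigma}_*$ with the $-ba$ correction in $\theta_\sigma$ with matching signs. A secondary point is that one may freely replace $b$ by $\sigma_* b$ in the rightmost column of Diagram \ref{traceAtiyah} without changing anything, thanks to Claim \ref{trivialaction} ($\sigma_* b = b$); this is what allows the target of $ba$ to be identified with $\mathbf{R}\Homm_{p_X}(\sigma^*\EE,\sigma^*\EE)[1]$ on the nose, so that the two pieces can be added. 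No new geometric input beyond \cite[Lem.5.28]{TT}, already invoked in assembling \ref{traceAtiyah}, should be required.
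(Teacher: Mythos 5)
Your proposal is correct and follows essentially the same route as the paper: split $\tilde{\sigma}_*$ and $\theta_\sigma$ into their naive parts (handled by Cor.~\ref{sigmalinear}) and their correction terms (handled by subtracting the rows of Diag.~\ref{traceAtiyah}, using $\sigma_*\circ\tr_*=-\tr_*$ and $\sigma_*b=b$). The sign bookkeeping you flag is exactly the point the paper's proof turns on, and you handle it the same way.
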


\begin{cor}
	This lifts $\At_{\EE}$ to $\mathbf{D}^b(\N)^{\langle \sigma \rangle}$. 
\end{cor}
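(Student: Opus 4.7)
The plan is to verify the conditions of Definition \ref{equivdef} with $\iota$ replaced by $\sigma$, using $\theta_\sigma$ on the obstruction side and $\tilde{\sigma}_*$ on the tangent side. By that definition, we must assemble three pieces of data: (i) a commutative square linking $\At_{\EE}$ and $\sigma^*\At_{\EE}$ via $\tilde{\sigma}_*$ and $\theta_\sigma$; (ii) a compatibility triangle for $\tilde{\sigma}_*$ on $\mathbf{T}_\N$; and (iii) a compatibility triangle for $\theta_\sigma$ on $\mathbf{R}\Homm_{p_X}(\EE,\EE)[1]$, such that (ii) and (iii) map to one another under $\At_{\EE}$ and its $\sigma$-pullback.

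First I would invoke Lemma \ref{equivtraceat}, which supplies (i) directly: the square
\[
\begin{tikzcd}[column sep=12ex]
\mathbf{R}\Homm_{p_X}(\EE,\EE)[1] \arrow[r,"\theta_{\sigma}"] & \mathbf{R}\Homm_{p_X}(\sigma^*\EE,\sigma^*\EE)[1]\\
\mathbf{T}_\N \arrow[r,"\tilde{\sigma}_*"] \arrow[u,"\At_{\EE}"] & \sigma^*\mathbf{T}_\N \arrow[u,"\sigma^*\At_{\EE}"]
\end{tikzcd}
\]
commutes. Then (ii) follows from Lemma \ref{helplemma}, which gives $\sigma^*\tilde{\sigma}_* \circ \tilde{\sigma}_* = \id$ on $\mathbf{T}_\N$ since $\sigma^2=\id$ and since the $\tr_*(v)\cdot\id$ correction cancels under iteration via $\sigma_*\tr_*=-\tr_*$. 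Likewise, (iii) is exactly Lemma \ref{diagonaltrace}: the decomposition $\theta_\sigma=\sigma_*\oplus(-1)$ with respect to the trace-identity splitting shows $\sigma^*\theta_\sigma \circ \theta_\sigma = \id$.

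To conclude, I would observe that the two triangles in (ii) and (iii) fit into a commutative diagram through $\At_{\EE}$: stacking the square of (i) on top of its $\sigma^*$-pullback gives
\[
\begin{tikzcd}[column sep=10ex]
\mathbf{R}\Homm_{p_X}(\EE,\EE)[1] \arrow[r,"\theta_{\sigma}"] & \mathbf{R}\Homm_{p_X}(\sigma^*\EE,\sigma^*\EE)[1] \arrow[r,"\sigma^*\theta_\sigma"] & \mathbf{R}\Homm_{p_X}(\sigma^{*,2}\EE,\sigma^{*,2}\EE)[1]\\
\mathbf{T}_\N \arrow[u,"\At_{\EE}"] \arrow[r,"\tilde{\sigma}_*"] & \sigma^*\mathbf{T}_\N \arrow[u,"\sigma^*\At_{\EE}"] \arrow[r,"\sigma^*\tilde{\sigma}_*"] & \sigma^{*,2}\mathbf{T}_\N \arrow[u,"\sigma^{*,2}\At_{\EE}"]
\end{tikzcd}
\]
and the outer columns agree with the identity-triangles of (ii), (iii) because both horizontal compositions are the identity. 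This exhibits $\At_{\EE}$ as a morphism in $\mathbf{D}^b(\N)^{\langle\sigma\rangle}$ between $(\mathbf{T}_\N,\tilde{\sigma}_*)$ and $(\mathbf{R}\Homm_{p_X}(\EE,\EE)[1],\theta_\sigma)$, proving the corollary.

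No genuine obstacle remains here, since the substance of the proof has been absorbed into the preceding lemmas; the one subtlety to flag is that the equivariant structure on $\mathbf{T}_\N$ used in (ii) is $\tilde{\sigma}_*$ (the \emph{corrected} differential of $\sigma$), not the naive lift $\sigma_*$ of Cor. \ref{sigmalinear}. This is precisely what allowed the trace-identity summand $\mathbf{R}p_{S,*}K_S$ to pick up the sign $-1$ in $\theta_\sigma$, matching the pointwise computation \eqref{observation}.
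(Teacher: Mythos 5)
Your proposal is correct and follows the same route as the paper: both verify the equivariance by combining Lemma \ref{equivtraceat} (the commutative square via $\At_{\EE}$), Lemma \ref{helplemma} ($\tilde{\sigma}_*^2=\id$), and Lemma \ref{diagonaltrace} ($\sigma^*\theta_\sigma\circ\theta_\sigma=\id$), so that the identity-triangle for $\tilde{\sigma}_*$ on $\mathbf{T}_\N$ maps to the one for $\theta_\sigma$ on $\mathbf{R}\Homm_{p_X}(\EE,\EE)[1]$. Your closing remark that the corrected differential $\tilde{\sigma}_*$ (rather than the naive $\sigma_*$) is the right equivariant structure on $\mathbf{T}_\N$ is exactly the point of the construction.
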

\begin{proof}
	As $\tilde{\sigma}_*$ again has square equal to $\id$, since
	\begin{center}
		\begin{tikzcd}
		\mathbf{T}_\N \arrow[r,"\tilde{\sigma}_{*}"] \arrow[dr,equal] & \sigma^*\mathbf{T}_\N \arrow[d,"\sigma^*\tilde{\sigma}_*"]\\
		& (\sigma^{*,2})\mathbf{T}_\N \\
		\end{tikzcd}
	\end{center}
	maps via $\At_\EE$ to 
	\begin{center}
		\begin{tikzcd}
		\pot [1]\arrow[r,"\theta_{\sigma}"] \arrow[dr,equal]& \RHomm_{p_X}(\sigma^*\EE,\sigma^*\EE)[1] \arrow[d,"\sigma^*\theta_\sigma"]\\
		&\RHomm_{p_X}(\sigma^{*2}\EE,\sigma^{*2}\EE)[1]
		\end{tikzcd}
	\end{center}
	with everything commutative thanks to Lem. \ref{diagonaltrace}, \ref{equivtraceat} and \ref{helplemma}.
	\begin{rmk}
		Restricting $\theta_\sigma$ to $\N^\perp$, we see that the translation by $\tr(\phi)$ of spectral sheaves $\sigma_{\tr\phi}: \E \mapsto \sigma_{\tr\phi}^*\E $ is trivial as $\tr(\phi)=0$ on $\N^\perp$. Thus $\theta_{\sigma}=1\oplus (-1)$  acts entirely on $ \mathbf{R}\Homm_{p_X}(\EE,\EE)[1]|_{\N^\perp}$ giving a splitting into $\pm 1$ eigensheaves $$\mathbf{R}\Homm_{p_X}(\EE,\EE)^0[1]|_{\N^\perp}\oplus \mathbf{R}p_{S,*}K_{S} |_{\N^\perp}.$$
	\end{rmk}
\end{proof}
\section{The equivariance}\label{iotaequiv}
\subsection{Summary}
We combine the results of the previous two sections.  \\
We've seen that there are linearisation maps $\theta_{\lambda},\theta_{\sigma}$ lifting the actions of $\lambda,\sigma$ on $\N$ to the virtual tangent complex $\mathbf{R}\mathcal{H}om_{p_X}(\EE,\EE)[1]$. This was done in such a way that $\theta_{\lambda},\theta_{\sigma}$ are compatible with the actions $\lambda_*: \mathbf{T}_\N \rightarrow \lambda^*\mathbf{T}_\N $ and $\tilde{\sigma}_*: \mathbf{T}_\N \rightarrow \sigma^*\mathbf{T}_\N$ via $\At_{\EE}$.\\
\subsection{The equivariance of $\iota$}
\begin{dfn}
	Recalling that $\iota=\lambda \circ \sigma$, we define $$\theta_{\iota}:= \theta_{\lambda}\circ \theta_{\sigma}.$$
\end{dfn}
The equivariance of $\iota$ is now a simple corollary: 
\begin{cor}
	We have a linearisation $$\theta_\iota: \mathbf{R}\mathcal{H}om_{p_X}(\EE,\EE)[1]\rightarrow \mathbf{R}\mathcal{H}om_{p_X}(\iota^*\EE,\iota^*\EE)[1]$$ for $\iota$ 
	that acts as $-1$ on $\mathbf{R}p_{S,*}K_{S}\oplus \mathbf{R}p_{S,*}\mathcal{O}_{S}[1]$
\end{cor}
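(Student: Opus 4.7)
My plan is to interpret the composition $\theta_\iota := \theta_\lambda \circ \theta_\sigma$ carefully with respect to the pullback structure, then paste the two preceding sections together. Since $\sigma$ and $\lambda$ commute on $\N$ (Lem. \ref{commutesigmalambda}) and $\iota = \lambda\sigma = \sigma\lambda$, the natural reading of $\theta_\lambda\circ\theta_\sigma$ is
\[
\mathbf{R}\Homm_{p_X}(\EE,\EE)[1] \xrightarrow{\theta_\sigma} \mathbf{R}\Homm_{p_X}(\sigma^*\EE,\sigma^*\EE)[1] \xrightarrow{\sigma^*\theta_\lambda} \mathbf{R}\Homm_{p_X}(\iota^*\EE,\iota^*\EE)[1],
\]
where in the second arrow I use $\sigma^*\mathbf{R}\Homm_{p_X}(\EE,\EE)\cong\mathbf{R}\Homm_{p_X}(\sigma^*\EE,\sigma^*\EE)$ and $\sigma^*\lambda^* = \iota^*$. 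This lands in the target required by the statement.

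To check that $\theta_\iota$ is a linearisation in the sense of Def. \ref{equi}, I would paste together the ``square equal to identity'' triangles of Lem. \ref{diagonallambda} and Lem. \ref{diagonaltrace}: applying $\iota^* = \sigma^*\lambda^* = \lambda^*\sigma^*$ to $\theta_\iota$ and composing with $\theta_\iota$ yields the identity once one uses the commutativity of $\lambda$ and $\sigma$ to identify the two orderings of the pullbacks. Compatibility of $\theta_\iota$ with the partial Atiyah class (Def. \ref{equivdef}) follows by stacking the equivariance squares of Lem. \ref{finallinebundle} and Lem. \ref{equivtraceat} and reading off the composed differential $\iota_* = \lambda_*\circ\tilde{\sigma}_* : \mathbf{T}_\N \to \iota^*\mathbf{T}_\N$ along the bottom row.

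The action on the last two summands is then a diagonal computation using the triple splitting of Def. \ref{globalsplitting}. By the determinant section together with Sec. \ref{naivelinebundle}, $\theta_\lambda$ is diagonal on $\mathbf{R}\Homm_{p_X}(\EE,\EE)^\perp\oplus\mathbf{R}p_{S,*}K_S[-1]\oplus\mathbf{R}p_{S,*}\OO_S$ with eigenvalues $(\lambda_*^\perp,+1,-1)$: it acts trivially on both the $K_S$ and $\OO_S$ pieces of the naive lift, and the correction term $-\pi^*(\tr(\At_{\pi_*\EE})\cdot\id)\otimes 1$ only modifies the $\OO_S$-factor by $-1$. Symmetrically, by the trace section and Claim \ref{trivialaction}, $\theta_\sigma$ is diagonal with eigenvalues $(\sigma_*^\perp,-1,+1)$. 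Thus $\theta_\iota$ is diagonal with eigenvalues $(\iota_*^\perp,-1,-1)$, which after shifting by $[1]$ is exactly the claimed $-1$ action on $\mathbf{R}p_{S,*}K_S\oplus\mathbf{R}p_{S,*}\OO_S[1]$.

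The main obstacle is not really deep: it is purely the bookkeeping of carrying pullbacks through the various $\mathbf{R}\Homm$ complexes and checking that the identifications $\sigma^*\mathbf{R}\Homm(\EE,\EE)\cong\mathbf{R}\Homm(\sigma^*\EE,\sigma^*\EE)$, together with the commutation of $\sigma$ and $\lambda$, are compatible with the trace-identity and trace-free splittings used to diagonalise $\theta_\lambda$ and $\theta_\sigma$. All substantive content has already been verified in the determinant and trace sections; this corollary really is just their composition.
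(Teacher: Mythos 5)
Your proposal is correct and follows essentially the same route as the paper: both arguments reduce to the triple splitting of Def.~\ref{globalsplitting}, the diagonal forms $\theta_\lambda=\lambda_*\oplus 1\oplus(-1)$ and $\theta_\sigma=\sigma_*\oplus(-1)\oplus 1$ established in the determinant and trace sections, and the commutativity $\lambda\sigma=\sigma\lambda$. The extra care you take with the pullback bookkeeping (writing the second arrow as $\sigma^*\theta_\lambda$ under the identification $\sigma^*\mathbf{R}\Homm(\EE,\EE)\cong\mathbf{R}\Homm(\sigma^*\EE,\sigma^*\EE)$) is a detail the paper leaves implicit, but it does not change the argument.
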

\begin{proof}
	Note that $\lambda\circ \sigma=\sigma \circ \lambda$.
	Again split $$\mathbf{R}\mathcal{H}om_{p_X}(\EE,\EE)[1]=\mathbf{R}\mathcal{H}om_{p_X}(\EE,\EE)[1]^\perp \oplus \mathbf{R}p_{S,*}K_{S}\oplus \mathbf{R}p_{S,*}\mathcal{O}_{S}[1],$$ then we have seen that we can write $\lambda_*,\sigma_{*}$ as $\lambda_*\oplus 1 \oplus 1$ and $\sigma_*\oplus 1 \oplus 1$.\\
	By the previous results of Lem. \ref{diagonallambda} and \ref{diagonaltrace}, we have $$\theta_{\lambda}=\lambda_*\oplus 1 \oplus (-1), \; \theta_{\sigma}=\sigma_*\oplus (-1) \oplus 1$$
	Thus, $$\theta_\iota=\theta_{\sigma} \circ \theta_{\lambda}=(\sigma_*\lambda_*)\oplus (-1) \oplus (-1)=\iota_*\oplus (-1) \oplus (-1)$$
	which also shows that $\iota^*\theta_{\iota}\circ \theta_{\iota}=\id$. 
\end{proof}	
\begin{dfn}
	Furthermore, we define $ \iota_*=(\tilde{\sigma}_*\circ \lambda_*): \mathbf{T}_\N \rightarrow \iota^*\mathbf{T}_\N$. 
\end{dfn}
\begin{rmk}
	By Lem. \ref{helplemma}, we find that $\iota_*^2=\id$ holds.
\end{rmk}	
\begin{thm}\label{iotaequivAt}
	$\At_{\EE}: \mathbf{T}_\N \rightarrow \mathbf{R}\mathcal{H}om_{p_X}(\EE,\EE)[1]$ is $\iota$-equivariant in the sense of Def. \ref{equivdef}.
\end{thm}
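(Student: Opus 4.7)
The plan is to bootstrap directly from the two equivariance statements already established for $\lambda$ and $\sigma$. Since we have defined $\iota = \lambda \circ \sigma = \sigma \circ \lambda$ (Lem. \ref{commutesigmalambda}), $\theta_\iota := \theta_\lambda \circ \theta_\sigma$, and $\iota_* := \tilde{\sigma}_* \circ \lambda_*$, verifying the conditions of Def. \ref{equivdef} for $\iota$ is essentially a matter of vertically pasting the two diagrams already proven for $\lambda$ and $\sigma$. No new Atiyah class computation is required.

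First, I would verify commutativity of the main square
\begin{equation*}
\begin{tikzcd}[column sep=12ex]
\mathbf{R}\Homm_{p_X}(\EE,\EE)[1] \arrow[r,"\theta_\iota"] & \mathbf{R}\Homm_{p_X}(\iota^*\EE,\iota^*\EE)[1]\\
\mathbf{T}_{\N}\arrow[u,"\At_{\EE}"] \arrow[r,"\iota_*"] & \iota^*\mathbf{T}_{\N} \arrow[u,"\iota^*\At_{\EE}"]
\end{tikzcd}
\end{equation*}
by factoring it as the composition of the $\lambda$-square (Lem. \ref{finallinebundle}) with the pullback $\lambda^*$ of the $\sigma$-square (Lem. \ref{equivtraceat}), using that $\sigma\lambda=\lambda\sigma$ so that the intermediate objects $\lambda^*\mathbf{T}_\N$ and $\mathbf{R}\Homm_{p_X}(\lambda^*\EE,\lambda^*\EE)[1]$ match up. Functoriality of Atiyah classes from Sec. \ref{fullAtiyah} guarantees that the vertical composition of pulled-back $\At_\EE$'s agrees with $\iota^*\At_\EE$ on the right column.

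Next, I would check the two triangles in Def. \ref{equivdef}. The triangle $\iota^*\iota_*\circ \iota_*=\id$ on $\mathbf{T}_\N$ follows formally from $\lambda_*\circ \lambda^*\lambda_*=\id$ (automatic, since $\lambda^2=\id$) and $\tilde{\sigma}_*\circ \sigma^*\tilde{\sigma}_*=\id$ (Lem. \ref{helplemma}), together with the commutativity $\tilde{\sigma}_*\circ \lambda_* = \lambda_*\circ \tilde{\sigma}_*$ also from Lem. \ref{helplemma}. The triangle $\iota^*\theta_\iota \circ \theta_\iota=\id$ is exactly the corollary stated immediately before the theorem, obtained from $\theta_\iota = \iota_*\oplus(-1)\oplus(-1)$ under the splitting of Def. \ref{globalsplitting}.

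Finally, the compatibility of these two triangles via $\At_{\EE}$ and its iterated pullbacks reduces to the corresponding compatibility already verified for $\lambda$ (Cor. in Sec. \ref{determinant}) and for $\sigma$ (Cor. in Sec. \ref{trace}), glued in the same order as above. The main (minor) obstacle is careful bookkeeping: one has to track pullbacks by $\lambda$ and $\sigma$ throughout, and in particular ensure that when composing $\theta_\lambda$ and $\theta_\sigma$, the correction terms $-\pi^*(\tr \pi_* f)\cdot \id$ and $-ba(f)$ interact trivially with the opposite factor — which is exactly the content of Claim \ref{trivialaction} and the computations in Sec. \ref{naivelinebundle}, showing each of $\sigma_*, \lambda_*$ acts as the identity on the other summand. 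With this observation the triangles and squares stack without further work, finishing the proof.
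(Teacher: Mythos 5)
Your proposal is correct and follows essentially the same route as the paper: the paper's proof likewise just composes the $\lambda$-square of Lem.~\ref{finallinebundle} with the $\sigma$-square of Lem.~\ref{equivtraceat} (using $\sigma\lambda=\lambda\sigma$) and invokes Lem.~\ref{diagonallambda}, Lem.~\ref{diagonaltrace} and Lem.~\ref{helplemma} for the two triangles. Your extra bookkeeping about pulling back the $\sigma$-square by $\lambda$ and about the correction terms acting trivially on the opposite summand is exactly the implicit content of the paper's terser argument.
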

\begin{proof}
	Indeed, composing the squares in \ref{finallinebundle} and \ref{finaltr} gives 
	\begin{equation} 
	\begin{tikzcd}[column sep=12ex]
	\mathbf{R}\Homm_{p_X}(\EE,\EE)[1] \arrow[r,"\theta_{\iota}"] & \mathbf{R}\Homm_{p_X}(\iota^*\EE,\iota^*\EE)[1]\\
	\mathbf{T}_\N \arrow[r,"\iota_*"] \arrow[u,"\At_{\EE}"] &	\iota^*\mathbf{T}_\N \arrow[u,"\iota^*\At_{\EE}"] \\
	\end{tikzcd}
	\end{equation}
	such that
	\begin{center}
		\begin{tikzcd}
		\mathbf{T}_\N \arrow[r,"\iota_*"] \arrow[dr,equal] & \iota^*\mathbf{T}_\N \arrow[d,"\iota^*(\iota_*)"]\\
		& (\iota^{*,2})\mathbf{T}_\N 
		\end{tikzcd}
	\end{center}
	maps to the triangle
	\begin{center}
		\begin{tikzcd}
		\RHomm_{p_X}(\EE,\EE)[1]\arrow[r,"\theta_{\iota}"] \arrow[dr,equal]& \RHomm_{p_X}(\iota^*\EE,\iota^*\EE)[1] \arrow[d,"\iota^*\theta_\iota"]\\
		&\RHomm_{p_X}(\iota^{*2}\EE,\iota^{*2}\EE)[1]
		\end{tikzcd}
	\end{center}
	via $\At_{\EE}$ with everything commutative.
\end{proof}
\begin{rmk}\label{movingpart}
	Restricting to $\N^\perp$  gives $\theta_{\iota}=1\oplus (-1)\oplus (-1)$, i.e. a map $$\theta_{\iota}: \RHomm_{p_X}(\EE,\EE)[1]|_{\N^\perp} \rightarrow \RHomm_{p_X}(\EE,\EE)[1]|_{\N^\perp}.$$

	Writing
	$$\RHomm_{p_X}(\EE,\EE)[1]|_{\N^\perp}\cong\RHomm_{p_X}(\EE,\EE)[1]^\perp|_{\N^\perp}\oplus N^{vir}$$
	the second summand $$N^{vir}:=(\mathbf{R}p_{S,*}\OO_S[1]\oplus \mathbf{R}p_{X,*}K_S)|_{\N^\perp}$$ is the virtual normal sheaf, i.e. the $(-1)$-eigensheaf for the action of $\theta_{\iota}$.\\
\end{rmk}
\begin{rmk}
	For the final chapter we phrase everything again in terms of their duals. Let $\mathbf{R}\mathcal{H}om_{p_X}(\EE,\EE)[2]$ denote the virtual $\iota$-equivariant cotangent bundle and $\mathbf{L}_\N$ the truncated cotangent complex of $\N$. 
\end{rmk}
\section{Application to the localisation formula}\label{finalsec}
In this chapter we find a perfect obstruction theory for $\N^\perp$. 
The proof is an adaptation of \cite[Prop.1]{GP} replacing the $\mathbf{C}^\times$-action by $\iota$. \\
For the $\mathbf{C}^\times$-action, the idea is to split the obstruction bundle $V$ over the fixed locus into weight zero and non-zero part, and then to remove the non-zero part, the virtual conormal bundle to the fixed locus.  \\
We do the same for $\iota$,  where we take away the $(-1)$-part of $\RHomm_{p_X}(\EE,\EE)[2]|_{\N^\perp}$, which are the deformations of trace and determinant, according to Rmk. \ref{movingpart} above.\\
We recall the definition of a perfect obstruction theory again. It consists of  
\begin{enumerate}
	\item A two-term complex of vector bundles $V^\bullet=[V^{-1}\rightarrow V^0] \in \mathbf{D}^{[-1,0]}(\N)$.
	\item A morphism $\psi: V^\bullet \rightarrow \mathbf{L}_{\N}$ in $\mathbf{D}^b(\N)$ to the truncated cotangent complex $\mathbf{L}_\N$ inducing an isomorphism on $h^0$ and a surjection on $h^{-1}$.
\end{enumerate}
In order to take $\iota$-invariants and apply \cite{GP}, we need for $\psi$ a representation by complexes and equivariant structure:
\subsection{Equivariant representation}
We sum up the results of the previous sections: In Prop. \ref{partialAtiyah} we've found a $2$-term representation of vector bundles for $$\At_{\EE}: \tau^{[-1,0]}\mathbf{R}\mathcal{H}om_{p_X}(\EE,\EE)[2]\rightarrow  \mathbf{L}_\N$$
making it into a perfect obstruction theory on $\N$, represented by
\begin{equation}\label{finalpot}
[V^{-1} \rightarrow V^0] \xrightarrow{\psi}  [\mathcal{I}/\mathcal{I}^2\rightarrow \Omega_\mathcal{A}|_\N] \in \mathbf{D}^{[-1,0]}(\N).
\end{equation}
We have studied the involution $\iota$ on $\N$ and identified one component of $\N^\iota$ with the $\mathbf{SU}(2)$-locus $$\N^\perp=\{(E,\phi)| \det(E)\cong\OO_S \; \text{and }\;\tr(\phi)=0 \} \subset \N$$
We constructed a lift $\theta_{\iota}$ of $\iota$ to $\mathbf{R}\mathcal{H}om_{p_X}(\EE,\EE)[1]$, compatible with the differential map $\iota_*: \iota^* \mathbf{L}_\N \rightarrow \mathbf{L}_\N$, lifting $\At_{\EE}$ to $\mathbf{D}^b(\N)^{\langle \iota \rangle}$.\\
Finally, we've seen in the last chapter that the restriction $\theta_{\iota}$ to $\N^\perp$ is $1\oplus (-1)$ for $$\mathbf{R}\mathcal{H}om_{p_X}(\EE,\EE)[2]|_{\N^\perp} =\RHomm_{p_X}(\EE,\EE)[2]^\perp|_{\N^\perp} \oplus N^{vir,\vee}$$ 
\begin{rmk}
	The above presentation $\psi$ of the obstruction theory can be chosen to be $\iota$-equivariant: Indeed, we've seen this for $\mathbf{L}_\N$ by finding an $\iota$-equivariant smooth embedding $\N \subset \mathcal{A}$.\\
	Concerning the obstruction theory  $V^\bullet$ in Equ. \ref{finalpot}, going back to the proof pf Prop. \ref{partialAtiyah}, we may choose a very negative $\iota$-\textit{equivariant} resolution $F^\bullet\rightarrow \mathbf{R}\mathcal{H}om(\EE,\EE)$ which gives commutativity of 
	\begin{equation*} 
	\begin{tikzcd}[column sep=12ex]
	\mathbf {R}\Homm_{p_X}(\iota^*\EE,\iota^*\EE)\arrow[r,"\iota^*\theta_{\iota}"] & \mathbf{R}\Homm_{p_X}(\EE,\EE)\\
	p_{\overline{X},*}(\iota^*F^\bullet)  \arrow[r,dashed] \arrow[u, ] & p_{\overline{X},*}F^\bullet \arrow[u,], \\
	\end{tikzcd}
	\end{equation*}
	after replacing the polarisation $\OO(1)$ with a $\iota$-linearised one \footnote{Fixing such a line bundle  $\OO(1)$ as in \ref{equivemb} on $X\times \N$, we  can resolve equivariantly as $$ \dots \rightarrow F^0= H^0(\EE^\vee\otimes \EE(l))\otimes \OO(-l)\twoheadrightarrow\EE^\vee\otimes \E.$$}.\\
	Then following the proof of Prop. \ref{partialAtiyah}, we end up with a genuine map of complexes $$\iota^*[V^{-1}\rightarrow V^0] \rightarrow [V^{-1}\rightarrow V^0] \in \mathbf{D}^{[-1,0]}(\N)$$ representing the lift of $\iota$ to the obstruction complex $$\iota^* \theta_{\iota}:\tau^{[-1,0]}\mathbf{R}\mathcal{H}om_{p_X}(\iota^*\EE,\iota^*\EE)[2] \rightarrow\tau^{[-1,0]}\mathbf{R}\mathcal{H}om_{p_X}(\EE,\EE)[2]. $$
	By the previous Sec. \ref{iotaequiv}, this maps to 
	$$\iota^*[\mathcal{I}/\mathcal{I}^2\rightarrow \Omega_\mathcal{A}|_\N] \rightarrow [\mathcal{I}/\mathcal{I}^2\rightarrow \Omega_\mathcal{A}|_\N] \in \mathbf{D}^{[-1,0]}(\N)$$
	representing
	$$\iota^*\mathbf{L}_\N \rightarrow \mathbf{L}_\N$$
	equivariantly, via the $\iota$-linearised truncated relative Atiyah class $\At_{\EE}$ of Thm. \ref{iotaequivAt}. 
\end{rmk}
\begin{cor}\label{everythingequiv}
	To sum up, we remark that this together with the $\iota$-equivariance shown in Sec. \ref{iotaequiv} lifts $$[V^{-1} \rightarrow V^0] \xrightarrow{\psi}  [\mathcal{I}/\mathcal{I}^2\rightarrow \Omega_\mathcal{A}|_\N]$$ to $\mathbf{D}^{[-1,0]}(\N)^{\langle \iota \rangle}$, i.e. is a morphism of $\iota$-linearised complexes. 
\end{cor}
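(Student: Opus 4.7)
The plan is to assemble three ingredients already in hand: an $\iota$-equivariant structure on the source $V^\bullet$, an $\iota$-equivariant structure on the target $[\mathcal{I}/\mathcal{I}^2 \to \Omega_\mathcal{A}|_\N]$, and the equivariance of the map between them provided by the Atiyah class.

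First, I would use Lem. \ref{equivemb} to pick a $\iota$-equivariant smooth embedding $\N \subset \mathcal{A}$; by Rmk. \ref{equivillusie} this lifts $[\mathcal{I}/\mathcal{I}^2 \to \Omega_\mathcal{A}|_\N]$ canonically to $\mathbf{D}^{[-1,0]}(\N)^{\langle \iota \rangle}$. In parallel, I would replace the polarisation $\OO(1)$ on $\overline{X} \times \N$ by $\OO(1) \otimes \iota^*\OO(1)$, so that it becomes $\iota$-linearised via the swap of factors, again via the averaging trick of Lem. \ref{equivemb}.

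Second, I would revisit the construction of the resolution $F^\bullet$ used in the proof of Prop. \ref{partialAtiyah}. Because $\mathbf{R}\Homm(\EE,\EE)$ carries the $\iota$-linearisation $\theta_\iota$ assembled in Sec. \ref{iotaequiv} (out of $\theta_\lambda$ and $\theta_\sigma$ from Sec. \ref{determinant}--\ref{trace}), and because $\OO(1)$ is now $\iota$-linearised, the standard locally free resolution
\[
\cdots \to F^0 = H^0(\EE^\vee \otimes \EE(l)) \otimes \OO(-l) \twoheadrightarrow \EE^\vee \otimes \EE
\]
inherits a strict $\iota$-action: both the vector space $H^0(\EE^\vee \otimes \EE(l))$ and $\OO(-l)$ carry compatible linearisations, and the evaluation map is equivariant by functoriality. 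Pushing down along $p_{\overline{X}}$, applying Grothendieck--Verdier duality, and truncating to $\tau^{[-1,0]}$ then produces $V^\bullet = [V^{-1} \to V^0]$ together with a strict morphism of complexes $\iota^* V^\bullet \to V^\bullet$, lifting $V^\bullet$ to $\mathbf{D}^{[-1,0]}(\N)^{\langle \iota \rangle}$.

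Third, I would invoke Thm. \ref{iotaequivAt}, which asserts that $\At_{\EE,\N}$ commutes with the equivariant structures on source and target. Since truncation and passage to the $2$-term vector bundle representation both preserve equivariance, this produces the required lift of $\psi$ to $\mathbf{D}^{[-1,0]}(\N)^{\langle \iota \rangle}$. The main obstacle is arranging \emph{strict} (not merely up-to-homotopy) equivariance of the constructed map of complexes $\psi$: this forces one to choose $F^\bullet$ and the ambient embedding $\N \subset \mathcal{A}$ coherently with the $\iota$-linearisation on $\OO(1)$ so that every square in the chain (full Atiyah class $\to$ partial Atiyah class $\to$ truncation) commutes on the nose, which is exactly what the averaging trick $\mathscr{L} \mapsto \mathscr{L} \otimes \iota^*\mathscr{L}$ secures.
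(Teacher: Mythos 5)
Your proposal is correct and follows essentially the same route as the paper: an $\iota$-equivariant embedding $\N\subset\mathcal{A}$ (via the averaged line bundle $\mathscr{L}\otimes\iota^*\mathscr{L}$) to linearise $[\mathcal{I}/\mathcal{I}^2\rightarrow\Omega_{\mathcal{A}}|_\N]$, an equivariant locally free resolution $F^\bullet$ built from the $\iota$-linearised polarisation exactly as in the paper's footnote, and Thm.~\ref{iotaequivAt} to make the truncated partial Atiyah class a morphism of $\iota$-linearised complexes. No gaps to report.
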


\subsection{Application to the $\iota$-linearisation}
Let $V^\bullet \in \mathbf{D}^b(\N)^{\langle \iota \rangle }$ on $\N$ as described above in \ref{everythingequiv}. By definition,the linearisation $\theta_\iota$ acting genuinely on $ V^\bullet|_{\N^\perp}$ makes $V^\bullet|_{\N^\perp}$ into a $2$-term complex of $\OO_{\N^\perp}[\langle \iota \rangle]$-modules \footnote{We remark that over  $\N^\perp$, $\iota$ acts via $\theta_{\iota}|_{\N^\perp}$.}. Thus by Lem. \ref{Glinear}, taking fixed parts  $$V^\bullet|_{\N^\perp} \mapsto V^\bullet|_{\N^\perp}^\iota$$ is exact, so $V^\bullet|_{\N^\perp}^\iota$ is a $2$-term  complex of vector bundles on $\N^\perp$.\\ To end this section, let us remark that the sheaves $h^{-1}(V^\bullet|_{\N^\perp}^\iota)$, $h^0(V^\bullet|_{\N^\perp}^\iota)$ are independent of the choice of $V^\bullet \sim W^\bullet \in \mathbf{D}^b(\N)^{\langle \iota \rangle} $. 
\subsection{The localisation formula}
We will now adapt the construction of \cite{GP} mentioned at the beginning, which defines obstruction theories of $\mathbf{C}^\times$-fixed loci. It is a general fact that taking fixed part of a $\mathbf{C}^\times$-equivariant map is exact. Based on what we have said in Lem. \ref{Glinear}, this also applies to $\langle \iota \rangle$. We conclude:
\begin{cor}\label{lastlemma}
	The 2-term complex of locally frees
	$$V^\bullet|_{\N^\perp} ^{ \iota}$$ represents the $\theta_{\iota}$-fixed part $$ \tau^{[-1,0]}\RHomm_{p_X}(\EE,\EE)^\perp[2]|_{\N^\perp}$$
	computing the cohomology sheaves $\Extt_{p_X}^i(\EE|_{\N^\perp},\EE|_{\N^\perp})^\perp$ for $i=1,2$.  
\end{cor}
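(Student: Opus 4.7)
The plan is to combine the $\iota$-equivariant representation of the obstruction theory from Cor. \ref{everythingequiv} with the exactness of the fixed-part functor from Lem. \ref{Glinear}. Recall $V^\bullet=[V^{-1}\to V^0]$ is a two-term complex of vector bundles on $\N$ carrying an $\iota$-linearisation, such that the map $\psi\colon V^\bullet\to [\mathcal{I}/\mathcal{I}^2\to \Omega_\mathcal{A}|_\N]$ represents the truncated partial Atiyah class equivariantly in $\mathbf{D}^{[-1,0]}(\N)^{\langle \iota \rangle}$. By Prop. \ref{fixed locus} and Cor. \ref{component}, every point of $\N^\perp$ is $\iota$-fixed, so the restriction of $\theta_\iota$ to $\N^\perp$ is an honest involution and endows $V^\bullet|_{\N^\perp}$ with the structure of a complex of $\OO_{\N^\perp}[\langle \iota \rangle]$-modules.

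First I would apply Lem. \ref{Glinear}: the fixed-part functor $(-)^\iota$ is exact on $\OO_{\N^\perp}[\langle \iota \rangle]$-modules, and each $V^i|_{\N^\perp}$ splits as $V^{i,+}\oplus V^{i,-}$ into $\pm 1$ eigensheaves. The summand $V^{i,+}$ is locally free as a direct summand of the locally free sheaf $V^i|_{\N^\perp}$, so $V^\bullet|_{\N^\perp}^\iota=[V^{-1,+}\to V^{0,+}]$ is indeed a two-term complex of vector bundles on $\N^\perp$.

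To identify what $V^\bullet|_{\N^\perp}^\iota$ represents, I would use that the $\iota$-equivariant quasi-isomorphism $V^\bullet\simeq \tau^{[-1,0]}\mathbf{R}\Homm_{p_X}(\EE,\EE)[2]$ restricts to one over $\N^\perp$. Exactness of $(-)^\iota$ preserves quasi-isomorphisms, so $V^\bullet|_{\N^\perp}^\iota$ computes the $+1$-eigensheaf of $\tau^{[-1,0]}\mathbf{R}\Homm_{p_X}(\EE,\EE)[2]|_{\N^\perp}$ under $\theta_\iota$. Dualising the decomposition of Rmk. \ref{movingpart},
\[
\mathbf{R}\Homm_{p_X}(\EE,\EE)[2]|_{\N^\perp}\;\cong\;\mathbf{R}\Homm_{p_X}(\EE,\EE)^\perp[2]|_{\N^\perp}\oplus N^{vir,\vee},
\]
with $\theta_\iota$ acting as $+1$ on the perp summand and $-1$ on $N^{vir,\vee}$; since canonical truncation commutes with direct sums, the $+1$-eigensheaf of the $\tau^{[-1,0]}$-truncation is exactly $\tau^{[-1,0]}\mathbf{R}\Homm_{p_X}(\EE,\EE)^\perp[2]|_{\N^\perp}$.

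Finally, cohomology commutes with the exact functor $(-)^\iota$, so $h^{-1}(V^\bullet|_{\N^\perp}^\iota)$ and $h^{0}(V^\bullet|_{\N^\perp}^\iota)$ are respectively the $+1$-eigensheaves of $\Extt^1_{p_X}(\EE,\EE)|_{\N^\perp}$ and $\Extt^2_{p_X}(\EE,\EE)|_{\N^\perp}$, which by the decomposition above coincide with $\Extt^i_{p_X}(\EE|_{\N^\perp},\EE|_{\N^\perp})^\perp$ for $i=1,2$. The main subtlety I anticipate is not in this linear algebra but in justifying that the $\iota$-equivariance survives through Grothendieck-Verdier duality and the canonical truncation used in Prop. \ref{partialAtiyah}; this is precisely why the very-negative $\iota$-linearised resolution $F^\bullet$ was chosen in Cor. \ref{everythingequiv}.
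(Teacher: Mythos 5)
Your proposal is correct and follows essentially the same route as the paper: the paper likewise derives this corollary directly from the $\iota$-equivariant representation of Cor.~\ref{everythingequiv}, the exactness of the fixed-part functor in Lem.~\ref{Glinear} (which makes $V^\bullet|_{\N^\perp}$ a complex of $\OO_{\N^\perp}[\langle\iota\rangle]$-modules whose invariants form a two-term complex of locally frees), and the eigensheaf decomposition of Rmk.~\ref{movingpart} identifying the $+1$-part with the perp summand. Your write-up merely spells out details (the splitting into $\pm 1$ eigensheaves, compatibility of truncation with direct sums, the degree bookkeeping for $\Extt^1,\Extt^2$) that the paper leaves implicit.
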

\begin{thm}
	There is a map
	$V^\bullet|_{\N^\perp} ^{ \iota}  \rightarrow \mathbf{L}_{\N^\perp}$
	defining a perfect obstruction theory on $\N^\perp$. 
\end{thm}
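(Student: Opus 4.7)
The plan is to adapt the Graber-Pandharipande argument for $\mathbf{C}^\times$-fixed loci to the involution $\iota$, exploiting that the fixed-part functor for the finite group $\langle \iota \rangle$ is exact by Lem.~\ref{Glinear}. Starting from the $\iota$-equivariant perfect obstruction theory $\psi: V^\bullet \to \mathbf{L}_\N$ of Cor.~\ref{everythingequiv}, I would restrict to $\N^\perp$, take $\iota$-fixed parts (which now act genuinely on the complexes, since $\N^\perp$ is fixed pointwise), and finally compose with a natural morphism from the invariant part of $\mathbf{L}_\N|_{\N^\perp}$ to $\mathbf{L}_{\N^\perp}$.

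Concretely, I first restrict $\psi$ to a morphism of $\iota$-linearised two-term complexes over $\N^\perp$, then apply the invariants functor to obtain
\begin{equation*}
\psi^\iota: V^\bullet|_{\N^\perp}^\iota \longrightarrow \mathbf{L}_\N|_{\N^\perp}^\iota,
\end{equation*}
whose domain is again a two-term complex of vector bundles by Cor.~\ref{lastlemma}. Using the $\iota$-equivariant smooth embedding $\N \subset \mathcal{A}$ of Lem.~\ref{equivemb}, the fixed locus $\mathcal{A}^\iota$ is itself smooth and contains $\N^\perp$ as a closed subscheme with ideal $\mathcal{I}' \subset \OO_{\mathcal{A}^\iota}$. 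I would assemble the natural comparison $\mathbf{L}_\N|_{\N^\perp}^\iota \to \mathbf{L}_{\N^\perp}$ from the identification $\Omega_\mathcal{A}^\iota|_{\N^\perp} \cong \Omega_{\mathcal{A}^\iota}|_{\N^\perp}$ in degree $0$ together with the surjection $(\mathcal{I}/\mathcal{I}^2)^\iota|_{\N^\perp} \twoheadrightarrow \mathcal{I}'/\mathcal{I}'^2$ in degree $-1$. Composing then produces the required morphism to $\mathbf{L}_{\N^\perp}$.

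For the two perfect obstruction theory conditions, exactness of the $\iota$-invariants functor commutes with taking cohomology, so $h^0(V^\bullet|_{\N^\perp}^\iota) \cong h^0(V^\bullet|_{\N^\perp})^\iota \cong \Omega_\N^\iota|_{\N^\perp}$; the comparison above identifies this with $\Omega_{\N^\perp}$, giving the required isomorphism on $h^0$. Surjectivity on $h^{-1}$ follows similarly from surjectivity of $h^{-1}(\psi)$ combined with right-exactness of invariants and the surjection onto $\mathcal{I}'/\mathcal{I}'^2$. The main obstacle I anticipate is precisely this comparison $\mathbf{L}_\N|_{\N^\perp}^\iota \to \mathbf{L}_{\N^\perp}$: since $\N$ is singular one cannot simply invoke the classical picture of fixed loci of group actions on smooth schemes, and one must verify by hand that the invariant part of $\mathcal{I}/\mathcal{I}^2$ maps correctly onto the conormal sheaf of $\N^\perp$ inside $\mathcal{A}^\iota$, using the $\iota$-equivariant structure on $\psi$ recorded in Cor.~\ref{everythingequiv}. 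Once this is in place, the remaining steps are formal consequences of exactness of invariants and of the equivariant machinery built in Sec.~\ref{iotaequiv}.
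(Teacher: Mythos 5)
Your proposal follows essentially the same route as the paper: restrict the $\iota$-equivariant theory $\psi$ to $\N^\perp$, take invariants using exactness of the fixed-part functor (Lem.~\ref{Glinear}), and compose with the comparison $\mathbf{L}_\N|_{\N^\perp}^\iota \to \mathbf{L}_{\N^\perp}$ built from the equivariant embedding $\N\subset\mathcal{A}$, with the identification $\Omega_{\mathcal{A}}|_{\N^\perp}^\iota\cong\Omega_{\mathcal{A}^\iota}|_{\N^\perp}$ in degree $0$ and the surjection of conormal sheaves in degree $-1$. You also correctly isolate the one nontrivial point — verifying that the invariant part of $\mathcal{I}/\mathcal{I}^2$ surjects onto $\mathcal{I}'/\mathcal{I}'^2$ and that this forces $\Omega_\N|_{\N^\perp}^\iota\cong\Omega_{\N^\perp}$ — which is exactly the diagram chase the paper carries out.
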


\begin{proof}
	The desired map will be constructed along the way, as well as the explicit representation of $\mathbf{L}_{\N^\perp}$. 
	We start with choosing a $\iota$-equivariant embedding $\N \subset \mathcal{A}$ as in Lem. \ref{equivemb}, making $$\mathbf{L}_\N=[\mathcal{I}/\mathcal{I}^2\rightarrow \Omega_\mathcal{A}|_\N]$$ equivariant.\\
	As $\iota$ lifts to $\mathcal{A}$, let $\mathcal{A}^\iota=\cup_i \mathcal{A}_i$ be the decomposition into irreducible components of the fixed locus and let $\N_i:=\mathcal{A}_i\cap \N$, such that $\N_i \subset \mathcal{A}_i$ is defined by the ideal sheaf $\mathcal{I}_{\N_i}$.\\
	We have seen in Rmk. \ref{component} that $\N^\perp \subset \N^\iota$ is a component of the fixed locus, as it is open and closed therein. Thus, we may choose the corresponding $i$ such that $\N_i=\N^\perp$. We get the representation of the cotangent complex $$\mathbf{L}_{\N^\perp}=[\mathcal{I}_{\N^\perp}/ \mathcal{I}_{\N^\perp}^2 \rightarrow \Omega_{\mathcal{A}_i}|_{\N^\perp}]$$
	As $\Omega_{\mathcal{A}}$ is locally free, there is a natural isomorphism $\Omega_{\mathcal{A}}|^\iota_{\mathcal{A}_i}\cong \Omega_{\mathcal{A}_i}$. Indeed, over $\N^\perp$ the differential $\iota_*$ acts as an involution $\iota_*$ on $ \Omega_{\mathcal{A}}|_{\mathcal{A}_i}$ with fixed part $ \Omega_{\mathcal{A}_i}$. \\
	This gives $\Omega_{\mathcal{A}}|^\iota_{\N^\perp}\cong \Omega_{\mathcal{A}_i}|_{\N^\perp}$ over $\N^\perp$. We have the following square
	\begin{equation*}
	\begin{tikzcd}
	\Omega_{\mathcal{A}_i}|_{\N^\perp} \arrow[r, ""] & \Omega_{\N^\perp}\\
	\Omega_{\mathcal{A}}|^{\iota}_{\N^\perp} \arrow[u, "\sim"] \arrow[r,""] & \Omega_{\N}|^\iota_{\N^\perp} \arrow[u,]\\	
	\end{tikzcd}
	\end{equation*}
	where the horizontal arrows are induced by the natural projections. Tus, we observe that the RHS arrow is onto.\\
	Let $V^\bullet |_{\N^\perp}$ be the restriction of the obstruction complex and $\psi_\perp: V^\bullet |_{\N^\perp}\rightarrow \mathbf{L}_{\N}|_{\N^\perp}$ the pulled back map. As $\psi$ is $\iota$-equivariant, we can take its fixed part $$\psi_\perp^\iota: V^{\bullet}|^\iota_{\N^\perp} \rightarrow {\mathbf{L}_{\N}}|_{\N^\perp}^\iota.$$ Furthermore, denote by $\delta: \mathbf{L}_\N|_{\N^\perp} \rightarrow \mathbf{L}_{\N^\perp}$ the (naturally $\iota$-equivariant) canonical map. Then we claim that the composition  $$V^\bullet |^\iota _{\N^\perp} \xrightarrow{\psi_\perp^\iota} {\mathbf{L}_{\N}}|_{\N^\perp}^\iota \xrightarrow{\delta^\iota} \mathbf{L}_{\N^\perp}$$ defines a perfect obstruction theory on $\N^\perp$:\\
	We remark again that the LHS is given by the two-term complex of vector bundles $V^\bullet|_{\N^\perp} ^{ \iota}$ by the last Lem. \ref{lastlemma}, showing $(1)$.\\
	We need to check the upper conditions on cohomology in $(2)$, which we check for the two maps separately: This is obvious for the restricted map $\psi_\perp$ and thus for $\psi_\perp^\iota$, as taking $\iota$-invariants is exact by Lem. \ref{Glinear}.\\
	The morphism $\mathbf{L}_\N|_{\N^\perp}^{\iota} \xrightarrow{\delta_\perp^\iota} \mathbf{L}_{\N^\perp}$ can be represented by the following diagram with exact rows. 
	\begin{equation*}
	\begin{tikzcd}
	0 \arrow[r] & \text{ker}(a) \arrow[r] \arrow[d,dashed]& \mathcal{I}_\N/ \mathcal{I}_\N^2 |_{\N^\perp}^\iota \arrow[d,"d^{-1}",two heads]\arrow[r,"a"]& \Omega_{\mathcal{A}}|_{\N^\perp}^\iota  \arrow[d,"\sim"] \arrow[r] &\Omega_{\N}|_{\N^\perp}^\iota\ \arrow[d, two heads] \arrow[r]& 0\\
	0 \arrow[r]& \text{ker}(b) \arrow[r]&
	\mathcal{I}_{\N^\perp}/ \mathcal{I}_{\N^\perp}^2 \arrow[r,"b"] & \Omega_{\mathcal{A}_i}|_{\N^\perp} \arrow[r] &\Omega_{\N^\perp}\arrow[r]&0
	\end{tikzcd}
	\end{equation*}
	By what we have already discussed, the right most vertical arrow is onto. As the rows are exact and $d^{-1}$ is onto, we actually get $\Omega_{\N}|_{\N^\perp}^\iota \cong \Omega_{\N^\perp}$. This gives the required property on $h^0$.
	Again as $d^{-1}$ is surjective, so is the induced map $\text{ker}(a) \rightarrow \text{ker}(b)$, i.e. there is an epimorphism on ${h}^{-1}$.  
	This finishes the proof of the theorem.
\end{proof}
\begin{rmk}
	As explained in Sec. \ref{pots}, this endows $\N^\perp$ with a virtual cycle of dimension $0$, see \cite[Def.5.2]{BF}. 
\end{rmk}


\begin{thebibliography}{[K-K-M]}
	
	\bibitem[B]{B} K. Behrend, {\em Donaldson-Thomas invariants via microlocal geometry}, Ann. of Math. {\bf 170} (2009), 1307-1338
	
	\bibitem[BO]{BO} T. Beckmann and G. Oberdieck, {\em On equivariant derived categories}, \href{https://arxiv.org/pdf/2006.13626}{arXiv: 2006.13626}, 3-10
	
	\bibitem[BF]{BF} K. Behrend and B. Fantechi, {\em The intrinsic normal cone}, Invent. Math. {\bf 128} (1997), 45-88 \href{https://arxiv.org/abs/alg-geom/9601010}{arXiv: 9601010}
	
	\bibitem[C]{C} A. Caldararu, {\em Derived categories of twisted sheaves on Calabi-Yau manifolds}, Cornell University (2000)
	
	\bibitem[FG]{FG} B. Fantechi and L. Göttsche, {\em Riemann Roch theorems and elliptic genus for virtually smooth schemes}, Geom. Topol. {\bf 14} (2010), 83-115 \href{https://arxiv.org/abs/0706.0988v1}{arXiv: 0706.0988 }
	
	\bibitem[F]{F} B. Fulton, {\em Intersection Theory}, Springer-Verlag (1980)
	
	\bibitem[FH]{FH} B. Fulton and J. Harris, {\em Representation theory: a first course},  Springer-Verlag (1991), 111-112
	
	\bibitem[GSY]{GSY} A. Gholampour, A. Sheshmani and S.Yau, {\em Localized Donaldson-Thomas theory of surfaces}, J. Amer. Math. \textbf{142} (2020), 405-442
	
	\bibitem[GP]{GP} T. Graber and R. Pandharipande,{ \em Localization of virtual classes}, Invent. Math. \textbf{135} (1999), 1-13 \href{https://arxiv.org/abs/alg-geom/9708001}{arXiv: 9708001}
	
	\bibitem[Hi]{Hi} N. Hitchin, {\em Stable bundles and integrable systems}, Duke Math. J.  {\bf 54} (1987), 91-113 
	
	\bibitem[H]{H} D. Huybrechts, {\em Fourier-Mukai transforms in algebraic geometry}, Oxford University Press (2006), 86-90
	
	\bibitem[HL]{HL} D. Huybrechts and M. Lehn, {\em The Geometry of moduli spaces of sheaves}, Cambridge University Press 2nd edition (2010) 
	
	\bibitem[HT]{HT} D. Huybrechts and R. Thomas, {\em Deformation-obstruction theory for complexes via Atiyah and Kodaira-Spencer classes}, Math. Ann. {\bf 346} (2010),18-22 \href{https://arxiv.org/abs/0805.3527}{arXiv: 0805.3527} 
	
	\bibitem[I]{I} L. Illusie, {\em Complexe cotangent; application à la  théorie des déformations}, Publ. Math. D'Orsay {\bf 23} (1971) 160-172
	
	
	\bibitem[K]{K} S. Kobayashi, {\em Differential geometry of complex bundles}, Princeton University Press (1987), 149-152
	
	\bibitem[KM]{KM} F. Knudsen, D. Mumford, {\em The projectivity of the moduli space of stable curves I: Preliminaries on $\det$ and $\mathrm{div}$}, Math. Scand. {\bf 39} (1967), 19-55
	
	\bibitem[LT]{LT} J. Li and G. Tian, {\em Virtual moduli cycles and Gromov–Witten invariants of algebraic varieties}, J. Amer. Math. \textbf{11} (1998), 119-174
	
	\bibitem[R]{R} A. Ricolfi, {\em The equivariant Atiyah class} (2020) \href{https://arxiv.org/abs/2003.05440v1}{arXiv: 2003.05440 }
	
	\bibitem[STV]{STV} T. Schürg, B. Toën and G. Vezzosi, { \em Derived algebraic geometry, determinants of perfect complexes, and applications to obstruction theories for maps and complexes}, J. Reine Angew. Math. {\bf 702} (2015) 1-40
	
	\bibitem[TT]{TT} Y. Tanaka and R. Thomas, {\em Vafa-Witten invariants for projective surfaces I}, J. Alg. Geom. {\bf 29} (2020), 1-61 \href{https://arxiv.org/abs/1702.08487}{arXiv: 1702.08487}
	
	\bibitem[VW]{VW} C. Vafa and E. Witten, {\em A strong coupling test of S- duality}, Nucl. Phys. B \textbf{432} (1994), 484-550  
	
	
	
\end{thebibliography}
\end{document}